\theoremstyle{plain}
\newtheorem{theorem}{Theorem}[section]
\newtheorem{conjecture}[theorem]{Conjecture}
\newtheorem{corollary}[theorem]{Corollary}
\newtheorem{lemma}[theorem]{Lemma}
\newtheorem{problem}[theorem]{Problem}
\newtheorem{proposition}[theorem]{Proposition}
\newtheorem{remark}[theorem]{Remark}
\numberwithin{equation}{section}
\newcommand{\aut}[1]{\mathrm{Aut}(#1)}
\newcommand{\atp}[1]{\mathrm{Atp}(#1)}
\newcommand{\rmlt}[1]{\mathrm{Mlt}_r(#1)}
\newcommand{\rinn}[1]{\mathrm{Inn}_r(#1)}
\newcommand{\hol}[1]{\mathrm{Hol}(#1)}
\newcommand{\lnuc}[1]{\mathrm{Nuc}_\ell(#1)}
\newcommand{\rnuc}[1]{\mathrm{Nuc}_r(#1)}
\newcommand{\mnuc}[1]{\mathrm{Nuc}_m(#1)}
\renewcommand{\ker}[1]{\mathrm{Ker}(#1)}
\newcommand{\setof}[2]{\{#1\mid #2 \}}
\newcommand{\genof}[2]{\langle{#1}\mid {#2}\rangle}
\newcommand{\F}{\mathbb{F}}
\newcommand{\Z}{\mathbb{Z}}
\begin{document}

\title{Bol loops and Bruck loops of order $pq$}

\author[M.~K.~Kinyon]{Michael K.~Kinyon}

\address[Kinyon, Vojt\v{e}chovsk\'y]{Department of Mathematics, University of Denver, 2280 S Vine St, Denver, CO 80208, USA}

\author[G.~P.~Nagy]{G\'abor P.~Nagy}

\address[Nagy]{Bolyai Institute, University of Szeged, Aradi v\'{e}rtan\'{u}k tere 1, H-6720 Sze\-ged, Hungary}

\address[Nagy]{MTA-ELTE Geometric and Algebraic Combinatorics Research Group, P\'azm\'any P. s\'et\'any 1/c, H-1117 Budapest, Hungary}

\author[P.~Vojt\v{e}chovsk\'{y}]{Petr Vojt\v{e}chovsk\'{y}}

\email[Kinyon]{mkinyon@du.edu}

\email[Nagy]{nagyg@math.u-szeged.hu}

\email[Vojt\v{e}chovsk\'{y}]{petr@math.du.edu}

\begin{abstract}
Right Bol loops are loops satisfying the identity $((zx)y)x = z((xy)x)$, and right Bruck loops are right Bol loops satisfying the identity $(xy)^{-1} = x^{-1}y^{-1}$. Let $p$ and $q$ be odd primes such that $p>q$. Advancing the research program of Niederreiter and Robinson from $1981$, we classify right Bol loops of order $pq$. When $q$ does not divide $p^2-1$, the only right Bol loop of order $pq$ is the cyclic group of order $pq$. When $q$ divides $p^2-1$, there are precisely $(p-q+4)/2$ right Bol loops of order $pq$ up to isomorphism, including a unique nonassociative right Bruck loop $B_{p,q}$ of order $pq$.

Let $Q$ be a nonassociative right Bol loop of order $pq$. We prove that the right nucleus of $Q$ is trivial, the left nucleus of $Q$ is normal and is equal to the unique subloop of order $p$ in $Q$, and the right multiplication group of $Q$ has order $p^2q$ or $p^3q$. When $Q=B_{p,q}$, the right multiplication group of $Q$ is isomorphic to the semidirect product of $\Z_p\times \Z_p$ with $\Z_q$. Finally, we offer computational results as to the number of right Bol loops of order $pq$ up to isotopy.
\end{abstract}

\thanks{Michael K.~Kinyon partially supported by Simons Foundation Collaboration Grant 359872 and FCT project CEMAT-CI\^{E}NCIAS UID/Multi/04621/2013. G\'abor P. Nagy's travel partially supported by Simons Foundation Collaboration Grant 210176 to Petr Vojt\v{e}chovsk\'y, and by the Humboldt return fellowship of A. Mar\'oti.}

\keywords{Bol loop, Bruck loop, K-loop, Bol loop of order $pq$, Bruck loop of order $pq$, multiplication group, nucleus, dihedral group, twisted subgroup, eigenvalue of a circulant matrix, quadratic field extension}

\subjclass[2010]{Primary: 20N05; Secondary: 12F05, 15B05, 15B33, 20D20.}

\maketitle

\section{Introduction}

Throughout the paper let $p$, $q$ be odd primes such that $p>q$.

Groups of order $pq$ are very well understood. By the Sylow theorems, any group $G$ of order $p^a q$ possesses a unique normal subgroup $P$ of order $p^a$, and is a semidirect product of $P$ with the cyclic group $\Z_q$. When $q$ does not divide $p^a - 1$ then $G$ also possesses a normal subgroup of order $q$ and $G\cong P\times \Z_q$. When $G$ has order $pq$ then either $G\cong \mathbb Z_{pq}$ or $q$ divides $p-1$ and $G$ is the unique nonabelian group of order $pq$ (cf. \cite[Section 4.4]{Hall_1959}).

In this paper we classify right Bol loops and right Bruck loops of order $pq$ up to isomorphism, generalizing the above result for groups. For the convenience of the reader, we summarize our main results in the following theorem:

\begin{theorem}\label{Th:Main}
Let $p>q$ be odd primes.
\begin{enumerate}
\item[(i)] A nonassociative right Bol loop of order $pq$ exists if and only if $q$ divides $p^2-1$.
\item[(ii)] If $q$ divides $p^2-1$, there exists a unique nonassociative right Bruck loop $B_{p,q}$ of order $pq$ up to isomorphism, and there are precisely $(p-q+4)/2$ right Bol loops of order $pq$ up to isomorphism.
\item[(iii)] If $q$ divides $p^2-1$, the $(p-q+4)/2$ right Bol loops of order $pq$ can be constructed on $\F_q\times \F_p$ with multiplication
    \[
        (i,\,j)(k,\,\ell) = (i+k,\, \ell(1+\theta_k)^{-1} + (j+\ell(1+\theta_k)^{-1})\theta_i^{-1}\theta_{i+k}),
    \]
    where $\theta_0$, $\dots$, $\theta_{q-1}\in\F_p$ are chosen as follows.

    Fix a non-square $t$ of $\F_p$, write $\F_{p^2} = \setof{u+v\sqrt{t}}{u,\,v\in\F_p}$, and let $\omega\in\F_{p^2}$ be a  primitive $q$th root of unity.  Let
    \[
        \Gamma = \left\{\begin{array}{ll}
             \setof{\gamma\in \F_p}{1\le \gamma\le (p+1)/2,\,1-\gamma^{-1}\not\in\langle\omega\rangle},&\text{if $q$ divides $p-1$},\\
             \setof{\gamma = 1/2+m\sqrt{t}}{0\le m\le (p-1)/2,\,1-\gamma^{-1}\not\in\langle\omega\rangle},&\text{if $q$ divides $p+1$},
        \end{array}\right.
    \]
    be a set of cardinality $(p-q+2)/2$. Then either let $\theta_i=1$ for every $i\in \F_q$, or choose $\gamma\in\Gamma$ and let $\theta_i = (\gamma\omega^i+(1-\gamma)\omega^{-i})^{-1}\in \F_p$ for every $i\in \F_q$.

    The choice $\theta_i=1$ for all $i$ results in the cyclic group of order $pq$. The choice $\gamma=1/2=(p+1)/2$ results in the nonassociative right Bruck loop $B_{p,q}$. If $q$ divides $p-1$, the choice $\gamma=1$ results in the nonabelian group of order $pq$.
\item[(iv)] Let $Q$ be a nonassociative right Bol loop of order $pq$. Then $Q$ contains a unique subloop of order $p$ and this subloop is normal and equal to the left nucleus of $Q$. The right nucleus and the middle nucleus of $Q$ are trivial. The right multiplication group of $Q$ has order $p^2q$ or $p^3q$.
\item[(v)] The right multiplication group of $B_{p,q}$ is isomorphic to $(\Z_p\times \Z_p)\rtimes \Z_q$.
\end{enumerate}
\end{theorem}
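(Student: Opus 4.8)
The plan is to realize $\rmlt{B_{p,q}}$ concretely as permutations of $\F_q\times\F_p$ and exhibit it as a split extension. For $B_{p,q}$ we have $\gamma=1/2$, so $\theta_i^{-1}=\lambda_i:=\tfrac12(\omega^i+\omega^{-i})\in\F_p$ and $\lambda_0=1$. Writing $c=\ell(1+\theta_k)^{-1}$, the right translation $R_{(k,\ell)}$ reads $(i,j)\mapsto(i+k,\;\lambda_i\lambda_{i+k}^{-1}j+c(1+\lambda_i\lambda_{i+k}^{-1}))$; that is, it shifts the first coordinate by $k$ and acts on the fibre over $i$ by the affine map with multiplicative part $\lambda_i\lambda_{i+k}^{-1}$. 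First I would record the two basic families $S_k:=R_{(k,0)}\colon(i,j)\mapsto(i+k,\lambda_i\lambda_{i+k}^{-1}j)$ and $T_\ell:=R_{(0,\ell)}\colon(i,j)\mapsto(i,j+\ell)$ (using $\theta_0=1$), and observe the telescoping identity $S_k=S_1^{\,k}$. The projection $\pi\colon\rmlt{B_{p,q}}\to\mathrm{Sym}(\F_q)$ onto the action on the first coordinate has image the cyclic shift group $\langle\pi(S_1)\rangle\cong\Z_q$, and $S_1^{\,q}=\mathrm{id}$ because $\prod_{s=0}^{q-1}\lambda_{i+s}\lambda_{i+s+1}^{-1}$ telescopes to $1$; thus $\langle S_1\rangle\cong\Z_q$ is a complement to $K:=\ker\pi$.

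The kernel $K$ is the heart of the matter. Any element of $K$ is a product of right translations whose shifts $k_1,\dots,k_n$ sum to $0$ in $\Z_q$, so on the fibre over $i$ its multiplicative part telescopes to $\lambda_i\lambda_{i+k_1+\cdots+k_n}^{-1}=\lambda_i\lambda_i^{-1}=1$. Hence every element of $K$ is a pure fibrewise translation $T_g\colon(i,j)\mapsto(i,j+g(i))$ for some $g\in\F_p^{\F_q}$, so $K$ is elementary abelian and $\rmlt{B_{p,q}}=K\rtimes\langle S_1\rangle$. It remains to identify $K$ as an $\F_p$-subspace of $\F_p^{\F_q}$ and to compute the conjugation action of $S_1$ on it.

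The crux is to pin down $K$ exactly. A direct computation gives that conjugation by $S_1$ sends $T_g$ to $T_{g'}$ with $g'(i)=\lambda_{i+1}\lambda_i^{-1}\,g(i+1)$, and the substitution $\tilde g(i):=\lambda_i g(i)$ linearizes this into the plain cyclic shift $\tilde g\mapsto\tilde g(\,\cdot+1)$. Since $\rmlt{B_{p,q}}=\langle S_1,K\rangle$ and $K$ is generated as a $\langle S_1\rangle$-module by the translation parts of the generators $R_{(k,\ell)}=T_{g_{k,\ell}}S_1^{\,k}$, I would compute $\tilde g_{k,\ell}$ and, using the sum-to-product identity $\lambda_m+\lambda_{m-k}=2\lambda_{k/2}\lambda_{m-k/2}$ (valid since $q$ is odd, so $2$ is invertible mod $q$), show that each $\tilde g_{k,\ell}$ is a scalar multiple of the shift $m\mapsto\lambda_{m-k/2}$ of the single function $\lambda$. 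Thus, after the substitution, $\widetilde K$ is the shift-invariant subspace of $\F_p^{\F_q}$ generated by $\lambda_m=\tfrac12(\omega^m+\omega^{-m})$.

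Finally I would identify this circulant eigenspace. As a function of $m$, $\lambda$ lies in the $\F_p$-rational span of the two shift-eigenfunctions $m\mapsto\omega^{\pm m}$ (with eigenvalues $\omega,\omega^{-1}$), and since $q\ge 3$ forces $\omega\ne\omega^{-1}$, the functions $\lambda$ and its shift $\lambda(\,\cdot+1)$ are independent, so $\widetilde K$ is exactly this $2$-dimensional space. Hence $K\cong\Z_p\times\Z_p$ (consistent with the bound in (iv), and ruling out the value $p^3q$ there), and conjugation by $S_1$ acts on $K$ as the order-$q$ element of $GL_2(\F_p)$ with eigenvalues $\omega,\omega^{-1}$. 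Therefore $\rmlt{B_{p,q}}=K\rtimes\langle S_1\rangle\cong(\Z_p\times\Z_p)\rtimes\Z_q$, with $\Z_q$ acting faithfully. The main obstacle is precisely this clean identification of $K$: the substitution $\tilde g=\lambda g$ and the resulting reduction to a $2$-dimensional circulant eigenspace is what makes the condition $q\mid p^2-1$ re-enter and forces $|K|=p^2$ rather than $p^3$.
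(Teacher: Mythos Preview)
Your argument for part (v) is correct and takes a genuinely different route from the paper. The paper establishes the isomorphism $\rmlt{B_{p,q}}\cong(\Z_p\times\Z_p)\rtimes\Z_q$ abstractly, as part of Proposition \ref{Pr:BruckRmltJ}: working inside $A=\rmlt{Q}\rtimes\langle J\rangle$ for an arbitrary nonassociative right Bruck loop $Q$ of order $pq$, it uses Glauberman's solvability theorem, the twisted-subgroup description $R_Q=K_G(J)$, and a minimal-normal-subgroup argument to force a normal $N\cong\Z_p\times\Z_p$ with $A=N\langle R_a,J\rangle$ and $C_A(N)=N$, whence $G=N\langle R_a\rangle$. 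By contrast, you work entirely inside the concrete model of $B_{p,q}$ given in part (iii), never invoke $J$ or Glauberman, and compute $\rmlt{B_{p,q}}$ directly: the telescoping of the multiplicative parts $\lambda_i\lambda_{i+k}^{-1}$ shows that $\ker\pi$ consists of pure fibrewise translations, the substitution $\tilde g=\lambda g$ conjugates the $S_1$-action to the cyclic shift, and the cosine identity $\lambda_m+\lambda_{m-k}=2\lambda_{k/2}\lambda_{m-k/2}$ places every $\tilde g_{k,\ell}$ in the two-dimensional shift-invariant space spanned by $m\mapsto\omega^{\pm m}$. The paper's approach is logically prior (it is used to prove uniqueness of $B_{p,q}$ before any explicit formula is in hand) and applies uniformly to any nonassociative Bruck loop of order $pq$; your approach is more elementary and self-contained, but it presupposes the explicit formula for $B_{p,q}$ and hence sits downstream of the classification. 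As a bonus, your computation exhibits the $\Z_q$-action on $K$ explicitly as the matrix with eigenvalues $\omega,\omega^{-1}$, which the paper obtains only implicitly through Proposition \ref{Pr:gr_unique}.
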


The nonassociative right Bol loops of order $pq$ will be available in the next release of the \texttt{LOOPS} package \cite{LOOPS} for \texttt{GAP} \cite{GAP}.

\subsection{Related results}

Bol loops were introduced in 1937 by G.~Bol in \cite{Bol}, where he studied the associated $3$-nets. The first systematic algebraic study of Bol loops is due to D.~Robinson \cite{Robinson_thesis,Robinson}, where he showed, among other results, that right Bol loops are right power alternative, and hence that any right Bol loop of prime order is a group.

R.~Burn proved in 1978 \cite{BurnI} that right Bol loops of order $p^2$ and $2p$ are groups, and classified nonassociative right Bol loops of order $8$.
In a seminal 1981 paper \cite{NiederreiterRobinson_pq}, H.~Niederreiter and K.~Robinson established a number of results for right Bol loops of order $pq$ and came close to classifying right Bol loops of order $3p$:

\begin{theorem}[\cite{NiederreiterRobinson_pq}]
Let $p>q$ be odd primes.
\begin{itemize}
\item If $q$ divides $p^2-1$ then there exists a nonassociative right Bruck loop $B_{p,q}$ of order $pq$, and a non-Bruck right Bol loop of order $pq$.
\item A right Bol loop of order $pq$ contains a unique subloop of order $p$, and when $q=3$ then the unique subloop of order $p$ is normal.
\item There are at least $(p+1)/2$ right Bol loops of order $3p$ up to isomorphism, and at least $(p+5)/6$ right Bol loops of order $3p$ up to isotopism.
\end{itemize}
\end{theorem}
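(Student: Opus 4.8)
The plan is to realize right Bol loops of order $pq$ concretely on the set $\F_q\times\F_p$ by a single parametrized multiplication and to reduce the right Bol identity to a manageable constraint on the parameters, as displayed in Theorem~\ref{Th:Main}(iii). First I would posit a product $(i,j)(k,\ell)=(i+k,\,\Phi_{i,k}(j,\ell))$ whose second coordinate is affine in $j$ and $\ell$, with coefficients assembled from a family $\theta_0,\dots,\theta_{q-1}\in\F_p^\times$ indexed by $\F_q$; in such a loop the fibre $\{0\}\times\F_p$ is a copy of $\Z_p$ (note $\theta_0=1$ forces the product to restrict to addition there) and $(i,j)\mapsto i$ is a homomorphism onto $\Z_q$. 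Substituting this ansatz into $((zx)y)x=z((xy)x)$ and comparing coordinates should collapse the right Bol identity to a single functional equation tying $\theta_i$, $\theta_k$ and $\theta_{i+k}$; after the substitution $\phi_i=\theta_i^{-1}$ I expect this to linearize into a cyclic second-order recurrence in the index. This is exactly where the circulant viewpoint enters: the shift on $\F_q$-indexed vectors is a circulant whose eigenvalues are the $q$th roots of unity, so the admissible vectors $(\phi_i)$ are spanned by the eigenvectors $(\omega^i)_i$ and $(\omega^{-i})_i$ for a primitive $q$th root of unity $\omega$, forcing $\phi_i=\gamma\omega^i+(1-\gamma)\omega^{-i}$.

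For the existence assertion I would observe that such an $\omega$ lives in $\F_{p^2}$ precisely when $q\mid|\F_{p^2}^\times|=p^2-1$, which is the stated divisibility hypothesis. The remaining issue is to force $\phi_i$, and hence $\theta_i$, into $\F_p$: applying the Frobenius automorphism of $\F_{p^2}/\F_p$ gives the constraint $\gamma^p=1-\gamma$ in the $q\mid p+1$ case (trivially satisfied when $\omega\in\F_p$, i.e. $q\mid p-1$), which is met by the parametrization $\gamma=1/2+m\sqrt t$ of Theorem~\ref{Th:Main}(iii). The symmetric value $\gamma=1/2$ yields $\phi_i=\tfrac{1}{2}(\omega^i+\omega^{-i})\in\F_p$; for this choice I would verify the automorphic inverse property $(xy)^{-1}=x^{-1}y^{-1}$ directly from the formula to obtain the Bruck loop $B_{p,q}$, while a second admissible $\gamma$ produces a non-Bruck right Bol loop. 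Nonassociativity in each case follows from the fact that the loop is associative exactly when the $\theta_i$ are all equal, which fails once $\omega\ne1$.

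For the statement that every right Bol loop $Q$ of order $pq$ has a unique subloop of order $p$ I would argue structurally rather than from coordinates, since at this stage one cannot yet assume the explicit form. Using that right Bol loops satisfy Lagrange's theorem and a Sylow/Cauchy-type existence of subloops of prime order (via Sylow theory applied inside $\rmlt{Q}$), a subloop $P$ of order $p$ exists; uniqueness should follow by showing any element of order $p$ is trapped in $P$ through its image in a $\Z_q$-quotient together with right power-alternativity controlling its powers. For $q=3$ the normality of $P$ I expect from a short counting/transfer argument: with index $3$ there is little room for distinct conjugates of $P$ under the inner mappings, and the already-established uniqueness of the order-$p$ subloop forces $P$ to be invariant.

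Finally, for the order-$3p$ count I would specialize $q=3$ and enumerate the admissible $\gamma$ modulo loop isomorphism. The involution $\gamma\mapsto1-\gamma$, which merely swaps $\omega\leftrightarrow\omega^{-1}$, identifies the parameters in pairs and should leave at least $(p+1)/2$ isomorphism classes; passing to isotopy introduces further identifications via principal isotopes that fold the count down to at least $(p+5)/6$. I anticipate two main obstacles. The first is genuinely solving the Bol functional equation and recognizing its solution space as the $\omega$-parametrized family---rather than merely checking a guessed formula---together with the delicate reality condition placing $\theta_i$ in $\F_p$. The second is deciding precisely which distinct $\gamma$ give isomorphic (or isotopic) loops, since a loop isomorphism need not respect the chosen coordinates; ruling such coincidences in or out requires isomorphism invariants, for which the isomorphism type of $\rmlt{Q}$ and the action on $P$ are the natural tools.
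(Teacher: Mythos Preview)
This theorem is quoted from Niederreiter--Robinson as background; the present paper does not give it a self-contained proof but rather establishes strictly stronger statements (Theorem~\ref{Th:Main}) scattered through Sections~\ref{Sc:First}--6. Your construction and the circulant/eigenvector analysis for the existence claim and for the lower bound $(p+1)/2$ are essentially the paper's route (Theorem~\ref{Th:NiRoBig}, Lemmas~\ref{Lm:Eigenvalues} and~\ref{Lm:Mirror}), so those parts are fine.

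There is, however, a genuine gap in your argument for the second bullet. To prove uniqueness of the order-$p$ subloop you propose to trap an element of order $p$ ``through its image in a $\Z_q$-quotient''; but a $\Z_q$-quotient exists only once you know the order-$p$ subloop is \emph{normal}, which is the very next claim you are trying to establish (and only for $q=3$ at that). The paper's argument (Lemma~\ref{Lm:RightPA}, Corollary~\ref{Cr:NiRo}) avoids this circularity entirely: if $\langle u\rangle$ and $\langle v\rangle$ are distinct subloops of order $p$, right power alternativity forces every orbit of $R_u$ to have length $p$, while the $p$ elements $v^k$ lie in pairwise distinct $R_u$-orbits (since $v^kR_u^i=v^j$ would give $u^i\in\langle v\rangle$), yielding $|Q|\ge p^2>pq$. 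Existence of an element of order $p$ comes from Glauberman's Cauchy-type theorem for the associated Bruck loop, not from Sylow theory in $\rmlt{Q}$ as you suggest. Your ``short counting/transfer'' sketch for normality at $q=3$ is also too vague to stand; the paper instead derives the multiplication formula \eqref{Eq:Mult} \emph{without} assuming normality (via Lemma~\ref{Lm:Tb} and Proposition~\ref{Pr:MultiplicationFormula}) and then reads off the homomorphism $(i,j)\mapsto i$.

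One further slip: you assert the loop is ``associative exactly when the $\theta_i$ are all equal''. This is false---for $q\mid p-1$ the choice $\gamma=1$ gives $\theta_i=\omega^{-i}$, non-constant, yet produces the nonabelian group of order $pq$. The correct criterion (Corollary~\ref{Cr:LinearAssociative}) is $\theta_{i+k}=\theta_i\theta_k$, so your nonassociativity check for the constructed loops needs repair.
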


They also showed that any right Bol loop of order $pq$ can be constructed from an ensemble of $q$ complete mappings on $\Z_p$. They were not able to establish that the unique subloop of order $p$ is normal even when $q>3$, nor that the complete mappings must be linear in order to yield a right Bol loop. Nevertheless, they obtained additional results (see below) under the \emph{assumption} that the subloop of order $p$ is normal and that the complete mappings are linear.

B.~Sharma and A.~Solarin came up with a conflicting estimate on the number of right Bol loops of order $3p$ \cite{SharmaSolarin_3p} but a problem with their proof was pointed out in \cite{NiederreiterRobinson_3p}. B.~Sharma also attempted to prove that the subloop of order $p$ in a right Bol loop of order $pq$ is normal \cite{Sharma_normal}, and that a right Bol loop of order $pq$ must be associative when $q$ does not divide $p^2-1$ \cite{Sharma_pq}. Both of these results turn out to be true---as we shall see---but the proofs in \cite{Sharma_normal,Sharma_pq} are incorrect (there are counterexamples to some intermediate claims made in the proofs).

R.~Burn went on to prove that there exist nonassociative right Bol loops of order $4n$ \cite{BurnII}. Moreover, for every odd prime $p$, there are precisely two nonassociative right Bol loops of order $2p^2$ up to isomorphism. (The history of this result is also convoluted: R.~Burn claimed in \cite{BurnIII} that there is a unique nonassociative right Bol loop of order $2p^2$, B.~Sharma constructed two examples of order $18$ \cite{Sharma_18}, R.~Burn accounted for the second class of examples in a correction to \cite{BurnIII}, and B.~Sharma and A.~Solarin gave an independent proof in \cite{SharmaSolarin_2p2}.)

We note that for Moufang loops, which are properly contained between groups and right Bol loops, the Moufang theorem \cite{Moufang} guarantees that every Moufang loop of order $pq$ is a group.

\medskip

From a more general perspective, Bol loops find applications in differential geometry (see the monograph \cite{Sabinin} and the references therein). Bruck loops appear naturally in the special theory of relativity. A.~Ungar showed in \cite{Ungar} that Einsten's relativistic addition of vectors gives rise to a nonassociative Bruck loop.

L.~Paige discovered an infinite family of nonassociative finite simple Moufang loops \cite{Paige}, and M.~Liebeck proved that no other nonassociative finite simple Moufang loops exist \cite{Liebeck}. Although examples of finite simple non-Moufang Bol loops were not easy to find, they abound \cite{Nagy_simple} even in the more restrictive case of Bruck loops \cite{BaumeisterStein, Nagy_simple2}. There are finite simple Bol loops of odd order $p^aq^b$ \cite{Nagy_simple}, for instance.

\subsection{Outline of the paper}

There are three main techniques present in this paper.

Standard loop-theoretical arguments (including strong results of Glauberman on right Bruck loops) provide necessary conditions on local properties of right Bol loops of order $pq$, such as the multiplication formula \eqref{Eq:Mult}, but also a proof that the unique subloop of order $p$ is normal and that the right and middle nuclei are trivial.

Group-theoretical arguments (mostly about groups of order $2p^aq^b$) shed light on the structure of the right multiplication groups of right Bol loops of order $pq$ and completely settle the structure in the case of right Bruck loops. As a consequence we deduce the global condition that nonassociative right Bol loops $Q$ of order $pq$ exist only if $q$ divides $p^2-1$, and also the fact that the left nucleus of $Q$ is of order $p$. From this it then follows easily that the complete mappings of \eqref{Eq:Mult} must be linear, a key step.

In the linear (and hence general) case, the isomorphism problem reduces to a solution of the bi-infinite recurrence relation \eqref{Eq:Recurrence} over $\F_p$ with period $q$, and to a classification of the solutions modulo the equivalence \eqref{Eq:Equivalence}. This was shown already in \cite{NiederreiterRobinson_pq}. We solve the recurrence relation by solving the eigenvalue/eigenvector problem for the associated circulant matrix over $\F_{p^2}$, mimicking the standard approach to circulant matrices over complex numbers. The difficulty lies in identifying the solutions with all entries in $\F_p\setminus\{0,-1\}$ (rather than in $\F_{p^2}$). The equivalence classes of solutions are then described and counted by elementary calculations in $\F_{p^2}$.

One of our goals was to present the various topics with approximately the same level of detail, so that the paper can be read by researchers who are not experts in all three areas (loop theory, group theory, finite fields).

\section{Preliminaries on Bol loops and Bruck loops}

We apply maps to the right of their arguments, and we conjugate by $u^v = v^{-1}uv$. In nonassociative situations, we use the dot convention to indicate the order of multiplications. For instance, $uv\cdot w$ stands for $(uv)w$.

\subsection{Basic properties of Bol loops and Bruck loops}

A nonempty set $Q$ with a binary operation $\cdot$ is a $\emph{loop}$ if all left translations and all right translations
\[
    L_u:Q\to Q,\ vL_u = uv,\quad\quad R_u:Q\to Q,\ vR_u = vu
\]
are bijections of $Q$, and if there is an identity element $1\in Q$ satisfying $1u=u1=u$ for every $u\in Q$. In a loop, we write $u\backslash v = vL_u^{-1}$ and $u/v = vR_u^{-1}$ for the left and right divisions, respectively.

For a loop $Q$, the \emph{right multiplication group} of $Q$ is the group
\[
    \rmlt{Q} = \genof{R_u}{u\in Q},
\]
and the \emph{right inner mapping group} of $Q$ is defined by
\[
    \rinn{Q} = \setof{\varphi\in\rmlt{Q}}{1\varphi=1}.
\]
The \emph{right section} of $Q$ is the set
\[
    R_Q=\setof{R_u}{u\in Q}.
\]
The right section $R_Q$ is a transversal (both left and right) to $\rinn{Q}$ in $\rmlt{Q}$, cf. \cite{Bruck}. In particular, every $\varphi\in\rmlt{Q}$ can be written uniquely as $\varphi = \psi R_u$, where $\psi\in\rinn{Q}$ and $u\in Q$.

A subgroup $H$ of a group $G$ is \emph{core-free} in $G$ if $H$ contains no nontrivial subgroups normal in $G$. It is well known, cf. \cite{Bruck}, that for any loop $Q$, $\rinn{Q}$ is a core-free subgroup of $\rmlt{Q}$.

For a loop $Q$ define the \emph{left nucleus}, \emph{middle nucleus} and \emph{right nucleus} by
\begin{align*}
    \lnuc{Q} &= \setof{u\in Q}{u(vw)=(uv)w\text{ for all }v,\,w\in Q},\\
    \mnuc{Q} &= \setof{v\in Q}{u(vw)=(uv)w\text{ for all }u,\,w\in Q},\\
    \rnuc{Q} &= \setof{w\in Q}{u(vw)=(uv)w\text{ for all }u,\,v\in Q},
\end{align*}
respectively. Each of the three nuclei is a subloop of $Q$, not necessarily normal in $Q$. Note that $\lnuc{Q} = \setof{u\in Q}{u\varphi = u\text{ for every }\varphi\in\rinn{Q}}$, and dually for the right nucleus.

A loop $Q$ is \emph{right Bol} if it satisfies the \emph{right Bol identity}
\begin{equation}\label{Eq:RBol}
    ((wu)v)u = w((uv)u).\tag{Bol${}_r$}
\end{equation}
The identity \eqref{Eq:RBol} can be restated as an identity for right translations, namely
\[
    R_uR_vR_u = R_{(uv)u}.
\]

It is well known, cf. \cite{Robinson_thesis}, that every right Bol loop $Q$ is \emph{power associative} (that is, every element generates an associative subloop), has the \emph{right inverse property} (that is, $uv\cdot v^{-1}=u$ for every $u$, $v\in Q$) and, more generally, is \emph{right power alternative} (that is, $R_u^i = R_{u^i}$ holds for every $u\in Q$ and $i\in\mathbb Z$). Consequently, $|R_u|=|u|$ for every $u\in Q$, and if $|Q|$ is finite then $|u|$ divides $|Q|$.

The right nucleus coincides with the middle nucleus in every right inverse property loop \cite{Bruck}. In a right Bol loop $Q$, the right nucleus is normal in $Q$ \cite[Lemma 2.1]{Nagy_invariants}.

Finally, in a right Bol loop, the left division can be expressed in terms of the multiplication and inverses by
\begin{equation}\label{Eq:LeftDivision}
    u\backslash v = (u^{-1}\cdot vu)u^{-1},
\end{equation}
cf. \cite[Lemma 2]{GlaubermanI}. Consequently, a nonempty subset of a right Bol loop is a subloop if it is closed under multiplication and inverses.

Let $Q$ be a loop with inverses, and let
\[
    J:Q\to Q,\quad u\mapsto u^{-1}
\]
be the inversion map. Since $u^{-1}\varphi = (u\varphi)^{-1}$ for every $u\in Q$ and $\varphi\in\aut{Q}$, we have $\varphi^J = \varphi$ for every $\varphi\in\aut{Q}$.

A loop $Q$ with inverses has the \emph{automorphic inverse property} if it satisfies the identity
\begin{equation}\label{Eq:AIP}
    (uv)^{-1} = u^{-1}v^{-1},\tag{AIP}
\end{equation}
or, equivalently, if $R_v^J= R_{v^{-1}}$ holds for every $v\in Q$.

A loop is \emph{right Bruck} if it satisfies \eqref{Eq:RBol} and \eqref{Eq:AIP}. Therefore, in a right Bruck loop we have $R_v^J = R_{v^{-1}} = R_v^{-1}$.

Let $Q$ be a loop and $\varphi\in\aut{Q}$. Then we have $(v\varphi^{-1}\cdot u)\varphi = v\cdot u\varphi$ for every $u$, $v\in Q$, and thus $R_u^\varphi = R_{u\varphi}$ for every $u\in Q$.

A loop $Q$ is \emph{right automorphic} if $\rinn{Q}\le\aut{Q}$. It is well known that right Bruck loops are right automorphic.

\subsection{Isotopes and conjugation in right Bol loops}

Two loops $(Q_1,\cdot)$, $(Q_2,*)$ are \emph{isotopic} if there exist bijections $f$, $g$, $h:Q_1\to Q_2$ such that $f(u)*g(v)=h(u\cdot v)$ for every $u$, $v\in Q_1$. We then say that $(Q_2,*)$ is a \emph{loop isotope} of $(Q_1,\cdot)$.

\begin{lemma}\label{Lm:OrderLu}
Let $Q$ be a right Bol loop. Then every loop isotope of $Q$ is isomorphic to a loop isotope of the form $(Q,\circ_c)$, where
\begin{equation}\label{Eq:Isotope}
    u\circ_c v = (u\cdot vc)c^{-1}.
\end{equation}
Moreover, for all $u\in Q$, the order of $u$ in $(Q,\circ_c)$ is the length of the orbit of $L_u$ through $c$.
\end{lemma}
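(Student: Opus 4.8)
The plan is to prove the two assertions in turn, reducing an arbitrary isotope to the form \eqref{Eq:Isotope} first and then establishing the order formula. For the first assertion I would start from Albert's classical observation that every loop isotope of $(Q,\cdot)$ is isomorphic to a \emph{principal} isotope $(Q,\bullet)$ with $u\bullet v=(u/a)\cdot(b\backslash v)$ for some $a,b\in Q$, whose identity element is then $ba$. Since $Q$ is right power alternative we have $R_a^{-1}=R_{a^{-1}}$, hence $u/a=ua^{-1}$ and the operation becomes $u\bullet v=(ua^{-1})\cdot(b\backslash v)$. Next I would isolate the clean sub-fact that $R_c$ is an isomorphism from $(Q,\circ_c)$ onto the principal isotope with \emph{trivial left part}, namely $(Q,\star)$ with $u\star v=(uc^{-1})\cdot v$: using the right inverse property, $(u\circ_c v)R_c=u\cdot vc=(uc)\star(vc)=(uR_c)\star(vR_c)$. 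In this way the first assertion reduces to the single statement that every principal isotope $(Q,\bullet)$ is isomorphic to one of the trivial‑left‑part form $u\star v=(uc^{-1})\cdot v$ for a suitable $c$.

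This reduction is where I expect the main obstacle to lie, and it is precisely where the right Bol structure must be used. Rewriting the left division by \eqref{Eq:LeftDivision}, $b\backslash v=(b^{-1}\cdot vb)b^{-1}$, turns $u\bullet v$ into an expression built entirely from multiplication, right translations, and inverses. I would then post-compose with an isomorphism built from translations, chosen so that the identity $ba$ is sent to $1$, and invoke the right Bol identity \eqref{Eq:RBol} in the operator form $R_xR_yR_x=R_{(xy)x}$ (equivalently, the fact that the right section $R_Q$ is closed under inversion and under $x,y\mapsto xyx$) to collapse the two separate translations appearing in $u\bullet v$ into a single right translation, thereby landing in the form $u\star v=(uc^{-1})\cdot v$ and pinning down $c$. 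The difficulty is bookkeeping: the identity element $ba$ and the genuinely two-sided character of $\bullet$ have to be reconciled, and only the right Bol identity (accessed through \eqref{Eq:LeftDivision}) makes the collapse of the left factor possible.

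The order formula I would prove directly. From \eqref{Eq:Isotope} and $R_c^{-1}=R_{c^{-1}}$, the left translations of $(Q,\circ_c)$ are conjugates of those of $Q$:
\[
    vL_u^{\circ_c}=u\circ_c v=(u\cdot vc)c^{-1}=vR_cL_uR_c^{-1},\qquad\text{so}\qquad L_u^{\circ_c}=R_cL_uR_c^{-1}.
\]
Moreover $(Q,\circ_c)$ is again a right Bol loop: since $vR_u^{\circ_c}=v\circ_c u=vR_{uc}R_{c^{-1}}$, its right section equals $R_QR_{c^{-1}}$, a right translate of the twisted subgroup $R_Q$ by its own element $R_{c^{-1}}$, hence again a twisted subgroup. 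In particular $(Q,\circ_c)$ is power associative, so the order of $u$ is the least $n\ge 1$ with $u^{(n)}=1$, where the powers may be computed by iterating the left translation, $u^{(n)}=1(L_u^{\circ_c})^n$. Substituting $L_u^{\circ_c}=R_cL_uR_c^{-1}$ gives $u^{(n)}=1R_cL_u^nR_c^{-1}=(cL_u^n)c^{-1}$, which equals $1$ if and only if $cL_u^n=c$. Therefore the order of $u$ in $(Q,\circ_c)$ is the least $n\ge 1$ with $c(L_u)^n=c$, that is, the length of the orbit of $L_u$ through $c$, as claimed.
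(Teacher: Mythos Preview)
Your treatment of the order formula is correct and is exactly what the paper does: the paper also computes $1(L_u^{\circ_c})^n=(cL_u^n)c^{-1}$ (you make the power associativity of $(Q,\circ_c)$ explicit, which the paper leaves implicit since isotopes of right Bol loops are right Bol).

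For the first assertion the paper takes a much shorter route than the one you sketch. Instead of starting from Albert's two-parameter principal isotope $u\bullet v=(u/a)\cdot(b\backslash v)$ and trying to collapse the two parameters, the paper invokes Robinson's result \cite[Lemma 3.4]{Robinson} that in a right Bol loop every isotope is already isomorphic to a \emph{one}-parameter isotope $u\ast_c v=uc\cdot(c\backslash v)$. From there a two-line calculation finishes: substitute $c\backslash v=(c^{-1}\cdot vc)c^{-1}$ from \eqref{Eq:LeftDivision}, apply the right Bol identity $W((XY)X)=((WX)Y)X$ with $W=uc$, $X=c^{-1}$, $Y=vc$, and use the right inverse property to get $uc\cdot c^{-1}=u$; this yields $u\ast_c v=(u\cdot vc)c^{-1}=u\circ_c v$ directly. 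Your detour through the auxiliary operation $u\star v=(uc^{-1})\cdot v$ and the isomorphism $R_c$ is correct, but the step you flag as ``the main obstacle'' --- reducing the general two-parameter principal isotope to the one-parameter form --- is precisely Robinson's lemma, and you leave it as a sketch rather than a proof. There is no error in your plan, but it re-proves a cited result instead of using it; quoting Robinson and then carrying out the short computation above is both simpler and complete.
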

\begin{proof}
By \cite[Lemma 3.4]{Robinson}, every loop isotope of $Q$ is isomorphic to a loop isotope $(Q,\ast_c)$, where
\[
    u\ast_c v = uc\cdot (c\backslash v).
\]
By \eqref{Eq:LeftDivision}, $u\ast_c v = uc\cdot (c^{-1}\cdot vc)c^{-1} = [(uc\cdot c^{-1})\cdot vc]c^{-1} = [u\cdot vc]c^{-1} = u\circ_c v$, where we have also used \eqref{Eq:RBol} and the right inverse property.

By an easy induction, the $n$th power of $u$ in $(Q,\circ_c)$ is $1(L_u^{\circ})^n = (cL_u^n)c^{-1}$, where $L_u^\circ$ denotes the left translation by $u$ in $(Q,\circ_c)$.
\end{proof}

\begin{lemma}\label{Lm:RL}
Let $Q$ be a right Bol loop. For all $u$, $v\in Q$ and all positive integers $m$,
\[
    R_{uv} ( R_v^{-1} R_{uv} )^{m-1} = R_{vL_u^m}.
\]
\end{lemma}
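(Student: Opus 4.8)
The plan is to recast the claim as a one-step recurrence and prove it by induction on $m$. Writing $\sigma = R_v^{-1}R_{uv}$, the left-hand side is $R_{uv}\sigma^{m-1}$, so the statement telescopes once I establish, for every $m\ge 0$, the identity
\[
    R_{vL_u^m}\,R_v^{-1}R_{uv} = R_{vL_u^{m+1}}.
\]
Granting this, $R_{uv}\sigma^{m-1} = R_{uv}\sigma^{m-2}\cdot\sigma = R_{vL_u^{m-1}}\,R_v^{-1}R_{uv} = R_{vL_u^{m}}$ follows by a routine secondary induction, whose base case $m=1$ reads $R_{uv} = R_{vL_u}$ and holds because $vL_u = uv$.

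The real work is the displayed one-step identity, which I would prove by induction on $m$. The base case $m=0$ is immediate, since $R_v\,R_v^{-1}R_{uv} = R_{uv} = R_{vL_u}$. For the inductive step I set $b = vL_u^m$, so that $vL_u^{m+1} = ub$ and $vL_u^{m+2} = u(ub)$. The induction hypothesis $R_b\,R_v^{-1}R_{uv} = R_{ub}$, solved for the middle factor, yields the crucial rewriting
\[
    R_v^{-1}R_{uv} = R_b^{-1}R_{ub}.
\]
Substituting this into the target for $m+1$ converts it into a palindromic Bol expression,
\[
    R_{vL_u^{m+1}}\,R_v^{-1}R_{uv} = R_{ub}\,R_b^{-1}R_{ub} = R_{ub}\,R_{b^{-1}}R_{ub},
\]
where I use that right Bol loops have the right inverse property, so $R_{b^{-1}} = R_b^{-1}$. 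Now \eqref{Eq:RBol}, in the form $R_aR_cR_a = R_{(ac)a}$, applies with $a = ub$ and $c = b^{-1}$, giving $R_{ub}R_{b^{-1}}R_{ub} = R_{((ub)b^{-1})(ub)}$; and the right inverse property simplifies $(ub)b^{-1} = u$, so the expression equals $R_{u(ub)} = R_{vL_u^{m+2}}$, completing the step.

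The main obstacle is exactly this rewriting step. The product $R_{vL_u^m}R_v^{-1}R_{uv}$ is not of the form $R_aR_cR_a$ to which \eqref{Eq:RBol} applies, and a naive attempt to simplify it fails: already over a group it would force $v^{-1}uv = u$, which is false in the nonabelian case. The induction hypothesis is precisely what lets me replace the ``wrong'' conjugating factor $R_v^{-1}R_{uv}$ by the ``right'' one $R_b^{-1}R_{ub}$ assembled from $b = vL_u^m$ itself, after which the right inverse property collapses $(ub)b^{-1}$ to $u$ and \eqref{Eq:RBol} closes the computation. I expect no further difficulties, the remainder being bookkeeping with the convention that maps act on the right.
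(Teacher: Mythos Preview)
Your proof is correct. Both your argument and the paper's rest on the same core computation: the Bol identity together with the right inverse property gives $R_{ub}R_{b^{-1}}R_{ub} = R_{u(ub)}$. The paper establishes this as the $m=2$ case (with $b=v$) and then, for the inductive step from $m-1$ to $m$, substitutes $uv$ for $v$ in the full hypothesis and simplifies using that $m=2$ case. You instead isolate the one-step telescoping identity $R_{vL_u^m}R_v^{-1}R_{uv} = R_{vL_u^{m+1}}$ and prove it by induction, the key observation being that the inductive hypothesis lets you rewrite $R_v^{-1}R_{uv}$ as $R_b^{-1}R_{ub}$ with $b=vL_u^m$, aligning the conjugating factor with the current term so that the Bol palindrome applies. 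The two schemes are reorganizations of the same calculation; yours makes the telescoping structure and the invariance of $R_v^{-1}R_{uv}$ along the $L_u$-orbit of $v$ more visible, while the paper's proves the closed form directly and so avoids the separate (if trivial) secondary induction.
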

\begin{proof}
The case $m=1$ is trivial. For $m=2$, we compute
\[
    R_{uv} R_v^{-1} R_{uv} = R_{uv}R_{v^{-1}}R_{uv} = R_{(uv\cdot v^{-1})\cdot uv} = R_{u\cdot uv}.
\]
Now suppose that the result holds up to $m-1>1$ and compute
\begin{align*}
    R_{vL_u^m} &= R_{(uv)L_u^{m-1}} = R_{u\cdot uv} ( R_{uv}^{-1} R_{u\cdot uv} )^{m-2} \\
        &= R_{uv} R_v^{-1} R_{uv} ( R_{uv}^{-1} R_{uv} R_v^{-1} R_{uv} )^{m-2} = R_{uv} ( R_v^{-1} R_{uv} )^{m-1},
\end{align*}
using the induction step (with $uv$ in place of $v$) in the second equality and the case $m=2$ in the third equality.
\end{proof}

Let $T_u=R_uL_u^{-1}$.

\begin{proposition}\label{Pr:T}
Let $Q$ be a right Bol loop. Then for all $u$, $v\in Q$ and for all $k\ge 0$,
\[
    (vT_u)^k = (u^{-1}\cdot (u^2)L_v^k)u^{-1}.
\]
\end{proposition}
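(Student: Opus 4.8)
The plan is to reduce the claim to the single conjugacy relation
\[
    R_u\, R_{(vT_u)^k}\, R_u = R_{(u^2)L_v^k}
\]
between right translations, and then to read off $(vT_u)^k$ from it. I would begin with two elementary observations about $T_u$. Since $vT_u = (vu)L_u^{-1} = u\backslash(vu)$, the definition of left division gives $u\cdot(vT_u) = vu$. Feeding this into the translation form $R_uR_wR_u = R_{(uw)u}$ of \eqref{Eq:RBol} with $w=vT_u$, and using right power alternativity to rewrite $(vu)u = v\cdot u^2$, yields $R_u\,R_{vT_u}\,R_u = R_{vu^2}$.

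The heart of the argument is the displayed conjugacy relation, which I would establish for $k\ge 1$ by applying Lemma~\ref{Lm:RL} with the substitution $u\mapsto v$, $v\mapsto u^2$, $m\mapsto k$, giving
\[
    R_{(u^2)L_v^k} = R_{vu^2}\bigl(R_{u^2}^{-1}R_{vu^2}\bigr)^{k-1}.
\]
Substituting $R_{vu^2}=R_uR_{vT_u}R_u$ from the first step together with $R_{u^2}=R_u^2$ (right power alternativity) collapses the inner factor to the conjugate $R_u^{-1}R_{vT_u}R_u$; the product then telescopes, and the conjugation folds up via $R_{vT_u}^{\,k-1}=R_{(vT_u)^{k-1}}$ to leave $R_u\,R_{(vT_u)^k}\,R_u$. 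The case $k=0$ is immediate, both sides being $R_{u^2}$.

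It remains to turn the conjugacy relation back into an identity of elements. Applying \eqref{Eq:RBol} once more gives $R_u\,R_{(vT_u)^k}\,R_u=R_{(u\cdot(vT_u)^k)u}$, and since a right translation is determined by its value at $1$, comparing the two sides yields $(u\cdot(vT_u)^k)u=(u^2)L_v^k$. Cancelling the outer $u$ by the right inverse property and then undoing the remaining left factor through \eqref{Eq:LeftDivision} produces precisely $(vT_u)^k=(u^{-1}\cdot(u^2)L_v^k)u^{-1}$.

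I expect the one delicate point to be the telescoping in the middle step: one must check that the outermost $R_u$'s survive while every interior $R_u^{\pm1}$ cancels, and that right power alternativity is invoked correctly both to pass between $R_u^2$ and $R_{u^2}$ and to replace $R_{vT_u}^{\,k-1}$ by $R_{(vT_u)^{k-1}}$. The surrounding manipulations with inverses and the two divisions are routine once \eqref{Eq:LeftDivision} and the right inverse property are in hand.
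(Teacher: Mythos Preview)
Your argument is correct and is essentially the same as the paper's: both derive $R_{vT_u}=R_u^{-1}R_{vu^2}R_u^{-1}$ (your identity $R_uR_{vT_u}R_u=R_{vu^2}$ rearranged), invoke Lemma~\ref{Lm:RL} with $(u,v,m)\mapsto(v,u^2,k)$, and collapse the resulting product by right power alternativity. The only difference is organizational: the paper starts from $(vT_u)^k$ and pushes forward to the target expression, whereas you first isolate the conjugacy relation $R_uR_{(vT_u)^k}R_u=R_{(u^2)L_v^k}$ and then translate it back to elements via \eqref{Eq:LeftDivision}; the underlying computation is identical.
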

\begin{proof}
First, using \eqref{Eq:LeftDivision}, we can express $vT_u$ equationally as
\begin{equation}\label{Eq:T}
    vT_u = (u^{-1}\cdot vu^2)u^{-1}.
\end{equation}
The case $k=0$ is clear. For $k>0$, \eqref{Eq:T} yields
\begin{displaymath}
    (vT_u)^k    = vT_u R_{vT_u}^{k-1} = ((u^{-1}\cdot vu^2)u^{-1}) R_{(u^{-1}\cdot vu^2)u^{-1}}^{k-1}.
\end{displaymath}
Using \eqref{Eq:RBol}, the last expression can be rewritten as
\begin{displaymath}
    (u^{-1})R_{vu^2} R_u^{-1} ( R_u^{-1} R_{vu^2} R_u^{-1} )^{k-1} = (u^{-1})R_{vu^2} (R_u^{-1} R_u^{-1} R_{vu^2} )^{k-1} R_u^{-1}.
\end{displaymath}
Using the right power alternative property and Lemma \ref{Lm:RL}, we can further rewrite this as
\begin{displaymath}
    (u^{-1})R_{vu^2} (R_u^{-2} R_{vu^2})^{k-1} R_u^{-1} = (u^{-1})R_{(u^2)L_v^k} R_u^{-1} = (u^{-1}\cdot (u^2)L_v^k)u^{-1},
\end{displaymath}
completing the proof.
\end{proof}

\subsection{Bruck loops and twisted subgroups}

A subset $S$ of a loop $Q$ is \emph{uniquely $2$-divisible} if the mapping $x\mapsto x^2$ is a bijection of $S$. Given $x\in S$, the unique element $y\in S$ such that $y^2=x$ will be denoted by $x^{1/2}$.

The theory of right Bruck loops of odd order (and, more generally, of uniquely $2$-divisible right Bruck loops) has been greatly developed by G.~Glauberman in 1960s. The following result is excerpted from \cite{GlaubermanI}.

\begin{theorem}[\cite{GlaubermanI}]\label{Th:Gl}
Let $Q$ be a right Bruck loop of odd order. Then:
\begin{enumerate}
\item[(i)] For all $u$, $v\in Q$, the identities $(uv)^2=(vu^2)v$ and $R_{uv}=(R_vR_u^2R_v)^{1/2}$ hold.
\item[(ii)] $Q$ is solvable.
\item[(iii)] $|\rmlt{Q}|$ has the same prime factors as $|Q|$.
\item[(iv)] For every prime $p$ dividing $|Q|$, there is an element of order $p$ in $Q$.
%\item[(v)] If $S$ is a subloop of $Q$ then $|S|$ divides $|Q|$.
\end{enumerate}
\end{theorem}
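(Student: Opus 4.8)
The plan is to realize $Q$ as a twisted subgroup of its own right multiplication group and thereby transfer questions about $Q$ into questions about groups, which is the mechanism underlying Glauberman's work. Write $G=\rmlt{Q}$ and let $J$ be the inversion map. The right power alternative property gives $R_u^{-1}=R_{u^{-1}}$, and \eqref{Eq:RBol} gives $R_uR_vR_u=R_{(uv)u}$, so the right section $R_Q$ contains the identity, is closed under inversion, and satisfies $R_uR_vR_u\in R_Q$; that is, $R_Q$ is a twisted subgroup of $G$ that generates $G$. Moreover \eqref{Eq:AIP} makes $J$ an automorphism of $Q$, since $(uv)J=(uv)^{-1}=u^{-1}v^{-1}=(uJ)(vJ)$, so conjugation by $J$ is an involutory automorphism of $G$ sending each $R_u$ to $R_{u^{-1}}=R_u^{-1}$, i.e.\ inverting $R_Q$. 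Finally, every element of $Q$ has odd order, so each $x$ equals the square of $x^{(|x|+1)/2}$; thus squaring is onto and hence bijective, $Q$ is uniquely $2$-divisible, and because $R_x^2=R_{x^2}$ the squaring map is injective on $R_Q$ as well.

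For (i) I would first prove the operator identity $R_{uv}^2=R_vR_u^2R_v$. The right-hand side equals $R_vR_{u^2}R_v=R_{(vu^2)v}$ by the Bol identity and right power alternativity, so both sides lie in $R_Q$ and the operator identity is equivalent to the loop identity $(uv)^2=(vu^2)v$. This identity I would establish by a direct computation inside $G$, using Lemma \ref{Lm:RL}, the relation $R_v^{-1}=R_{v^{-1}}$, and the fact that $J$ inverts $R_Q$; concretely, conjugating by $R_v^{-1}$ reduces the claim to $(v^{-1}\cdot (uv)^2)v^{-1}=u^2$, which one checks using \eqref{Eq:AIP}. Once $R_{uv}^2=R_vR_u^2R_v$ is known, the second identity of (i) is immediate: $R_{uv}$ and the unique square root of $R_vR_u^2R_v$ inside $R_Q$ both square to $R_vR_u^2R_v$, and squaring is injective on $R_Q$, whence $R_{uv}=(R_vR_u^2R_v)^{1/2}$.

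For (ii)--(iv) I would invoke the correspondence, built on the twisted subgroup structure above, between right Bruck loops of odd order and groups of odd order carrying an involutory automorphism that inverts a generating twisted subgroup. Statement (ii) then follows from the Feit--Thompson odd order theorem: the associated group has odd order, hence is solvable, and solvability passes to $Q$ because the correspondence matches normal subloops of $Q$ with invariant sections of the group. For (iii) I would use the orbit--stabilizer relation $|G|=|Q|\cdot|\rinn{Q}|$ together with $\rinn{Q}\le\aut{Q}$ and induction on $|Q|$ through a minimal normal subloop, showing that every prime dividing $|\rinn{Q}|$ already divides $|Q|$; the reverse inclusion is clear because $G$ acts transitively on $Q$, so $|Q|$ divides $|G|$. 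Statement (iv) is the Cauchy property: given (iii), a prime $p$ dividing $|Q|$ divides $|G|$, and using solvability together with $|R_u|=|u|$ one descends along a chief series to produce an element of order $p$ in $Q$.

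The main obstacle is twofold. The genuinely hard external input is the solvability in (ii), which is not elementary and rests on the odd order theorem. Internally, the delicate step is the operator identity $R_{uv}^2=R_vR_u^2R_v$ of (i): although it reduces to the loop identity $(uv)^2=(vu^2)v$, proving the latter uses \eqref{Eq:AIP} in an essential way (it fails in a general right Bol loop), and it underpins both the rest of (i) and the entire twisted-subgroup--to--group dictionary on which (ii)--(iv) rely.
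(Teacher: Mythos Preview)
The paper does not give a proof of Theorem~\ref{Th:Gl}; it is quoted from Glauberman \cite{GlaubermanI} as background, so there is no in-paper argument to compare your proposal against. Your overall architecture---realize $R_Q$ as a twisted subgroup of $G=\rmlt{Q}$ inverted by the involutory automorphism coming from $J$, and for (ii) invoke the Feit--Thompson odd order theorem---is indeed the mechanism behind Glauberman's original proof.

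There is, however, a genuine gap in your treatment of (i). Your reduction of $R_{uv}^2=R_vR_u^2R_v$ to $(v^{-1}\cdot(uv)^2)v^{-1}=u^2$ is correct, but ``one checks using \eqref{Eq:AIP}'' is not: if you invert both sides via \eqref{Eq:AIP} and use $(uv)^{-2}=(u^{-1}v^{-1})^2$, you obtain $(v(u^{-1}v^{-1})^2)v=u^{-2}$, which after the substitution $u\mapsto u^{-1}$, $v\mapsto v^{-1}$ is exactly the identity you started from---the argument is circular. The standard route (and the one implicit in the paper, which records just above the theorem that right Bruck loops are right automorphic) is to first establish $\rinn{Q}\le\aut{Q}$; then $R_{u,v}=R_uR_vR_{uv}^{-1}\in\aut{Q}$ is fixed by $J$, while directly $R_{u,v}^J=R_u^{-1}R_v^{-1}R_{uv}$, and equating these gives $R_{uv}^2=R_vR_u^2R_v$. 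So the ``delicate step'' you flag is real, but it is resolved through the right automorphic property rather than by a direct \eqref{Eq:AIP} computation. Your sketches for (iii) and (iv) are in the right spirit though vague; Glauberman's actual arguments pass through Lagrange and Hall-type theorems for Bruck loops of odd order rather than a bare chief-series descent.
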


The conclusion of Theorem \ref{Th:Gl}(iii) remains valid in the more general setting of right Bol loops of odd order:

\begin{proposition}[\cite{FoKiPh, GlaubermanI}]
Let $Q$ be a right Bol loop of odd order. Then $|\rmlt{Q}|$ has the same prime factors as $|Q|$.
\end{proposition}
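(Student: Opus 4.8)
The plan is to realize the right section $R_Q$ as a twisted subgroup of $\rmlt{Q}$ consisting of elements of odd order, and then to transfer the already-known Bruck case (Theorem \ref{Th:Gl}(iii)) along Glauberman's twisted-subgroup/Bruck-loop correspondence. Throughout, write $\pi(n)$ for the set of prime divisors of a positive integer $n$, and set $G=\rmlt{Q}$ and $T=R_Q=\setof{R_u}{u\in Q}$.

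First I would record that $T$ is a twisted subgroup of $G$: it contains $R_1=\mathrm{id}$, it is closed under inversion because $R_u^{-1}=R_{u^{-1}}\in T$ by the right power alternative property, and it is closed under the twisted product $R_uR_vR_u=R_{(uv)u}\in T$, which is the translation form of \eqref{Eq:RBol}. By construction $\langle T\rangle=G$. Moreover every element of $T$ has odd order, since $|R_u|=|u|$ divides the odd integer $|Q|$. Because $R_Q$ is a transversal to $\rinn{Q}$ in $G$, we have $|G|=|T|\cdot|\rinn{Q}|=|Q|\cdot|\rinn{Q}|$, so $|Q|$ divides $|G|$ and hence $\pi(Q)\subseteq\pi(G)$. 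This easy inclusion uses neither oddness nor the Bol identity beyond the twisted-subgroup structure, and the whole content of the statement is the reverse inclusion $\pi(G)\subseteq\pi(Q)$.

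Next I would build a Bruck loop on $T$. Since each $c\in T$ has odd order, repeated use of twisted closure gives $\langle c\rangle=\setof{c^{2k+1}}{k\in\Z}\subseteq T$, so $c$ has a unique square root $c^{1/2}\in\langle c\rangle\subseteq T$; likewise $ab^2a\in T$ for all $a,b\in T$. Hence Glauberman's construction
\[
    a\circ b=(ab^2a)^{1/2}
\]
is well defined, and by \cite{GlaubermanI} it turns $K=(T,\circ)$ into a right Bruck loop of odd order $|Q|$ whose associated twisted subgroup is again $T$. Applying Theorem \ref{Th:Gl}(iii) to $K$ yields $\pi(\rmlt{K})=\pi(K)=\pi(T)=\pi(Q)$.

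It remains to feed this conclusion back into $G$, and this is where I expect the real difficulty to lie. Theorem \ref{Th:Gl}(iii) as stated bounds the primes of the \emph{multiplication} group $\rmlt{K}$, whereas the group we must control is $G=\rmlt{Q}$, which arises as the possibly larger enveloping group $\langle T\rangle$ generated by the same twisted subgroup $T$. The main obstacle is therefore to verify that no new primes are introduced in passing from $\rmlt{K}$ to the enveloping group $G$; concretely, one invokes the envelope portion of Glauberman's theory for odd-order Bruck loops (of which Theorem \ref{Th:Gl}(iii) is the multiplication-group specialization), as carried out for general twisted subgroups in \cite{FoKiPh}, to conclude $\pi(G)=\pi(K)=\pi(Q)$. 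Combined with the inclusion $\pi(Q)\subseteq\pi(G)$ from the first step, this gives $\pi(\rmlt{Q})=\pi(Q)$, as claimed.
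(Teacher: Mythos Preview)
The paper does not supply a proof of this proposition at all: it is stated with the citation \cite{FoKiPh, GlaubermanI} and no argument is given. Your proposal is a correct outline of the strategy used in those references---realize $R_Q$ as a twisted subgroup generating $G=\rmlt{Q}$, pass to the associated Bruck loop, and then invoke the envelope result---and you correctly identify that the substantive step is controlling $\pi(\langle T\rangle)$ rather than $\pi(\rmlt{K})$, which is precisely what \cite{FoKiPh} supplies. In other words, your proof ends at the same citation the paper starts from, so the two agree; just be aware that your last paragraph is a pointer to \cite{FoKiPh} rather than an independent argument, and that is also all the paper offers.
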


A subset $T$ of a group $H$ is a \emph{twisted subgroup} of $H$ if it contains the identity element, is closed under inverses, and is closed under the product $(u,v)\mapsto uvu$.

The following result appeared essentially as \cite[Lemma 3]{GlaubermanI}. For a proof in the more modern terminology of twisted subgroups, see \cite[Proposition 2.3]{Vojtechovsky}.

\begin{proposition}[\cite{GlaubermanI, Vojtechovsky}]
Let $T$ be a uniquely $2$-divisible twisted subgroup of a group $H$. Then $(T,\circ)$ with multiplication $u\circ v = (vu^2v)^{1/2}$ is a right Bruck loop. Moreover, the powers and inverses in $(T,\cdot)$ and $(T,\circ)$ coincide.
\end{proposition}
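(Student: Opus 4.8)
The plan is to verify directly that $(T,\circ)$ is a loop satisfying \eqref{Eq:RBol} and \eqref{Eq:AIP}, and then to read off the coincidence of powers and inverses. First I would check that $\circ$ is well defined: $u^2 = u\cdot 1\cdot u\in T$ and hence $vu^2v\in T$ for all $u,v\in T$ (both are twisted products), so unique $2$-divisibility makes $(vu^2v)^{1/2}\in T$ unambiguous; the same argument shows $T$ is closed under all integer powers, since $u^{n+1}=u\cdot u^{n-1}\cdot u$. A one-line computation gives $u\circ 1 = (u^2)^{1/2}=u$ and $1\circ u = (u^2)^{1/2}=u$, so $1$ is a two-sided identity. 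Two auxiliary bijections of $T$ carry most of the load: the squaring map $f\colon x\mapsto x^2$, with inverse $x\mapsto x^{1/2}$, and for each $a\in T$ the map $g_a\colon x\mapsto axa$, which sends $T$ into $T$ and is inverted by $g_{a^{-1}}$ because $a^{-1}(axa)a^{-1}=x$. Since $u\,R_v^\circ = (vu^2v)^{1/2}$, we obtain the factorization $R_v^\circ = f\,g_v\,f^{-1}$, so every right translation of $(T,\circ)$ is a bijection.

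For the left translations I would note that $v L_u^\circ = (v\,k_{u^2})^{1/2}$, where $k_a\colon v\mapsto vav$, so $L_u^\circ$ is a bijection as soon as each $k_a$ (for $a\in T$) is a bijection of $T$. This is the one genuinely non-formal point, and I expect it to be the main obstacle. The trick is a ``half-angle'' substitution: to solve $vav=w$ in $T$, set $v=a^{-1/2}za^{-1/2}$ with $a^{-1/2}=(a^{1/2})^{-1}\in T$; then the inner factor collapses, $vav=a^{-1/2}z^2a^{-1/2}$, and the equation becomes $z^2 = a^{1/2}wa^{1/2}$, which has the unique solution $z=(a^{1/2}wa^{1/2})^{1/2}\in T$. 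Reading the substitution in both directions shows that $k_a$ is a bijection, so every $L_u^\circ$ is a bijection and $(T,\circ)$ is a loop.

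The Bol identity now falls out of the factorization. A direct computation gives $g_vg_wg_v=g_{vwv}$, hence $R_v^\circ R_w^\circ R_v^\circ = f\,g_{vwv}\,f^{-1}=R_{vwv}^\circ$; and from $(v\circ w)^2 = wv^2w$ one checks $(v\circ w)\circ v = (vwv^2wv)^{1/2}=vwv$, so $R_v^\circ R_w^\circ R_v^\circ = R_{(v\circ w)\circ v}^\circ$, which is the translation form of \eqref{Eq:RBol}. For \eqref{Eq:AIP} I would first observe that the ambient inverse is a two-sided inverse in $(T,\circ)$, since $u\circ u^{-1}=(u^{-1}u^2u^{-1})^{1/2}=1=u^{-1}\circ u$, so by uniqueness of loop inverses it is the loop inverse. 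Because inversion commutes with the square root on $T$ (both $(x^{1/2})^{-1}$ and $(x^{-1})^{1/2}$ lie in $T$ and square to $x^{-1}$), we get $(u\circ v)^{-1}=(v^{-1}u^{-2}v^{-1})^{1/2}=u^{-1}\circ v^{-1}$, which is \eqref{Eq:AIP}. Hence $(T,\circ)$ is a right Bruck loop.

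Finally, inverses coincide by the previous paragraph, and for powers I would invoke power associativity of right Bol loops: by induction $u^{\circ(n+1)}=u^{\circ n}\circ u=(u\,(u^n)^2\,u)^{1/2}=(u^{2n+2})^{1/2}=u^{n+1}$, with the base cases immediate and negative exponents handled through inverses, so $u^{\circ n}=u^n$ for all $n\in\Z$. The only step that is not a routine manipulation with twisted products and square roots is the bijectivity of $k_a$; once the half-angle substitution is found, everything else is formal.
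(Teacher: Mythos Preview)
Your proof is correct and complete. The paper itself does not supply an argument for this proposition, deferring instead to the cited references \cite{GlaubermanI, Vojtechovsky}, so there is no paper proof to compare against; you have filled in exactly what a reader would have to reconstruct. Two minor simplifications are available: in the left-translation step you only ever need $k_a$ for $a=u^2$, so $a^{1/2}=u$ is immediate and the substitution becomes $v=u^{-1}zu^{-1}$ without a separate appeal to surjectivity of squaring; and for the coincidence of powers you can bypass power associativity entirely by the one-line check $u^m\circ u^n=(u^n u^{2m} u^n)^{1/2}=(u^{2(m+n)})^{1/2}=u^{m+n}$, which shows $\langle u\rangle$ is the same cyclic subgroup under both operations.
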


The right Bol identity $R_uR_vR_u = R_{(uv)u}$ together with $R_u^{-1} = R_{u^{-1}}$ show that in a right Bol loop $Q$ the right section $R_Q$ is a twisted subgroup of $\rmlt{Q}$, and hence $(R_Q,\circ)$ with multiplication $R_u\circ R_v = (R_vR_u^2R_v)^{1/2}$ is a right Bruck loop. Using the evaluation map $\varphi\mapsto 1\varphi$, we obtain:

\begin{proposition}[\cite{FoKiPh, Vojtechovsky}]\label{Pr:AssociatedBruck}
Let $(Q,\cdot)$ be a uniquely $2$-divisible right Bol loop. Then $(Q,\circ)$ defined by $u\circ v = (vu^2\cdot v)^{1/2}$ is a right Bruck loop. Moreover, powers and inverses in $(Q,\cdot)$ coincide with those in $(Q,\circ)$.
\end{proposition}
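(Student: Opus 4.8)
The plan is to transport the right Bruck loop structure already established on the right section $R_Q$ over to $Q$ itself, using the evaluation map $\varphi\mapsto 1\varphi$ as advertised in the paragraph preceding the statement. The discussion there, together with the previous proposition on twisted subgroups, tells us that $R_Q$ is a twisted subgroup of $\rmlt{Q}$ and that $(R_Q,\circ)$ with $R_u\circ R_v = (R_vR_u^2R_v)^{1/2}$ is a right Bruck loop. So the entire task is to show that this structure pulls back along the evaluation map to exactly the claimed operation on $Q$.

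The first step I would carry out is to verify that $R_Q$ is uniquely $2$-divisible \emph{as a twisted subgroup}, so that the square roots appearing in its multiplication genuinely lie in $R_Q$. Since right Bol loops are right power alternative, $R_u^2 = R_{u^2}$, and hence the squaring map on $R_Q$ corresponds, under the bijection $u\mapsto R_u$, to the squaring map on $Q$. Unique $2$-divisibility of $Q$ then forces unique $2$-divisibility of $R_Q$, and in particular the square root of $R_w\in R_Q$ taken in $\rmlt{Q}$ lies in $R_Q$ and equals $R_{w^{1/2}}$.

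Next I would consider $\varepsilon\colon R_Q\to Q$, $R_u\mapsto 1R_u = u$, a bijection because a right translation of a loop is determined by its value at $1$. The key computation is that $\varepsilon$ carries $\circ$ on $R_Q$ to the claimed operation on $Q$: using $R_u^2 = R_{u^2}$ and the right Bol identity in the form $R_vR_{u^2}R_v = R_{(vu^2)v}$, we obtain $R_vR_u^2R_v = R_{vu^2\cdot v}$, whose square root in $R_Q$ is $R_{(vu^2\cdot v)^{1/2}}$ by the previous step. Evaluating at $1$ gives $(R_u\circ R_v)\varepsilon = (vu^2\cdot v)^{1/2} = u\circ v$, so $\varepsilon$ is an isomorphism of $(R_Q,\circ)$ onto $(Q,\circ)$; since $(R_Q,\circ)$ is a right Bruck loop, so is $(Q,\circ)$.

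Finally, the claim about powers and inverses follows from the same transport. The twisted-subgroup proposition gives that powers and inverses in $(R_Q,\cdot)$ and in $(R_Q,\circ)$ coincide; under $\varepsilon$, the powers $R_u^n = R_{u^n}$ and the inverse $R_u^{-1} = R_{u^{-1}}$ in $(R_Q,\cdot)$ correspond to $u^n$ and $u^{-1}$ in $(Q,\cdot)$, while powers and inverses in $(R_Q,\circ)$ correspond to those in $(Q,\circ)$. The one point requiring genuine care is the interaction of the square-root operation with $\varepsilon$; once unique $2$-divisibility of $R_Q$ and the identity $R_u^2 = R_{u^2}$ are secured, everything else is a direct verification, which is why this known result (cf. \cite{FoKiPh, Vojtechovsky}) admits such a short transport proof.
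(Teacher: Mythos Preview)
Your proof is correct and follows precisely the approach indicated in the paper: the paragraph preceding the proposition already notes that $R_Q$ is a twisted subgroup, that $(R_Q,\circ)$ is a right Bruck loop, and that the result is obtained ``using the evaluation map $\varphi\mapsto 1\varphi$,'' with the details deferred to \cite{FoKiPh, Vojtechovsky}. You have simply spelled out those details (unique $2$-divisibility of $R_Q$ via $R_u^2=R_{u^2}$, the computation $R_vR_u^2R_v=R_{(vu^2)v}$, and the transport of powers and inverses), which is exactly what the paper intends.
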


When $(Q,\cdot)$ is a uniquely $2$-divisible right Bol loop, we call the loop $(Q,\circ)$ of Proposition \ref{Pr:AssociatedBruck} the \emph{right Bruck loop associated with} $(Q,\cdot)$.

\begin{lemma}\label{Lm:LxRx}
Let $Q$ be a uniquely $2$-divisible right Bol loop and let $(Q,\circ)$ be the associated right Bruck loop. Then $\rmlt{Q,\circ}$ is conjugate to the group $\genof{L_uR_u}{u\in Q}$ in the symmetric group on $Q$.
\end{lemma}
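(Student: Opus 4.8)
The plan is to exhibit an explicit conjugating element, namely the squaring map. Let $S\colon Q\to Q$ be the map $uS = u^2$. Since $Q$ is uniquely $2$-divisible, $S$ is a bijection of $Q$, so $S\in\mathrm{Sym}(Q)$, and this is the element by which I expect the two groups to be conjugate.

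First I would record how the right translations of $(Q,\circ)$ interact with $S$. Writing $R_u^\circ$ for the right translation by $u$ in $(Q,\circ)$, the defining formula of Proposition~\ref{Pr:AssociatedBruck} gives $vR_u^\circ = v\circ u = (uv^2\cdot u)^{1/2}$. Squaring both sides, and using that squares in $(Q,\circ)$ coincide with squares in $(Q,\cdot)$ (again by Proposition~\ref{Pr:AssociatedBruck}), yields $(vR_u^\circ)^2 = uv^2\cdot u$. The right-hand side is exactly $(v^2)L_uR_u = (vS)L_uR_u$, while the left-hand side is $(vR_u^\circ)S$. As this holds for every $v\in Q$, I obtain the operator identity
\[
    R_u^\circ S = S\,L_uR_u
\]
(composing maps on the right, as throughout the paper).

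Rearranging gives $R_u^\circ = S(L_uR_u)S^{-1}$ for every $u\in Q$; that is, each generator $R_u^\circ$ of $\rmlt{Q,\circ}$ is the conjugate by $S^{-1}$ of the corresponding generator $L_uR_u$. Since conjugation by $S^{-1}$ is an automorphism of $\mathrm{Sym}(Q)$, it carries $\genof{L_uR_u}{u\in Q}$ onto $\genof{R_u^\circ}{u\in Q} = \rmlt{Q,\circ}$. Hence the two groups are conjugate in $\mathrm{Sym}(Q)$, which is the assertion.

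The computation is short, so I do not anticipate a serious obstacle. The only points demanding care are the right-to-left composition convention, which dictates that $S$ appear on the correct side in the identity above, and the legitimacy of passing freely between squares in $(Q,\cdot)$ and in $(Q,\circ)$; the latter is precisely the content of Proposition~\ref{Pr:AssociatedBruck} and so requires no separate argument.
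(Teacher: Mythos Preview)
Your proof is correct and follows essentially the same approach as the paper: both use the squaring map as the conjugating element and verify the identity $R_u^\circ S = S\,L_uR_u$ (the paper writes this as $\sigma^{-1}R_v^\circ\sigma = L_vR_v$).
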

\begin{proof}
Let $\sigma:Q\to Q$ be the squaring map, and let $R_v^\circ$ be the right translation by $v$ in $(Q,\circ)$. Then for every $u$, $v\in Q$ we have $uR_v^\circ\sigma = (u\circ v)^2 = vu^2\cdot v = u\sigma L_vR_v$, so $\sigma^{-1}R_v^\circ\sigma = L_vR_v$.
\end{proof}

Here is another source of twisted subgroups. Let $H$ be a group and $\tau\in\aut{H}$. Then the set
\[
    K_H(\tau) = \setof{u\in H}{u\tau=u^{-1}}
\]
of anti fixed points of $\tau$ is a twisted subgroup of $H$.

\begin{lemma}[\cite{FoKiPh, Vojtechovsky}]\label{Lm:Twisted}
Let $T$ be a twisted subgroup of a uniquely $2$-divisible group $H$, let $\tau\in\aut{H}$, and suppose that $T\le K_H(\tau)$ and $\langle T\rangle = H$. Then $T=K_H(\tau)$.
\end{lemma}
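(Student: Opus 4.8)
The inclusion $T\subseteq K_H(\tau)$ holds by hypothesis, so the plan is to prove the reverse inclusion $K_H(\tau)\subseteq T$. I would fix $x\in K_H(\tau)$, so that $x\tau=x^{-1}$. Since $T$ is closed under inverses and $\langle T\rangle=H$, I can write $x=t_1t_2\cdots t_n$ with each $t_i\in T$. Applying $\tau$ and using $t_i\tau=t_i^{-1}$ (valid because $t_i\in T\subseteq K_H(\tau)$) together with $x\tau=x^{-1}$ gives $t_1^{-1}t_2^{-1}\cdots t_n^{-1}=t_n^{-1}\cdots t_2^{-1}t_1^{-1}$; inverting both sides yields the \emph{reversal identity} $t_1t_2\cdots t_n=t_n\cdots t_2t_1$. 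Thus $x$ equals simultaneously the product of the $t_i$ and the product of the $t_i$ in reverse order. This is where all three hypotheses enter: $\langle T\rangle=H$ to get the word, $T\subseteq K_H(\tau)$ to invert each letter, and $x\in K_H(\tau)$ to invert $x$.

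Next I would use this to locate $x^2$ inside $T$. Multiplying the two expressions for $x$,
\[
    x^2=(t_1t_2\cdots t_n)(t_n\cdots t_2t_1)=t_1\bigl(t_2\bigl(\cdots(t_{n-1}(t_nt_n)t_{n-1})\cdots\bigr)t_2\bigr)t_1,
\]
which is a palindrome in the $t_i$. Since $T$ is a twisted subgroup it contains $t_n^2=t_n\cdot 1\cdot t_n$ and is closed under the twisted product $(u,v)\mapsto uvu$; peeling the palindrome from the inside out, each partial expression $t_k(\,\cdots)t_k$ remains in $T$, so after $n$ steps I obtain $x^2\in T$.

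Finally I would pass from $x^2$ back to $x$ using unique $2$-divisibility, and this is the step I expect to be the crux. As $H$ is uniquely $2$-divisible, $x$ is the unique square root of $x^2$ in $H$, so it suffices to show that this square root already lies in $T$. A short induction shows that a twisted subgroup contains the cyclic group generated by each of its elements (using $u^a\cdot u^b\cdot u^a=u^{2a+b}$ starting from $1$ and $u$), so $\langle x^2\rangle\subseteq T$; in the finite odd-order situation relevant to this paper $x$ has odd order, hence is an even power of $x^2$, giving $x\in\langle x^2\rangle\subseteq T$. The only delicate point is therefore the closure of $T$ under extraction of square roots (equivalently, that $T$ inherits unique $2$-divisibility from $H$); once this square-root step is justified we get $x\in T$, completing the reverse inclusion and the proof.
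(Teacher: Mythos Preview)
The paper does not prove this lemma; it is quoted from \cite{FoKiPh, Vojtechovsky} without argument, so there is no paper proof to compare against. Your approach (reversal identity from $x\tau=x^{-1}$, then the palindrome $x^2=t_1\cdots t_n t_n\cdots t_1\in T$ via the twisted-subgroup closure $uvu\in T$) is the standard one and is correct up to the final step.

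The gap you flag at the end is real for the general statement, but you have in fact already done the work to close it, just not in the way you suggest. You do \emph{not} need to show that $T$ is closed under extraction of square roots. What your palindrome argument actually establishes is that the squaring map $\sigma\colon h\mapsto h^2$ sends \emph{all} of $K_H(\tau)$ into $T$. Now note that $\sigma$ restricts to a bijection of $K_H(\tau)$ onto itself: it is injective because $\sigma$ is a bijection of $H$, and it is surjective because for $y\in K_H(\tau)$ with $s=y^{1/2}$ in $H$ one has $(s\tau)^2=(s^2)\tau=y\tau=y^{-1}=(s^{-1})^2$, whence $s\tau=s^{-1}$ by injectivity of $\sigma$, so $s\in K_H(\tau)$. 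A bijection of $K_H(\tau)$ whose image is contained in the subset $T$ forces $K_H(\tau)=T$. Your finite odd-order workaround via $x\in\langle x^2\rangle$ is correct and suffices for every application in this paper, but the general case follows just as easily once you look at $\sigma$ on $K_H(\tau)$ rather than on $T$.
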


\subsection{The group $\rmlt{Q}\rtimes\langle J\rangle$ for right Bruck loops of odd order}

Recall that in a right Bruck loop the inversion map $J$ acts on the right section via $R_u^J = R_u^{-1}$. We will investigate the group $A = \rmlt{Q}\rtimes\langle J\rangle$ for a right Bruck loop $Q$ of odd order and show that the isomorphism type of $A$ determines the isomorphism type of $Q$.

\begin{lemma}\label{Lm:Bloop}
Let $Q$ be a right Bruck loop of odd order. Let $G=\rmlt{Q}$ and $A = G\rtimes \langle J\rangle$. Then:
\begin{enumerate}
\item[(i)] All elements of $A\setminus G$ have even order.
\item[(ii)] $\rinn{Q}$ coincides with the fixed points of the action of $J$ on $G$.
\item[(iii)] The right section $R_Q$ coincides with the set $K_G(J)$ of anti fixed points of $J$ in $G$.
\item[(iv)] The mutually inverse bijections $f:S\mapsto R_S=\setof{R_u\in G}{u\in S}$ and $g: T\mapsto 1T =\setof{1\varphi}{\varphi\in T}$ form a one-to-one correspondence between subloops of $Q$ and twisted subgroups of $G$ contained in $R_Q$.
\item[(v)] Let $T$ be a twisted subgroup of $G$ contained in $R_Q$. Then there is a subgroup $U\le G$ such that $T=R_Q\cap U$.
\item[(vi)] Let $T$ be a twisted subgroup of $G$ contained in $R_Q$, and let $1T=S$ be the corresponding subloop of $Q$. Then $S\unlhd Q$ if and only if $T=R_Q\cap U$ for some normal, $J$-invariant subgroup $U$ of $G$.
\end{enumerate}
\end{lemma}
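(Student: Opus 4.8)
The plan is to run everything off two structural facts. First, by Theorem~\ref{Th:Gl}(iii) the group $G=\rmlt{Q}$ has odd order, so squaring is a bijection of $G$ and square roots are unique; in particular every $x\in G$ has odd order and $x^{1/2}=x^{(|x|+1)/2}$. Second, the inversion acts by $R_u^J=R_u^{-1}$, while $\varphi^J=\varphi$ for every $\varphi\in\aut{Q}\supseteq\rinn{Q}$, and every $g\in G$ factors uniquely as $g=\psi R_u$ with $\psi\in\rinn{Q}$, $u\in Q$. With these in hand the first three parts are short. For (i), $A\setminus G$ consists of the elements $gJ$, and $(gJ)^2=g\,g^J\in G$ has odd order; since no odd power of $gJ$ lies in $G$, the order of $gJ$ is even. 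For (ii), $\rinn{Q}\le\aut{Q}$ is $J$-fixed, and conversely if $g=\psi R_u$ is $J$-fixed then $\psi R_{u^{-1}}=\psi R_u$ forces $u^{-1}=u$, hence $u=1$ by oddness and $g=\psi\in\rinn{Q}$. For (iii), $R_Q\subseteq K_G(J)$ because $R_u^J=R_u^{-1}$, and $R_Q$ is a twisted subgroup generating the uniquely $2$-divisible group $G$, so Lemma~\ref{Lm:Twisted} gives $R_Q=K_G(J)$.

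For (iv) the map $u\mapsto R_u$ is a bijection $Q\to R_Q$ with inverse $R_u\mapsto 1R_u=u$, so $f$ and $g$ are mutually inverse; it remains to match subloops with twisted subgroups. If $S$ is a subloop then $R_S$ is twisted via $R_1=\mathrm{id}$, $R_u^{-1}=R_{u^{-1}}$, and $R_uR_vR_u=R_{(uv)u}$. Conversely a twisted subgroup $T$ is closed under squares ($R_u^2=R_u\cdot 1\cdot R_u$) and, by oddness, under square roots; applying this to the identity $R_{uv}=(R_vR_u^2R_v)^{1/2}$ of Theorem~\ref{Th:Gl}(i) shows $S=1T$ is closed under multiplication and inverses, hence a subloop by the criterion following \eqref{Eq:LeftDivision}. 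For (v), $T\subseteq K_G(J)$ is $J$-invariant, so $U=\langle T\rangle$ is $J$-invariant and $T\subseteq K_U(J)=U\cap R_Q$; since $U$ is uniquely $2$-divisible and generated by $T$, Lemma~\ref{Lm:Twisted} gives $T=K_U(J)=R_Q\cap U$.

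The real content is (vi). For the forward direction I push the quotient map $\nu\colon Q\to Q/S$ to right multiplication groups: it induces a homomorphism $\nu_*\colon G\to\rmlt{Q/S}$ with $R_u\mapsto R_{uS}$, whose kernel $N$ satisfies $R_Q\cap N=R_S=T$ (as $R_u\in N$ iff $uS=S$ iff $u\in S$). Putting $U=N\cap N^J$ makes $U$ normal and $J$-invariant, and since $R_u^J=R_{u^{-1}}$ and $S$ is closed under inverses one also has $R_Q\cap N^J=T$, whence $R_Q\cap U=T$. For the converse I build the quotient Bruck loop intrinsically. A normal, $J$-invariant $U$ yields a well-defined involutory automorphism $\bar J$ of $\bar G=G/U$, and the projection $\pi\colon G\to\bar G$ carries $K_G(J)$ into $K_{\bar G}(\bar J)$. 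Because $\pi$ is a homomorphism between odd-order groups it commutes with squaring and with the unique square root, hence is a homomorphism for the operation $a\circ b=(ba^2b)^{1/2}$. Identifying $Q$ with $(R_Q,\circ)=(K_G(J),\circ)$ via $u\mapsto R_u$ (an isomorphism by Theorem~\ref{Th:Gl}(i) and Proposition~\ref{Pr:AssociatedBruck}), the composite $u\mapsto\overline{R_u}$ is a loop homomorphism $Q\to(K_{\bar G}(\bar J),\circ)$ with kernel $\{u:R_u\in U\}=1(R_Q\cap U)=1T=S$; therefore $S\unlhd Q$.

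I expect the converse direction of (vi) to be the main obstacle. The hypothesis only provides normality inside $\rmlt{Q}$, which a priori controls only the right inner mappings, so the naive attempt proves merely that $S$ is $\rinn{Q}$-invariant rather than genuinely normal; one cannot in general reach the left and middle inner mappings because left translations need not lie in $A$. The device that resolves this is to realize the quotient as an honest right Bruck loop $(K_{\bar G}(\bar J),\circ)$ and to obtain $S$ as the kernel of a loop homomorphism. The two points needing care are the well-definedness of $\bar J$ on $G/U$, which is exactly where $J$-invariance of $U$ is used, and the compatibility of $\pi$ with square roots, which rests on $G$ having odd order.
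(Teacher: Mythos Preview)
Your proof is correct. Parts (i)--(v) follow the paper's argument almost verbatim; the only notable variation is in (v), where you invoke Lemma~\ref{Lm:Twisted} to get $T=K_U(J)=R_Q\cap U$, while the paper argues directly that any $R_v\in\langle R_S\rangle$ satisfies $v=1R_{u_1}\cdots R_{u_k}\in S$.

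Part (vi) is where the approaches diverge. For the forward direction the paper takes $U=\ker\alpha$ itself and simply asserts it is $J$-invariant (which follows from \eqref{Eq:AIP}: $J$ permutes the cosets of $S$, so $J\ker\alpha\,J=\ker\alpha$). Your detour through $U=N\cap N^J$ is correct but unnecessary---$N$ is already $J$-invariant. For the converse, the paper does not argue at all: it cites \cite[Lemma 5(v)]{GlaubermanI}, which shows that $S=1(R_Q\cap U)$ is normal whenever $U\unlhd G$, with no $J$-invariance hypothesis needed. Your argument is genuinely different and self-contained: you pass to $\bar G=G/U$, use $J$-invariance to descend $J$ to $\bar J$, and realize $S$ as the kernel of the loop homomorphism $Q\cong(K_G(J),\circ)\to(K_{\bar G}(\bar J),\circ)$. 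This is a clean construction that avoids the external citation; the price is that it requires the $J$-invariance of $U$, whereas Glauberman's lemma does not. Since the statement of (vi) already includes $J$-invariance in the hypothesis, this costs you nothing here, and your proof has the advantage of making transparent exactly how the Bruck structure on the quotient arises.
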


\begin{proof}
(i) This is clear from the fact that $J$ is an involution and from the multiplication in $G \rtimes \langle J\rangle$.

(ii) Every element of $G$ is of the form $\psi R_u$ for some $\psi\in\rinn{Q}\le\aut{Q}$ and $u\in Q$. We have $(\psi R_u)^J = \psi^J R_u^J = \psi R_{u^{-1}}$. Thus $\psi R_u$ is a fixed point of $J$ if and only if $R_u=R_{u^{-1}}$, which happens if and only if $u=u^{-1}$, which is equivalent to $u=1$, since $Q$ has no elements of even order.

(iii) Recall that $R_Q$ is a twisted subgroup of $G=\langle R_Q\rangle$. We have $R_Q\le K_G(J)$ thanks to $R_u^J = R_{u^{-1}}$. By Theorem \ref{Th:Gl}, $G$ is of odd order, hence uniquely $2$-divisible. Lemma \ref{Lm:Twisted} then implies that $R_Q = K_G(J)$.

(iv) For a subloop $S$ of $Q$, we claim that $Sf=R_S$ is a twisted subgroup of $G$. Indeed, if $u$, $v\in S$ then $R_uR_vR_u = R_{(uv)u}\in R_S$ and $R_u^{-1} = R_{u^{-1}}\in R_S$. For a twisted subgroup $T$ of $G$ such that $T\subseteq R_Q$, we claim that $Tg = 1T$ is a subloop of $Q$. Indeed, if $R_u$, $R_v\in T$ then $R_u\circ R_v = (R_vR_u^2R_v)^{1/2} = R_{uv}\in T$ by Theorem \ref{Th:Gl}, so $1R_u\cdot 1R_v = uv = 1R_{uv}\in 1T$, and we also have $(1R_u)^{-1} = u^{-1} = 1R_u^{-1}\in 1T$. Finally, we have $Sfg = (R_S)g = S$ and $Tgf = (1T)f = R_{1T} = T$.

(v) By (iv), $T=R_S$ for some $S\le Q$. Let $U=\langle R_S\rangle\le G$. If $R_v\in U$ then $v=1R_v = 1R_{u_1}\cdots R_{u_k}$ for some $u_i\in S$, so $v\in S$. This shows that $R_Q\cap U=R_S=T$.

(vi) If $T=R_Q\cap U$ for some normal (not necessarily $J$-invariant) subgroup $U$ of $G$, then $S=1T$ is a normal subloop of $Q$ by \cite[Lemma 5(v)]{GlaubermanI}. Conversely, suppose that $1T=S\unlhd Q$. The projection $Q\to Q/S$ induces a surjective homomorphism $\alpha:G=\rmlt{Q} \to \rmlt{Q/S}$. For any $u\in Q$, we have $R_u\alpha=1$ if and only if $u\in S$. Hence $R_Q\cap \ker{\alpha} = R_S = T$. Moreover, $\ker{\alpha}$ is $J$-invariant.
\end{proof}

\begin{proposition}\label{Pr:BruckIsom}
For $i\in\{1,2\}$, let $Q_i$ be a right Bruck loop of odd order with inversion map $J_i$, and let $A_i=\rmlt{Q_i}\rtimes \langle J_i\rangle$. Then $Q_1\cong Q_2$ if and only if $A_1\cong A_2$.
\end{proposition}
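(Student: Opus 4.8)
The plan is to prove both implications, with the forward direction being routine and the reverse one hinging on a conjugacy argument inside $A_2$.

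For the forward direction I would start from a loop isomorphism $\alpha:Q_1\to Q_2$. The standard identity $R_u^\alpha = R_{u\alpha}$ shows that $R_u\mapsto R_{u\alpha}$ is a group isomorphism $\rmlt{Q_1}\to\rmlt{Q_2}$, and since $\alpha$ preserves inverses, $(u^{-1})\alpha = (u\alpha)^{-1}$, so this map intertwines the actions of $J_1$ and $J_2$ on the right sections via $R_u^{J_1}=R_{u^{-1}}$. Hence it extends to an isomorphism $A_1\to A_2$ carrying $J_1$ to $J_2$.

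For the reverse direction, let $\Phi:A_1\to A_2$ be an isomorphism. First I would recover the right multiplication groups intrinsically: by Theorem \ref{Th:Gl}(iii) each $G_i=\rmlt{Q_i}$ has odd order, while by Lemma \ref{Lm:Bloop}(i) every element of $A_i\setminus G_i$ has even order. Thus $G_i$ is exactly the set of odd-order elements of $A_i$, a characteristic subgroup, so $\Phi(G_1)=G_2$, and consequently $\Phi(J_1)$ is an involution lying in $A_2\setminus G_2$. The heart of the argument is then to show that, after modifying $\Phi$ by an inner automorphism of $A_2$, we may assume $\Phi(J_1)=J_2$; for this it suffices that all involutions of $A_2\setminus G_2$ be conjugate to $J_2$. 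Writing such an involution as $gJ_2$ with $g\in G_2$, the condition $(gJ_2)^2=1$ forces $g^{J_2}=g^{-1}$, i.e. $g=R_c\in K_{G_2}(J_2)=R_{Q_2}$ by Lemma \ref{Lm:Bloop}(iii). Conjugating $J_2$ by $R_u\in R_{Q_2}$ gives $R_u^{-1}J_2R_u=R_{u^{-2}}J_2$, using $R_u^{J_2}=R_{u^{-1}}=R_u^{-1}$ together with the right power alternative property. Since $Q_2$ has odd order it is uniquely $2$-divisible, so $u\mapsto u^{-2}$ is a bijection of $Q_2$; hence every $R_cJ_2$ is conjugate to $J_2$. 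Composing $\Phi$ with a suitable inner automorphism of $A_2$ therefore arranges $\Phi(J_1)=J_2$ while preserving $\Phi(G_1)=G_2$.

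Finally, with $\Phi(J_1)=J_2$, the restriction $\Phi|_{G_1}:G_1\to G_2$ intertwines the conjugation actions of $J_1$ and $J_2$, so it carries $R_{Q_1}=K_{G_1}(J_1)$ onto $R_{Q_2}=K_{G_2}(J_2)$. A group isomorphism preserves the twisted product $xyx$, as well as squaring, inverses, and (in odd order, via a power map) square roots, so $\Phi|_{R_{Q_1}}$ is an isomorphism of the associated right Bruck loops $(R_{Q_1},\circ)\to(R_{Q_2},\circ)$ under $R_u\circ R_v=(R_vR_u^2R_v)^{1/2}$. Composing with the evaluation isomorphisms $R_u\mapsto 1R_u=u$ from Lemma \ref{Lm:Bloop}(iv) then yields $Q_1\cong Q_2$. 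I expect the main obstacle to be precisely the conjugacy of involutions in $A_2\setminus G_2$; everything else is bookkeeping resting on Lemma \ref{Lm:Bloop}, and this step is exactly where unique $2$-divisibility (odd order) is indispensable.
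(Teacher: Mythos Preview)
Your proof is correct and follows essentially the same architecture as the paper's: adjust the isomorphism so that $J_1\mapsto J_2$, then transport the right sections via Lemma \ref{Lm:Bloop}(iii) and verify the loop isomorphism through the formula $R_{uv}=(R_vR_u^2R_v)^{1/2}$. The only difference is in the conjugacy step, where the paper simply observes that $\langle J_2\rangle$ is a Sylow $2$-subgroup of $A_2$ (since $|G_2|$ is odd) and invokes Sylow's theorem, whereas you compute the conjugacy of involutions in $G_2J_2$ explicitly; both arguments are valid and rest on the same odd-order hypothesis.
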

\begin{proof}
If $Q_1\cong Q_2$ then clearly $A_1\cong A_2$. For the converse, suppose that $f:A_1\to A_2$ is an isomorphism. Since $\langle J_2 \rangle$ is a Sylow $2$-subgroup in $A_2$ and all Sylow $2$-subgroups of $A_2$ are conjugate, we can compose $f$ with an inner automorphism of $A_2$ and assume without loss of generality that $J_1f=J_2$. By Lemma \ref{Lm:Bloop}(i), the group $G_i=\rmlt{Q_i}$ is the maximal normal subgroup of odd order in $A_i$, hence $G_1f = G_2$. By Lemma \ref{Lm:Bloop}(iii), $R_{Q_i}=K_{G_i}(J_i)$ is the set of anti fixed points of $J_i$. Given $\psi\in R_{Q_1}$, we then have $(\psi f)^{-1} = (\psi^{-1})f = (\psi^{J_1})f = (J_1\psi J_1)f = J_1f\cdot \psi f\cdot J_1f = J_2\cdot \psi f \cdot J_2 = (\psi f)^{J_2}$, so $\psi f\in R_{Q_2}$. It follows that $f$ induces a bijection $R_{Q_1}\to R_{Q_2}$. Define $\alpha:Q_1\to Q_2$ by $R_uf =R_{u\alpha}$. We claim that $\alpha$ is a loop isomorphism. Indeed, $R_{(uv)\alpha} = R_{uv}f = (R_vR_u^2R_v)^{1/2}f =(R_vf(R_uf)^2R_vf)^{1/2} =(R_{v\alpha}R_{u\alpha}^2R_{v\alpha})^{1/2} =R_{u\alpha v\alpha}$.
\end{proof}

We will eventually show that there is only one isomorphism type of $A=\rmlt{Q}\rtimes\langle J\rangle$ for nonassociative right Bruck loops of order $pq$, from which we deduce via Proposition \ref{Pr:BruckIsom} that there is only one nonassociative right Bruck loop of order $pq$.

\section{A first glance at Bol loops of order $pq$}\label{Sc:First}

Let $p>q$ be odd primes and let $Q$ be a right Bol loop of order $pq$. Many results of \cite{NiederreiterRobinson_pq} on $Q$ were obtained under the assumption that the unique subloop of order $p$ is normal in $Q$. Crucially, the normality assumption was used in \cite{NiederreiterRobinson_pq} to prove that $Q=\setof{b^ja^i\cdot b^j}{0\le i<q,\,0\le j<p}$, where $a\in Q$ is an element of order $q$ and $b\in Q$ is an element of order $p$.

In this section we improve upon several results of \cite{NiederreiterRobinson_pq} by removing the normality assumption. Moreover, we then prove in Theorem \ref{Th:Normal_p} that the unique subloop of order $p$ is normal in $Q$. (In Section \ref{Sc:Second} we give another proof of this fact, and we also show that $q$ divides $p^2-1$ when $Q$ is nonassociative.) We purposely give an argument that is independent of \cite{NiederreiterRobinson_pq} because the normality assumption is interwoven into proofs of \cite{NiederreiterRobinson_pq}.

\subsection{Uniqueness of a subloop of order $p$ in right Bol loops of order $pq$}

\begin{lemma}\label{Lm:RightPA}
Let $p>q$ be odd primes and let $Q$ be a right power alternative loop of order $pq$. Then $Q$ has at most one subloop of order $p$.
\end{lemma}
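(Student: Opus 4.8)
The plan is to first determine the isomorphism type of any subloop of order $p$, and then to exclude a second one by a free-action counting argument. The natural obstacle is that loops need not satisfy Lagrange's theorem, so there is no immediate bound on the number (or size of the union) of subloops of order $p$; the idea is to manufacture a genuine group action from the right translations and count orbits. Throughout I would use only the consequences of right power alternativity already recorded in the excerpt: $R_u^i = R_{u^i}$, the right inverse property, $|R_u|=|u|$, and the fact that $|u|$ divides $|Q|=pq$.

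First I would show that every subloop $P$ of order $p$ is cyclic of order $p$, generated by any of its nonidentity elements. Fix $a\in P$ with $a\neq 1$. Since $P$ is a subloop containing $a$, the translation $R_a$ restricts to a permutation of the $p$-element set $P$, and this restriction is fixed-point-free because $va=v$ forces $a=1$. The orbit of $1$ under $R_a|_P$ is $\{1,a,a^2,\dots\}=\langle a\rangle$, using $1R_a^i = 1R_{a^i}=a^i$, so $|a|\le |P|=p$; as $|a|$ divides $pq$ this leaves $|a|\in\{q,p\}$. If $|a|=q$, then $R_a|_P$ is a fixed-point-free permutation of prime order $q$ on $p$ points, hence a product of $q$-cycles, forcing $q\mid p$ — impossible since $q<p$ are primes. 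Thus $|a|=p$, the orbit of $1$ is all of $P$, and $P=\langle a\rangle\cong\Z_p$. In particular, since a subloop of a group of prime order is trivial or the whole group, two distinct subloops of order $p$ meet only in $1$.

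For the main step I would argue by contradiction: suppose $P_1=\langle a\rangle$ and $P_2$ are distinct subloops of order $p$, so $P_1\cap P_2=\{1\}$. The cyclic group $\langle R_a\rangle$ has order $|a|=p$ and acts on $Q$, and every nonidentity power $R_a^i=R_{a^i}$ is fixed-point-free; hence the action is free, partitioning $Q$ into exactly $pq/p=q$ orbits. The \emph{key step} is to show each orbit meets $P_2$ in at most one point: if $y_1,y_2\in P_2$ share an orbit, then $y_2=y_1R_a^k=y_1a^k$ for some $0<k<p$, so $a^k=y_1\backslash y_2\in P_2$ (a subloop is closed under left division), while $a^k\in P_1$; thus $a^k\in P_1\cap P_2=\{1\}$, contradicting $|a|=p$. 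Therefore $|P_2|\le q<p$, the desired contradiction. The routine parts are the cyclicity in the first step and the fixed-point-freeness verifications; the crux — and the point where the absence of Lagrange is genuinely circumvented — is this orbit count, where the freeness of $\langle R_a\rangle$ (giving precisely $q$ orbits) collides with the hypothesis $p>q$.
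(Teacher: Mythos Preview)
Your proof is correct and follows essentially the same approach as the paper: both argue that the free action of $\langle R_a\rangle$ (for $a$ of order $p$) on $Q$ has orbits meeting a second order-$p$ subloop in at most one point, and derive a contradiction from $p>q$; you phrase the count as $|P_2|\le q<p$, while the paper equivalently phrases it as $|Q|\ge p^2>pq$. Your preliminary step showing that any subloop of order $p$ is cyclic (hence contains an element of order $p$) is more careful than the paper, which tacitly assumes this when it begins with two elements of order $p$.
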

\begin{proof}
Let $u$, $v$ be two elements of order $p$ in $Q$ such that $\langle u\rangle \cap \langle v \rangle = 1$. Let us consider the orbits $O(w)$ of $w$ under $R_u$. We claim that for every $k$ we have $O(v^k)\cap \langle v\rangle = \{v^k\}$. Indeed, if $v^j\in O(v^k)$ then $v^j = v^k R_u^i = v^k R_{u^i} = v^k u^i$ for some $i$, so $u^i = v^k\backslash v^j = v^{j-k} \in \langle u \rangle \cap \langle v\rangle = 1$, so we can assume without loss of generality that $i=0$ and $k=j$. Because $Q$ is right power alternative, all orbits of $R_u$ have length $|u|$. It follows that $R_u$ has at least $p$ distinct orbits of length $p$, so $|Q|\ge p^2>pq$, a contradiction.
\end{proof}

\begin{corollary}[{\cite[Theorem 1]{NiederreiterRobinson_pq}}]\label{Cr:NiRo}
Let $p>q$ be odd primes and let $Q$ be a right Bol loop of order $pq$. Then $Q$ contains a unique subloop of order $p$. When $Q$ is nonassociative, then all nonidentity elements of $Q$ are of order $p$ or $q$.
\end{corollary}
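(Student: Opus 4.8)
The statement packages three claims---uniqueness of a subloop of order $p$, existence of such a subloop, and a restriction on element orders in the nonassociative case---and the plan is to dispose of the first and third quickly and then concentrate on existence.

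Uniqueness is immediate: a right Bol loop is right power alternative, so Lemma \ref{Lm:RightPA} applies and gives at most one subloop of order $p$. For the order restriction, recall that $|u|$ divides $|Q|=pq$ for every $u$, so a nonidentity element has order $p$, $q$, or $pq$. If some $u$ had order $pq$, then the power-associative subloop $\langle u\rangle$ would be a cyclic group of order $pq=|Q|$, whence $Q=\langle u\rangle$ would itself be associative. Thus in the nonassociative case only the orders $p$ and $q$ occur.

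The substantive step is to upgrade Lemma \ref{Lm:RightPA} from ``at most one'' to ``exactly one'' by producing an element of order $p$, which then generates a subloop of order $p$. The route I would take is through the associated right Bruck loop. Since $|Q|=pq$ is odd, every element has odd order, and for each $v\in Q$ the element $v^{(|v|+1)/2}\in\langle v\rangle$ squares to $v$; hence the squaring map is onto and therefore, $Q$ being finite, a bijection, so $Q$ is uniquely $2$-divisible. Proposition \ref{Pr:AssociatedBruck} then furnishes the associated right Bruck loop $(Q,\circ)$, which lives on the same set (so has odd order $pq$) and---crucially---has the same powers, hence the same element orders, as $(Q,\cdot)$. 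Applying Glauberman's Cauchy theorem, Theorem \ref{Th:Gl}(iv), to $(Q,\circ)$ produces an element of order $p$ in $(Q,\circ)$, which is an element of order $p$ in $(Q,\cdot)$. This argument is insensitive to whether $Q$ is associative, so it subsumes the classical Sylow argument for the group case in a single stroke.

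The hard part is exactly this existence step. One is tempted to avoid the Bruck-loop machinery by counting: in the nonassociative case, were there no element of order $p$, all $pq-1$ nonidentity elements would have order $q$ and would partition into cyclic subgroups of order $q$ meeting trivially, forcing $(q-1)\mid(pq-1)$. But $pq-1\equiv p-1\pmod{q-1}$, so this is contradictory only when $q-1\nmid p-1$, and the remaining cases still demand a genuine Cauchy-type input. This is why I would route existence through the associated Bruck loop and Theorem \ref{Th:Gl}(iv) rather than through elementary counting.
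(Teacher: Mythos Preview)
Your proof is correct and follows the same approach as the paper: uniqueness via Lemma~\ref{Lm:RightPA}, existence of an element of order $p$ via Glauberman's Theorem~\ref{Th:Gl}(iv), and the order restriction by ruling out $pq$ as an element order in the nonassociative case. The paper's proof simply cites Theorem~\ref{Th:Gl} directly for the existence step, leaving implicit the passage through the associated right Bruck loop that you spell out carefully (and which is indeed needed, since Theorem~\ref{Th:Gl} is stated only for Bruck loops); your added remark about why the naive counting argument falls short is a nice touch but not part of the paper's proof.
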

\begin{proof}
By Lemma \ref{Lm:RightPA}, there is at most one subloop of order $p$ in $Q$. By Theorem \ref{Th:Gl}, $Q$ contains an element of order $p$. Hence $Q$ contains a unique subloop $S$ of order $p$.

Suppose that $Q$ is nonassociative. In any right Bol loop the order of an element is a divisor of the order of the loop. If $Q$ contains an element of order $pq$, it is isomorphic to $\Z_{pq}$, a contradiction. Hence all elements of $Q\setminus S$ have order $q$.
\end{proof}

\begin{lemma}\label{Lm:Normal_q}
Let $p>q$ be odd primes and suppose that $Q$ is a right Bol loop containing a normal subloop of order $q$. Then $Q$ is associative.
\end{lemma}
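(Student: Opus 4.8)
The plan is to argue by contradiction: assume $Q$ is nonassociative and show that it must collapse to the cyclic group $\Z_{pq}$. By Corollary \ref{Cr:NiRo}, $Q$ then contains a unique subloop $S$ of order $p$ and every nonidentity element has order $p$ or $q$; in particular the given normal subloop $N$ of order $q$ is the group $\Z_q$, and the quotient $Q/N$ is a loop of order $p$, hence the group $\Z_p$. I would fix $b\in Q$ of order $p$, so that $S=\langle b\rangle$ and $N\cap S=1$, and record the coset decomposition
\[
    Q = \bigsqcup_{j=0}^{p-1} Nb^j,
\]
where the $Nb^j$ are pairwise distinct because their images generate $Q/N\cong\Z_p$. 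Since $R_b$ descends to right translation by $Nb$ in the group $Q/N\cong\Z_p$, it permutes the $p$ cosets in a single $p$-cycle, and $R_b^p = R_{b^p} = \mathrm{id}$ by the right power alternative property.

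Next I would analyze how $Q$ conjugates its normal subgroup $N$. For $n\in N$ we have $nb\in Nb = bN$, so there is a unique $\beta(n)\in N$ with $nb = b\,\beta(n)$, and $\beta$ is a permutation of $N$. The crux of the argument — and the step I expect to be the main obstacle — is to show that the conjugation action of $Q$ on $N$ factors through an honest group homomorphism $\phi\colon Q/N\to\aut{N}$, with $\phi(Nb)=\beta$. This is exactly where nonassociativity bites: in a general loop, conjugation by a fixed element need not even be an endomorphism of a normal subloop, so proving $\beta(n_1n_2)=\beta(n_1)\beta(n_2)$ (and well-definedness on cosets) must genuinely use the right Bol identity \eqref{Eq:RBol} together with the facts that $N$ is itself associative, being a group, and that the inner mappings preserve the normal subloop $N$. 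Granting this, $\phi$ is a homomorphism from $\Z_p$ into $\aut{N}\cong\aut{\Z_q}\cong\Z_{q-1}$; since $p>q>q-1$, the prime $p$ does not divide $q-1$, so $\phi$ is trivial and $Q$ acts trivially on $N$. In particular $nb=bn$ for every $n\in N$, and the same holds with $b$ replaced by any element of $Q$.

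Finally I would assemble the trivial action into full associativity. The triviality of $\phi$ places $N$ in the center of $Q$ (it commutes and associates with everything), so $Q$ is a central extension of the cyclic group $Q/N\cong\Z_p$ by $N\cong\Z_q$; equivalently, $N$ and the normal subloop $S$ have trivial intersection with $N S=Q$, and the standard direct-product argument for central normal subloops with trivial intersection yields $Q\cong N\times S\cong\Z_q\times\Z_p\cong\Z_{pq}$, a group. This contradicts the assumption that $Q$ is nonassociative, so $Q$ is associative. I would stress in the write-up that the hypothesis $p>q$ enters decisively through the divisibility fact $p\nmid(q-1)$: had the normal subloop been the \emph{larger} prime $p$, the action $\Z_q\to\aut{\Z_p}\cong\Z_{p-1}$ could be nontrivial when $q\mid p-1$, which is precisely the mechanism producing the nonassociative examples classified later.
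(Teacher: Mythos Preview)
Your approach has a genuine gap at the step you yourself flag as the ``main obstacle.'' In a right Bol loop, the inner mapping $T_b$ restricted to a normal subloop $N$ is \emph{not} known to be an automorphism of $N$; the paper later proves a result of this type (Lemma~\ref{Lm:Drapal}) only under the much stronger hypothesis $N\le\rnuc{Q}\cap\mnuc{Q}$, which you have no reason to assume here. Even granting that $\beta=T_b|_N\in\aut{N}$, you would still need the assignment $Nb\mapsto T_b|_N$ to be well-defined on cosets and to satisfy $\beta^p=\mathrm{id}$, neither of which follows from the right Bol identity in any evident way. The sketch ``must genuinely use \eqref{Eq:RBol} together with the fact that $N$ is associative'' is not an argument. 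Your final step also silently assumes that the order-$p$ subloop $S$ is normal (``the normal subloop $S$''); at this point in the paper that is precisely what has not yet been proved (it is Theorem~\ref{Th:Normal_p}, established later), so the ``standard direct-product argument'' is unavailable.

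The paper's proof bypasses all of this with a one-line order computation in the quotient. With $S=\langle a\rangle$ the normal subloop of order~$q$ and $b$ of order~$p$, Corollary~\ref{Cr:NiRo} forces every element outside $\langle b\rangle$ to have order~$q$; in particular $(ba)^q=1$. Passing to $Q/S$ gives $b^qS=(bS)^q=S$, so $b^q\in S$, contradicting $|b|=p$. No conjugation action, no automorphism group of $N$, and no normality of $S$ are needed: the whole argument is that the existence of an element of order~$q$ in the coset $bS$ forces $b^q\in S$.
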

\begin{proof}
Suppose that $Q$ is nonassociative. Let $S$ be a normal subloop of $Q$ of order $q$, say $S=\langle a\rangle$. By Corollary \ref{Cr:NiRo}, there is $b\in Q$ such that $|b|=p$, and every element of $Q\setminus\langle b\rangle$ has order $q$. Thus $1=(ba)^q\in (bS)^q = b^qS$, so $b^q\in S$, a contradiction with $|b|=p$.
\end{proof}

\subsection{Factorizations in right Bol loops of order $pq$}

\begin{lemma}\label{Lm:UniqueSolution}
Let $Q$ be a uniquely $2$-divisible right Bol loop. For all $v\in Q$ and $m,n\in\mathbb Z$ the equation $uv^n\cdot u = v^m$ has a unique solution $u = v^{(m-n)/2}$.
\end{lemma}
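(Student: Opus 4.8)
The plan is to linearize the quadratic-looking equation $uv^n\cdot u = v^m$ by passing to the right Bruck loop $(Q,\circ)$ associated with $(Q,\cdot)$ via Proposition~\ref{Pr:AssociatedBruck}, where the equation becomes a single left division and can be solved at once. The whole proof then rests on one reformulation together with the injectivity of the squaring map.

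First I would set up notation for the half-integer exponent. Since $Q$ is uniquely $2$-divisible, the element $w=v^{1/2}$ exists, and by power associativity $\langle w\rangle$ is a cyclic (hence associative and commutative) subloop containing $v=w^2$. Accordingly I interpret $v^{(m-n)/2}$ as $w^{m-n}$, so that $v^k=w^{2k}$ for every $k\in\Z$; this handles the bookkeeping of half-integer powers uniformly from the start.

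The crucial observation is that the left-hand side $uv^n\cdot u$ is itself a $\circ$-square. Using the formula $x\circ y=(yx^2\cdot y)^{1/2}$ of Proposition~\ref{Pr:AssociatedBruck} with $x=w^n$ and $y=u$, one gets $(w^n\circ u)^2 = u\,(w^n)^2\cdot u = u\,w^{2n}\cdot u = uv^n\cdot u$. Since $v^m=w^{2m}=(w^m)^2$, the equation $uv^n\cdot u=v^m$ is therefore equivalent to $(w^n\circ u)^2=(w^m)^2$, and, because squaring is a bijection of $Q$, equivalent to $w^n\circ u=w^m$ in the loop $(Q,\circ)$. A loop equation of the form $a\circ u=b$ has exactly one solution, which already yields uniqueness. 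For existence I would note that, since powers in $(Q,\cdot)$ and $(Q,\circ)$ coincide and $\langle w\rangle$ is abelian, $\circ$ restricted to $\langle w\rangle$ agrees with $\cdot$ (for $a,b\in\langle w\rangle$ one has $a\circ b=(a^2b^2)^{1/2}=ab$); hence $w^n\circ w^{m-n}=w^m$, so $u=w^{m-n}=v^{(m-n)/2}$ is the solution.

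The only genuine obstacle is spotting the identification of $uv^n\cdot u$ with the $\circ$-square $(w^n\circ u)^2$; once that rewriting is in place, both uniqueness and existence are forced by unique $2$-divisibility and the loop axioms, with no further computation. As a sanity check one can verify existence directly inside the abelian group $\langle w\rangle$: $w^{m-n}\cdot w^{2n}\cdot w^{m-n}=w^{2m}=v^m$.
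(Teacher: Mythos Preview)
Your proof is correct and follows essentially the same approach as the paper: both reformulate $uv^n\cdot u=v^m$ as $(v^{n/2}\circ u)^2=v^m$ in the associated right Bruck loop and then invoke unique $2$-divisibility and the loop property to obtain uniqueness. Your version is simply more explicit about the bookkeeping with $w=v^{1/2}$ and about existence, which the paper dismisses as ``clear thanks to power associativity.''
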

\begin{proof}
That $u = v^{(m-n)/2}$ solves the equation is clear thanks to power associativity. For the uniqueness, consider the associated right Bruck loop $(Q,\circ)$ and note that the identity $uv^n\cdot u = v^m$ in $(Q,\cdot)$ is equivalent to the identity $(v^{n/2}\circ u)^2 = v^m$ in $(Q,\circ)$. The latter equation obviously has a unique solution $u$.
\end{proof}

\begin{proposition}[{compare \cite[Theorem 1]{NiederreiterRobinson_pq}}]\label{Pr:Factorizations}
Let $Q$ be a right Bol loop of order $pq$, where $p>q$ are odd primes. Then there exists an element $a\in Q$ of order $q$, an element $b\in Q$ of order $p$, and whenever $a$, $b$ are such elements, we have $Q=\setof{a^ib^j}{0\le i<q,\,0\le j<p} = \setof{b^ja^i}{0\le i<q,\,0\le j<p} = \setof{b^ja^i\cdot b^j}{0\le i<q,\,0\le j<p}$.
\end{proposition}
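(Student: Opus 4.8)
The plan is to reduce everything to distinctness of $pq$ products, dispatch the two one-sided factorizations by a short cancellation argument, and obtain the two-sided factorization by passing to the associated right Bruck loop.

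First I would assemble the structural facts. Since $|Q|=pq$ is odd and $Q$ is power associative, every $u$ equals $(u^{(|u|+1)/2})^2$, so squaring is surjective and hence bijective on the finite set $Q$; thus $Q$ is uniquely $2$-divisible and Proposition~\ref{Pr:AssociatedBruck} applies. By Corollary~\ref{Cr:NiRo}, $Q$ has a unique subloop $S$ of order $p$; as a right Bol loop of prime order it is $\Z_p$, so \emph{every} element $b$ of order $p$ generates $S$. The existence of such a $b$ and of an element $a$ of order $q$ follows by applying Theorem~\ref{Th:Gl}(iv) to the associated right Bruck loop $(Q,\circ)$, whose powers coincide with those of $(Q,\cdot)$. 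For any admissible pair $a$, $b$ the subloops $\langle a\rangle\cong\Z_q$ and $\langle b\rangle=S\cong\Z_p$ intersect trivially, since a common nonidentity element would have order both $p$ and $q$.

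Next I would prove $Q=\setof{a^ib^j}{0\le i<q,\,0\le j<p}$. By cardinality it suffices to show the $pq$ products are distinct. If $a^ib^j=a^{i'}b^{j'}$, I right-multiply by $b^{-j'}$: the right inverse property collapses the right-hand side to $a^{i'}$, while the right power alternative property gives $R_{b^j}R_{b^{-j'}}=R_b^{\,j-j'}=R_{b^{j-j'}}$, turning the left-hand side into $a^ib^{j-j'}$. Hence $b^{j-j'}=a^i\backslash a^{i'}=a^{i'-i}\in\langle a\rangle\cap S=\{1\}$, forcing $i\equiv i'\pmod q$ and $j\equiv j'\pmod p$. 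The factorization $Q=\setof{b^ja^i}{0\le i<q,\,0\le j<p}$ is obtained identically, right-multiplying $b^ja^i=b^{j'}a^{i'}$ by $a^{-i'}$.

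The hard part is the two-sided factorization $Q=\setof{b^ja^i\cdot b^j}{0\le i<q,\,0\le j<p}$, since a direct attack seems to need conjugation by powers of $b$ to behave well, i.e.\ normality of $S$, which is not yet available. I would instead use a square-root trick. Writing $i'\equiv 2^{-1}i\pmod q$ so that $a^i=(a^{i'})^2$, the defining identity $u\circ v=(vu^2\cdot v)^{1/2}$ of Proposition~\ref{Pr:AssociatedBruck} gives $b^ja^i\cdot b^j=b^j(a^{i'})^2\cdot b^j=(a^{i'}\circ b^j)^2$. Since $(i,j)\mapsto(i',j)$ is a bijection of the index set and squaring is a bijection of $Q$, distinctness of the elements $b^ja^i\cdot b^j$ is equivalent to distinctness of the elements $a^{i'}\circ b^j$. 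But this is precisely the first factorization for the right Bruck loop $(Q,\circ)$, which has the same powers, the same unique subloop $S$ of order $p$, and the same trivially intersecting cyclic subloops $\langle a\rangle$, $\langle b\rangle$; so the cancellation argument above, carried out in $(Q,\circ)$, yields $\setof{a^{i'}\circ b^j}{0\le i<q,\,0\le j<p}=Q$, and therefore $\setof{b^ja^i\cdot b^j}{0\le i<q,\,0\le j<p}=Q$.
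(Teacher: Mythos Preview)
Your argument is correct. For the existence of $a$ and $b$ and the two one-sided factorizations you do exactly what the paper does (right inverse property, right power alternativity, trivial intersection of $\langle a\rangle$ and $\langle b\rangle$).

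For the two-sided factorization your route differs from the paper's. The paper first manipulates right translations: from $R_{b^j}R_{a^i}R_{b^j}=R_{b^k}R_{a^\ell}R_{b^k}$ it extracts $a^i=b^{k-j}a^\ell\cdot b^{k-j}$, and then invokes the separately stated Lemma~\ref{Lm:UniqueSolution} (whose proof also passes to $(Q,\circ)$) to conclude $b^{k-j}=a^{(i-\ell)/2}$. You bypass both the translation algebra and the auxiliary lemma by observing directly that $b^ja^i\cdot b^j=(a^{i/2}\circ b^j)^2$ and reducing to the already-established one-sided factorization in the associated Bruck loop. Both approaches rest on the same identity $vu^2\cdot v=(u\circ v)^2$, but yours is more economical: it recycles the first factorization rather than proving a new uniqueness statement, and it avoids the intermediate step through right translations. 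The paper's packaging has the minor advantage that Lemma~\ref{Lm:UniqueSolution} is stated in a form reusable elsewhere, though in fact it is not used again.
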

\begin{proof}
In each case, it is enough to show that no two elements of the given form coincide, for then there are precisely $pq$ elements of that form. If $a^i b^j = a^k b^{\ell}$, then $a^i b^{j-\ell} = a^k$ by the right inverse property and so $b^{j-\ell} = a^i\backslash a^k = a^{k-i}$. Since $\langle a\rangle \cap \langle b\rangle = 1$, we must have $j=\ell$ and $k=i$. The argument for the form $b^ja^i$ is similar.

Suppose that $b^j a^i\cdot b^j = b^k a^{\ell} \cdot b^k$. Then $R_b^j R_a^i R_b^j = R_{b^ja^i\cdot b^j} = R_{b^ka^\ell\cdot b^k} = R_b^k R_a^{\ell} R_b^k$ and so $R_{a^i} = R_b^{k-j} R_a^{\ell} R_b^{k-j} = R_{b^{k-j}a^{\ell}\cdot b^{k-j}}$. Hence $a^i = b^{k-j}a^{\ell}\cdot b^{k-j}$. By Lemma \ref{Lm:UniqueSolution}, we have that $b^{k-j} = a^{(i-\ell)/2}$. As above, we conclude that $k=j$ and $i=\ell$.
\end{proof}

\begin{corollary}\label{Cr:Factorizations}
Let $p>q$ be odd primes and let $Q$ be a right Bol loop of order $pq$. Let $a\in Q$ be of order $q$ and $b\in Q$ of order $p$. Then $\rmlt{Q} = \langle R_a,\,R_b\rangle$.
\end{corollary}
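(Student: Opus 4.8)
The plan is to show directly that each right translation $R_w$, $w\in Q$, lies in $H:=\langle R_a,\,R_b\rangle$; since $\rmlt{Q}=\langle R_w\mid w\in Q\rangle$ and $H\le\rmlt{Q}$ holds trivially (as $R_a,R_b\in\rmlt{Q}$), this suffices. The crux is to select the \emph{right} factorization of $w$ from Proposition \ref{Pr:Factorizations}. Of the three factorizations established there, the asymmetric forms $w=a^ib^j$ and $w=b^ja^i$ are of little help, because there is no evident way to write $R_{a^ib^j}$ as a word in $R_a$ and $R_b$: right translation by a product is not the composite of the right translations of the factors. It is the palindromic factorization $w=b^ja^i\cdot b^j$ that linearizes under the right Bol identity, and this is exactly why that third factorization was proven.

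Concretely, I would argue as follows. Given $w\in Q$, Proposition \ref{Pr:Factorizations} provides $0\le i<q$ and $0\le j<p$ with $w=b^ja^i\cdot b^j$. Applying the translation form $R_uR_vR_u=R_{(uv)u}$ of \eqref{Eq:RBol} with $u=b^j$ and $v=a^i$ yields $R_w=R_{(b^ja^i)b^j}=R_{b^j}R_{a^i}R_{b^j}$. The right power alternative property $R_u^n=R_{u^n}$ then gives $R_{b^j}=R_b^{\,j}$ and $R_{a^i}=R_a^{\,i}$, so that $R_w=R_b^{\,j}R_a^{\,i}R_b^{\,j}\in H$. As $w$ was arbitrary, every generator of $\rmlt{Q}$ lies in $H$, whence $\rmlt{Q}=H=\langle R_a,\,R_b\rangle$.

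Once the palindromic factorization is in hand there is essentially no obstacle: the whole argument is a one-line computation combining the Bol identity with power alternativity. The only point requiring care — and the reason Proposition \ref{Pr:Factorizations} was stated with all three factorizations rather than just one — is recognizing that the symmetric word $b^ja^i\cdot b^j$ is precisely what makes $R_w$ collapse into a word in $R_a$ and $R_b$, while the asymmetric factorizations do not. I would make this choice explicit in the write-up so that the reader sees why the third factorization, and not the first two, is the operative one here.
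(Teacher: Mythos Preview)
Your argument is correct and coincides with the paper's own proof: write $w=b^ja^i\cdot b^j$ via Proposition~\ref{Pr:Factorizations}, apply the Bol identity $R_uR_vR_u=R_{(uv)u}$, and then use right power alternativity to obtain $R_w=R_b^{\,j}R_a^{\,i}R_b^{\,j}\in\langle R_a,R_b\rangle$. The only difference is expository---you add commentary on why the palindromic factorization is the operative one, which is fine but not needed for the formal proof.
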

\begin{proof}
By Proposition \ref{Pr:Factorizations}, every $u\in Q$ can be written as $u = b^ja^i\cdot b^j$ for some $i$, $j$. Then $R_u = R_{b^ja^i\cdot b^j} = R_{b^j}R_{a^i}R_{b^j} = R_b^jR_a^iR_b^j \in \langle R_a,\,R_b\rangle$ by \eqref{Eq:RBol} and the right power alternative property.
\end{proof}

\subsection{Complete mappings and right Bol loops of order $pq$}

Throughout this subsection, suppose that $p>q$ are odd primes, and let $Q$ be a right Bol loop of order $pq$. In addition, let $a\in Q$ be an element of order $q$, and $b\in Q$ an element of order $p$.

\begin{lemma}
For each $u\in Q$, the order of $u$ is equal to the order of $L_u$. In particular, $L_a^q = L_b^p = 1$.
\end{lemma}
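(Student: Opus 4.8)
The plan is to compare the cycle structure of the permutation $L_u$ with the factorization $pq$. First I would record the elementary observation that $|u|$ always divides $|L_u|$: since $1L_u^n=u^n$ by power associativity, the orbit of $L_u$ through the identity is exactly $\{1,u,\dots,u^{|u|-1}\}$, a cycle of length $|u|$. Hence it suffices to prove that \emph{every} orbit of $L_u$ has length $|u|$, for then $|L_u|$, being the least common multiple of the orbit lengths, equals $|u|$.

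The two structural inputs I would use are the following. By Lemma \ref{Lm:OrderLu}, the length of the orbit of $L_u$ through an arbitrary $c\in Q$ equals the order of $u$ in the loop isotope $(Q,\circ_c)$. Since the class of right Bol loops is closed under isotopy (cf. \cite{Robinson}), $(Q,\circ_c)$ is again a right Bol loop of order $pq$, so the order of any of its elements divides $pq$; consequently every orbit of $L_u$ has length in $\{1,p,q,pq\}$. Second, for $u\neq 1$ the map $L_u$ is fixed-point-free: $vL_u=v$ means $uv=v=1v$, and cancelling the bijection $R_v$ forces $u=1$.

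With these in hand I would finish by a short counting argument, splitting on $|u|\in\{1,p,q,pq\}$. The cases $|u|=1$ and $|u|=pq$ are immediate (in the latter the orbit through $1$ already exhausts $Q$, so $L_u$ is a single $pq$-cycle). Suppose $|u|=p$. Then $L_u$ is fixed-point-free with all orbit lengths in $\{p,q,pq\}$; a $pq$-orbit is impossible because the orbit through $1$ has length $p<pq$. Letting $\alpha$ and $\beta$ denote the numbers of orbits of lengths $p$ and $q$, we get $\alpha p+\beta q=pq$ with $\alpha\geq 1$. Reducing modulo $q$ gives $q\mid\alpha$, and since $0\le\alpha\le q$ this forces $\alpha=q$, $\beta=0$; thus every orbit has length $p$ and $|L_u|=p=|u|$. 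The case $|u|=q$ is symmetric, now with $\beta\geq 1$ forcing $\beta=p$, $\alpha=0$. This proves $|L_u|=|u|$ for all $u$, and in particular $L_a^q=L_b^p=1$.

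The main obstacle, and the only nonelementary point, is justifying that the orbit lengths of $L_u$ divide $pq$: because left translations in a right Bol loop are not power-alternative, one cannot reason about their orbits directly and must route through Lemma \ref{Lm:OrderLu} together with the isotopy-invariance of the right Bol property. Everything downstream of that reduction is elementary arithmetic.
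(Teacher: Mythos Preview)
Your proof is correct and follows essentially the same route as the paper's: both invoke Lemma~\ref{Lm:OrderLu} to read orbit lengths of $L_u$ as element orders in right Bol isotopes of $Q$ and then finish with the same $\alpha p+\beta q=pq$ counting. The only cosmetic difference is that you rule out orbit lengths $1$ and $pq$ directly (fixed-point-freeness and the size bound), whereas the paper first disposes of the associative case and then cites Corollary~\ref{Cr:NiRo} to restrict orbit lengths to $\{p,q\}$ from the outset.
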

\begin{proof}
The conclusion is certainly true when $Q$ is associative, so we can assume that $Q$ is not associative. Every loop isotope of $Q$ is then also a nonassociative right Bol loop of order $pq$. Thus for each $c\in Q$, Corollary \ref{Cr:NiRo} implies that each nonidentity element of $Q$ has order $q$ or $p$ in the isotope $(Q,\circ_c)$ defined by \eqref{Eq:Isotope}. By Lemma \ref{Lm:OrderLu} it follows that for each nonidentity element $u\in Q$, the orbits of $L_u$ each have length $q$ or $p$.

Now for $1\ne u\in Q$, $L_u$ has, say, $r$ orbits of length $q$ and $s$ orbits of length $p$. Then $rq + sp = pq$. If $r > 0$ and $s > 0$, then since $sp = (p-r)q$, it must be the case that $q$ divides $sp$ and hence $q$ divides $s$. But $s<q$, a contradiction. Therefore either $r=0$ or $s=0$. Thus $L_u$ has order $q$ or order $p$, and this coincides with the order of $u$ which is the length of the orbit of $L_u$ through $1$.
\end{proof}

\begin{lemma}\label{Lm:Tb}
For all $u\in Q$, $\langle b\rangle T_u \subseteq \langle b\rangle$.
\end{lemma}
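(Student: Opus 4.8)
The plan is to show that each power $b^j$ is sent by $T_u$ into the unique subloop of order $p$, and the natural way to detect membership in that subloop is through the \emph{order} of the image. Since $Q$ is power associative, any element of order $p$ generates a subloop of order $p$, and by Corollary~\ref{Cr:NiRo} together with Lemma~\ref{Lm:RightPA} there is only one such subloop, namely $\langle b\rangle$; hence every element of order $p$ in $Q$ already lies in $\langle b\rangle$. Thus it suffices to prove that $b^jT_u$ has order dividing $p$ for every $j$ and every $u\in Q$: if the order is $p$ the image lies in $\langle b\rangle$, and if the order is $1$ then $b^jT_u=1\in\langle b\rangle$. (The case $j\equiv 0\pmod p$ is immediate, since $1T_u = 1R_uL_u^{-1} = u\backslash u = 1$.)

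The order computation is exactly what Proposition~\ref{Pr:T} is designed for. Taking $v=b^j$ and $k=p$ there gives
\[
    (b^jT_u)^p = (u^{-1}\cdot (u^2)L_{b^j}^p)u^{-1}.
\]
Now I would invoke the preceding lemma, which asserts that the order of a left translation equals the order of the element: since $|b^j|$ divides $p$, we get $L_{b^j}^p=1$, so $(u^2)L_{b^j}^p=u^2$. Substituting and simplifying by power associativity ($u^{-1}\cdot u^2 = u$) and the right inverse property ($u\cdot u^{-1}=1$) yields $(b^jT_u)^p=(u^{-1}\cdot u^2)u^{-1}=u\cdot u^{-1}=1$. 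Hence $b^jT_u$ has order $1$ or $p$, and the reduction above finishes the proof.

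I expect no serious obstacle here: the argument is almost entirely bookkeeping once Proposition~\ref{Pr:T} is in hand. The only point requiring a moment's care is the reduction step, namely recognizing that membership of $b^jT_u$ in $\langle b\rangle$ can be certified purely by the order of $b^jT_u$. This relies on the uniqueness of the order-$p$ subloop and on power associativity to guarantee that a single element of order $p$ already generates all of $\langle b\rangle$. Everything else is a direct evaluation of Proposition~\ref{Pr:T} at $k=p$ using $L_{b^j}^p=1$.
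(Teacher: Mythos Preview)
Your proof is correct and follows essentially the same approach as the paper: apply Proposition~\ref{Pr:T} with $k=p$, use $L_{b^j}^p=1$ from the preceding lemma, and conclude via the uniqueness of the order-$p$ subloop. The only cosmetic difference is that the paper first reduces to the case $j=1$ (noting that any nonidentity $b^j$ generates $\langle b\rangle$) before computing $(bT_u)^p=1$, whereas you handle all $b^j$ at once.
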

\begin{proof}
Since $\langle b\rangle$ is generated by any of its nonidentity elements, it is enough to show $bT_u\in \langle b\rangle$. By Proposition \ref{Eq:T}, we have
\[
    (bT_u)^p = (u^{-1}\cdot (u^2)L_b^p)u^{-1} = u^{-1}u^2\cdot u^{-1} = 1.
\]
Thus $bT_u$ has order dividing $p$. Since $\langle b\rangle$ is the unique subloop of order $p$ by Corollary \ref{Cr:NiRo}, we have $bT_u\in \langle b\rangle$ as claimed.
\end{proof}

Let us now apply Lemma \ref{Lm:Tb} and derive a multiplication formula for $Q$. For every $0\le i<q$ and $0\le j<p$, there exists $0\le j'<p$ such that
\begin{equation}\label{Eq:Theta}
    b^j T_{a^i} = a^i\backslash (b^ja^i) = b^{j'}.
\end{equation}
This gives rise to mappings $\theta_i:\Z_p\to \Z_p$, $j\mapsto j'=j\theta_i$.

A bijection $f$ of a group $(G,+)$ is a \emph{complete mapping} if the mapping $x\mapsto xf+x$ is also a bijection of $G$.

\begin{lemma}\label{Lm:BasicThetaProperties}
For every $0\le i<q$, the mappings $\theta_i$ defined by \eqref{Eq:Theta} are complete mappings of $\Z_p$. Moreover, $\theta_0$ is the identity mapping and $0\theta_i=0$ for every $i\in \Z_q$.
\end{lemma}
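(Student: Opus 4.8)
The two ``moreover'' assertions I would dispose of first, since they are immediate from the definition \eqref{Eq:Theta}. For $\theta_0$, note that $T_{a^0}=T_1=R_1L_1^{-1}$ is the identity map of $Q$, so $b^jT_1=b^j$ and $\theta_0$ fixes every $j$. For $0\theta_i=0$, observe that $1T_{a^i}=(1\cdot a^i)L_{a^i}^{-1}=a^i\backslash a^i=1=b^0$, whence $0\theta_i=0$ for all $i\in\Z_q$. So the real content is the complete mapping property.

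Next I would record that each $\theta_i$ is at least a bijection of $\Z_p$. Indeed $T_{a^i}=R_{a^i}L_{a^i}^{-1}$ is a bijection of $Q$, and by Lemma \ref{Lm:Tb} it carries the finite set $\langle b\rangle$ into itself; an injection of a finite set into itself is onto, so $T_{a^i}$ restricts to a bijection of $\langle b\rangle$, and hence $\theta_i\in\mathrm{Sym}(\Z_p)$. It remains to show that the map $j\mapsto j+j\theta_i$ is also a bijection, which is precisely the completeness condition.

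The plan for completeness is to re-express the ``middle factorization'' element $b^ja^i\cdot b^j$ of Proposition \ref{Pr:Factorizations} in terms of $\theta_i$. Write $d_j=b^jT_{a^i}=b^{j\theta_i}\in\langle b\rangle$, so that by \eqref{Eq:Theta} we have $a^i\cdot d_j=b^ja^i$. The crucial move is that, although one cannot reassociate freely in a Bol loop, the right power alternative property yields $R_{d_j}R_{b^j}=R_b^{j\theta_i}R_b^{j}=R_b^{j\theta_i+j}=R_{d_jb^j}$, because $d_j$ and $b^j$ are both powers of $b$. Applying this operator identity to $a^i$ gives
\[
    b^ja^i\cdot b^j = (a^id_j)b^j = a^i(d_jb^j) = a^i\cdot b^{\,j+j\theta_i}.
\]
Since the $p$ elements $\{b^ja^i\cdot b^j:0\le j<p\}$ are pairwise distinct by Proposition \ref{Pr:Factorizations} and $L_{a^i}$ is a bijection, the elements $b^{\,j+j\theta_i}$ are pairwise distinct; hence $j\mapsto j+j\theta_i$ is injective, and therefore bijective, on $\Z_p$. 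Thus $\theta_i$ is a complete mapping.

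The main obstacle, and the only genuinely delicate point, is the middle equality $(a^id_j)b^j=a^i(d_jb^j)$: this sort of reassociation is \emph{false} in a general right Bol loop, and the whole argument hinges on the fact that it is nonetheless valid here because $d_j$ and $b^j$ lie in the cyclic (hence associative) subloop $\langle b\rangle$, where right power alternativity collapses $R_{d_j}R_{b^j}$ to $R_{d_jb^j}$. Recognizing that the distinctness of the middle factorization is exactly the completeness statement in disguise is the crux; everything else is bookkeeping.
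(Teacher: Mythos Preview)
Your proof is correct and follows essentially the same route as the paper: both arguments use Lemma~\ref{Lm:Tb} to get that $\theta_i$ is a bijection, then compute $b^ja^i\cdot b^j = a^ib^{j\theta_i+j}$ via right power alternativity in $\langle b\rangle$, and finally invoke the distinctness in Proposition~\ref{Pr:Factorizations} to conclude that $j\mapsto j+j\theta_i$ is a permutation. The only cosmetic difference is that the paper appeals to Proposition~\ref{Pr:Factorizations} a second time (uniqueness of the form $a^ib^\ell$) where you instead use injectivity of $L_{a^i}$, but this is the same content.
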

\begin{proof}
By Lemma \ref{Lm:Tb}, $\theta_i$ is a permutation of $\Z_p$. Next,
\[
    b^j a^i \cdot b^j = a^i b^{j\theta_i}\cdot b^j = a^i b^{j\theta_i + j}.
\]
By Proposition \ref{Pr:Factorizations}, the left hand side accounts for $p$ distinct elements of $Q$ as $j$ varies over $\mathbb Z_p$, and hence so does the right hand side. Then Proposition \ref{Pr:Factorizations} implies that $j\mapsto j + j\theta_i$ is a permutation of $\Z_p$.

Since $b^j = b^j a^0 = a^0 b^{j\theta_0} = b^{j\theta_0}$, we must have $\theta_0=1$. Similarly, $a^i = b^0a^i = a^i b^{0\theta_i}$ implies $0\theta_i=0$.
\end{proof}

\begin{proposition}\label{Pr:MultiplicationFormula}
The multiplication in $Q$ is uniquely determined by the complete mappings $\theta_i: \Z_p\to \Z_p$ defined by \eqref{Eq:Theta}. In particular, for $0\le i$, $k<q$ and $0\le j$, $\ell<p$, we have
\begin{equation}\label{Eq:Mult}
    a^i b^j \cdot a^k b^{\ell} = a^{i+k} b^{m + (j+m)\theta_i^{-1}\theta_{i+k}},
\end{equation}
where $m + m\theta_k = \ell$.
\end{proposition}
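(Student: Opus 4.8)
The plan is to compute the product $a^ib^j\cdot a^kb^\ell$ directly by reducing every factor to the canonical form $a^*b^*$ using the defining relation \eqref{Eq:Theta}, namely $b^s T_{a^i}=a^i\backslash(b^s a^i)=b^{s\theta_i}$, which equivalently says $b^s a^i=a^i b^{s\theta_i}$. This single identity is the engine: it lets us commute a power of $b$ past a power of $a$ at the cost of applying $\theta_i$. Since Proposition \ref{Pr:Factorizations} guarantees that every element of $Q$ has a \emph{unique} expression as $a^xb^y$ with $0\le x<q$, $0\le y<p$, it suffices to massage the left-hand side into that form; the resulting exponents are then forced, and this also establishes that the $\theta_i$ determine the multiplication completely.

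First I would handle the inner factor $a^kb^\ell$. I want to rewrite $b^\ell$ in a way compatible with the outer $a^i$, so I introduce $m$ defined by $m+m\theta_k=\ell$; this $m$ exists and is unique precisely because $j\mapsto j+j\theta_k$ is a permutation of $\Z_p$ by Lemma \ref{Lm:BasicThetaProperties}. The point of this substitution is that $a^k b^\ell=a^k b^{m+m\theta_k}$, and using $b^m a^k=a^k b^{m\theta_k}$ one recognizes $a^kb^{m+m\theta_k}=b^m a^k\cdot b^m$ — i.e.\ the inner factor is exactly of the symmetric twisted-product shape $b^m a^k\cdot b^m$ from Proposition \ref{Pr:Factorizations}. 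This is the step that motivates why $m$ (rather than $\ell$) is the natural parameter in \eqref{Eq:Mult}.

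Next I would use the right Bol structure to move $b^m$ out to the left. Writing the full product and applying the relation $b^s a^i=a^ib^{s\theta_i}$ repeatedly, together with \eqref{Eq:RBol} in its translation form $R_{b^j}R_{a^i}R_{b^j}=R_{b^ja^i\cdot b^j}$ and the right power alternative property, I expect $a^ib^j\cdot(b^m a^k\cdot b^m)$ to collapse to $a^{i+k}b^{\,?}$ after one commutes the accumulated powers of $b$ through $a^{i+k}$. Tracking the $b$-exponent carefully: the $b^j$ and $b^m$ on the left combine, get pushed past $a^i$ producing a $\theta_i$, then the whole thing interacts with $a^k$ so that the cumulative twist is governed by $\theta_i^{-1}\theta_{i+k}$, yielding the exponent $m+(j+m)\theta_i^{-1}\theta_{i+k}$. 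The composition law $\theta_i$ followed by the $a^k$-shift must telescope into $\theta_{i+k}$ — this is where I expect the arithmetic to be delicate and is the main obstacle: verifying that the twist exponents compose as $\theta_i^{-1}\theta_{i+k}$ rather than some other combination. I would pin this down by checking it against the already-established identity $R_{b^j}R_{a^i}R_{b^j}=R_{a^i b^{j\theta_i+j}}$ (from the proof of Lemma \ref{Lm:BasicThetaProperties}) and against the associativity-forced consistency of the canonical form, rather than by brute expansion. Once the exponent is confirmed, uniqueness of the canonical form in Proposition \ref{Pr:Factorizations} finishes the proof.
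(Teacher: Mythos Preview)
Your proposal is correct and follows essentially the same route as the paper: introduce $m$ via $m+m\theta_k=\ell$, rewrite $a^kb^\ell = b^ma^k\cdot b^m$, apply \eqref{Eq:RBol} to get $((a^ib^j\cdot b^m)a^k)\cdot b^m$, and then shuttle powers of $b$ across powers of $a$ using the relation $b^s a^i = a^i b^{s\theta_i}$. The step you flag as ``delicate'' is in fact a clean direct chain in the paper---write $a^ib^{j+m}=b^{(j+m)\theta_i^{-1}}a^i$, use right power alternativity to get $(b^{(j+m)\theta_i^{-1}}a^i)a^k=b^{(j+m)\theta_i^{-1}}a^{i+k}$, and commute once more through $a^{i+k}$---so the indirect consistency check you propose is unnecessary.
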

\begin{proof}
Fix $i$, $k\in \Z_q$ and $j$, $\ell\in \Z_p$, and let $m\in \Z_p$ be the unique element satisfying $m + m\theta_k = \ell$. Then
\begin{align*}
    a^i b^j \cdot a^k b^{\ell} &= a^i b^j \cdot a^k b^{m + m\theta_k} = a^i b^j\cdot (a^k b^{m\theta_k}\cdot b^m) = a^i b^j\cdot (b^m a^k \cdot b^m) \\
        &= (a^i b^j\cdot b^m)a^k \cdot b^m = (a^i b^{j+m})a^k\cdot b^m = (b^{(j+m)\theta_i^{-1}} a^i)a^k\cdot b^m \\
        &= b^{(j+m)\theta_i^{-1}} a^{i+k}\cdot b^m = a^{i+k} b^{(j+m)\theta_i^{-1}\theta_{i+k}}\cdot b^m = a^{i+k} b^{(j+m)\theta_i^{-1}\theta_{i+k} + m}.
\end{align*}
\end{proof}

\begin{theorem}\label{Th:Normal_p}
Let $p>q$ be odd primes and let $Q$ be a right Bol loop of order $pq$. Then the unique subloop of order $p$ is normal in $Q$.
\end{theorem}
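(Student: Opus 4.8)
The plan is to exhibit an explicit surjective loop homomorphism $\pi\colon Q\to\Z_q$ whose kernel is the unique subloop $\langle b\rangle$ of order $p$; since kernels of loop homomorphisms are normal subloops, this immediately yields the theorem. All the structural input needed is already available: the factorization of Proposition \ref{Pr:Factorizations} and the multiplication formula \eqref{Eq:Mult}.

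First I would recall that by Proposition \ref{Pr:Factorizations} every element of $Q$ can be written \emph{uniquely} as $a^ib^j$ with $0\le i<q$ and $0\le j<p$, so that the assignment $\pi\colon a^ib^j\mapsto i\in\Z_q$ is a well-defined map of $Q$ onto $\Z_q$. The key observation is that \eqref{Eq:Mult} computes the product $a^ib^j\cdot a^kb^\ell$ as $a^{i+k}b^{(\cdots)}$, so the exponent of $a$ in the product equals $i+k$ irrespective of $j$, $\ell$ and the complete mappings $\theta_i$. Hence $(a^ib^j\cdot a^kb^\ell)\pi=i+k=(a^ib^j)\pi+(a^kb^\ell)\pi$, that is, $\pi$ is a homomorphism onto the cyclic group $(\Z_q,+)$. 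Its kernel is $\setof{a^ib^j}{i=0}=\setof{b^j}{0\le j<p}=\langle b\rangle$, which is the unique subloop of order $p$ by Corollary \ref{Cr:NiRo}. Therefore $\langle b\rangle$ is normal in $Q$.

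The reason this argument is short is that the genuine work has been carried out upstream: the existence of the factorization $Q=\setof{a^ib^j}{0\le i<q,\,0\le j<p}$ and, above all, the derivation of \eqref{Eq:Mult}, which rests on Lemma \ref{Lm:Tb} (that $\langle b\rangle T_u\subseteq\langle b\rangle$ for every $u$) together with the uniqueness of the order-$p$ subloop. The only point that remains to be checked here is that the first coordinate of the product depends on the inputs additively and in no other way, and this is supplied directly by \eqref{Eq:Mult}; I therefore do not anticipate a serious obstacle. As an equivalent sanity check one can instead verify straight from \eqref{Eq:Mult} that the left and right cosets of $\langle b\rangle$ coincide, both being $\setof{a^ib^n}{0\le n<p}$, and that the product of any two such cosets lies in a single coset, so that the coset partition is a congruence with quotient $\Z_q$.
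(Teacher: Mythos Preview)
Your argument is correct and is essentially identical to the paper's own proof: the paper defines the map $\phi\colon a^ib^j\mapsto a^i$ (equivalently $a^ib^j\mapsto i\in\Z_q$), notes it is well-defined by Proposition~\ref{Pr:Factorizations}, and observes from the multiplication formula~\eqref{Eq:Mult} that it is a homomorphism with kernel $\langle b\rangle$. Your additional commentary about cosets is a harmless restatement of the same fact.
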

\begin{proof}
Let $a\in Q$ be an element of order $q$ and $b\in Q$ an element of order $p$. Define $\phi : Q\to \mathbb{Z}_q$ by $(a^ib^j)\phi = a^i$. It follows from Proposition \ref{Pr:Factorizations} that $\phi$ is well-defined, and from Proposition \ref{Pr:MultiplicationFormula} that $\phi$ is a homomorphism with kernel $\langle b\rangle$.
\end{proof}

Call a complete mapping $\theta:\Z_p\to \Z_p$ \emph{linear} if $(i+j)\theta = i\theta + j\theta$ for every $i$, $j\in \Z_p$. Equivalently, a complete mapping $\theta:\Z_p\to \Z_p$ is linear if there is $\lambda\in \Z_p\setminus\{0,-1\}$ such that $i\theta = \lambda\cdot i$ for every $i\in \Z_p$.

We record a useful corollary of Proposition \ref{Pr:MultiplicationFormula}.

\begin{corollary}\label{Cr:LinearAssociative}
If every $\theta_i$ is linear and we have $\theta_{i+k}=\theta_i\theta_k$ for every $i$, $k$, then $Q$ is associative.
\end{corollary}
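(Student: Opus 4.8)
The plan is to show that under the two hypotheses the multiplication formula \eqref{Eq:Mult} collapses to that of an ordinary semidirect product, which is automatically associative. First I would record the linear data: since each $\theta_i$ is linear, write $j\theta_i = \lambda_i j$ with $\lambda_i\in\Z_p\setminus\{0,-1\}$, and note $\lambda_0=1$ because $\theta_0$ is the identity (Lemma \ref{Lm:BasicThetaProperties}). The hypothesis $\theta_{i+k}=\theta_i\theta_k$ then reads $\lambda_{i+k}=\lambda_i\lambda_k$; that is, $i\mapsto\lambda_i$ is a homomorphism from $\Z_q$ into the multiplicative group $\Z_p^{\times}$.

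Next I would simplify \eqref{Eq:Mult}. Under the right-action convention, the composite $\theta_i^{-1}\theta_{i+k}$ acts as multiplication by $\lambda_i^{-1}\lambda_{i+k}=\lambda_k$, and the auxiliary element $m$ is determined by $m+m\theta_k=m(1+\lambda_k)=\ell$. Substituting into the exponent of $b$ in the product gives $m+(j+m)\lambda_k = m(1+\lambda_k)+j\lambda_k = \ell + j\lambda_k$, so \eqref{Eq:Mult} reduces to the clean rule $a^ib^j\cdot a^kb^\ell = a^{i+k}b^{\,\ell+j\lambda_k}$. This is precisely the multiplication in the semidirect product $\Z_p\rtimes\Z_q$ determined by the action $\lambda$, and being a group it is associative; hence $Q$ is associative.

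For a self-contained argument avoiding the explicit semidirect-product identification, I would instead verify the associative law directly from the reduced rule: expanding $((a^ib^j)(a^kb^\ell))(a^sb^t)$ and $(a^ib^j)((a^kb^\ell)(a^sb^t))$ both yield $a^{i+k+s}$ times a power of $b$, and comparing the two $b$-exponents shows they agree if and only if $\lambda_k\lambda_s=\lambda_{k+s}$. I do not expect a real obstacle here: the only points requiring care are the right-action convention when composing $\theta_i^{-1}\theta_{i+k}$ and the cancellation $m(1+\lambda_k)=\ell$, after which the homomorphism hypothesis $\lambda_{k+s}=\lambda_k\lambda_s$ enters at exactly the one spot where associativity could otherwise fail.
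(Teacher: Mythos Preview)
Your proposal is correct and follows essentially the same route as the paper: both simplify \eqref{Eq:Mult} using linearity and the hypothesis $\theta_{i+k}=\theta_i\theta_k$ to obtain $a^ib^j\cdot a^kb^\ell = a^{i+k}b^{\ell+j\theta_k}$, and then identify this with a semidirect product. The only cosmetic difference is that you introduce the scalar notation $\lambda_i$ whereas the paper keeps the maps $\theta_i$ throughout.
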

\begin{proof}
We have
\begin{displaymath}
    a^ib^j\cdot a^kb^\ell = a^{i+k}b^{m+(j+m)\theta_k} = a^{i+k} b^{m+m\theta_k+j\theta_k} = a^{i+k}b^{\ell+j\theta_k}
\end{displaymath}
with the usual convention on $m$, so $Q$ is isomorphic to the semidirect product $(\mathbb Z_q\times\mathbb Z_p,*)$ with associative multiplication
\begin{displaymath}
    (i,j)*(k,\ell) = (i+k,j\theta_k+\ell).
\end{displaymath}
\end{proof}

\section{The Bruck loops of order $pq$}

Let $p>q$ be odd primes. We prove that a nonassociative right Bruck loop of order $pq$ exists if and only if $q$ divides $p^2-1$, and in such case the loop is unique up to isomorphism. Our main tool is Proposition \ref{Pr:BruckIsom}, so we must first obtain some results on groups that can arise as $A=\rmlt{Q}\rtimes \langle J\rangle$.

\subsection{Dihedral groups in $GL(2,p)$}

For a group $N$, denote by $\hol{N}$ the holomorph of $N$, that is, the semidirect product $N\rtimes\aut{N}$ with natural action.

Consider now a semidirect product $G=N\rtimes H$ with conjugation action $\alpha:H\to\aut{N}$. If $\alpha$ is faithful (that is, injective), then $G$ embeds into $\hol{N}$ via the isomorphism $nh\mapsto (n,h\alpha)$.

The goal of this subsection is to show that if $G=N\rtimes H$, $N\cong \Z_p\times \Z_p$, $H\cong D_{2q}$ and $H$ acts faithfully by conjugation on $N$, then $q$ divides $p^2-1$ and $G$ is uniquely determined up to isomorphism. See Proposition \ref{Pr:gr_unique}.

We start by exhibiting a canonical copy of $(\Z_p\times \Z_p)\rtimes D_{2q}$ in $GL(2,p)$. Suppose for a while that $q$ divides $p^2-1$. Note that then $q$ either divides $p-1$ or $p+1$ but not both. Let $\omega$ be a primitive $q$th root of unity in $\F_{p^2}$. Clearly, $\omega\in \F_p$ if and only if $q$ divides $p-1$.

We define $\sigma$, $\tau \in GL(2,p)$ as follows. When $q$ divides $p-1$, let
\[
    \sigma = \begin{pmatrix} \omega&0\\0&\omega^{-1}\end{pmatrix},\quad \tau =\begin{pmatrix} 0&1\\1&0 \end{pmatrix}.
\]
When $q$ divides $p+1$, fix an $\F_p$-basis $B$ of $\F_{p^2}$, and let $\sigma$, $\tau$ be the matrices of the $\F_p$-linear maps $x\mapsto \omega x$, $x\mapsto x^p$ with respect to $B$, respectively. Straightforward calculation shows that in both cases $\sigma$, $\tau$ satisfy the relations $\sigma^q =\tau^2=(\sigma\tau)^2=1$. (When $q$ divides $p+1$, use $\omega^{p+1}=1$.)

Denote by $\Delta$ the subgroup $\langle \sigma, \tau \rangle$ of $GL(2,p)$ and note that $\Delta$ is isomorphic to the dihedral group $D_{2q}$.

\begin{proposition}\label{Pr:GL2p}
When $q$ does not divide $p^2-1$ then $GL(2,p)$ has no subgroup isomorphic to the dihedral group $D_{2q}$. When $q$ divides $p^2-1$ then any subgroup isomorphic to $D_{2q}$ is conjugate to $\Delta$ in $GL(2,p)$.
\end{proposition}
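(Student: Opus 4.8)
The plan is to treat the two statements separately: the first by a divisibility count, the second by pinning down the rotation subgroup up to conjugacy and then moving the reflection by a centralizing element.

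For the first claim, note that a copy of $D_{2q}$ inside $GL(2,p)$ would contain an element $\rho$ of order $q$. Since $|GL(2,p)| = (p^2-1)(p^2-p) = p(p-1)^2(p+1)$ and $q\ne p$ (because $p>q$), the existence of $\rho$ forces $q$ to divide $(p-1)^2(p+1)$; as $q$ is prime this gives $q\mid p-1$ or $q\mid p+1$, i.e. $q\mid p^2-1$. Contrapositively, if $q\nmid p^2-1$ there is no element of order $q$, hence no subgroup isomorphic to $D_{2q}$.

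For the second claim, assume $q\mid p^2-1$ and let $D\le GL(2,p)$ with $D\cong D_{2q}$. Let $C=\langle\rho\rangle$ be its unique (normal, cyclic) subgroup of order $q$, and let $s$ be any involution of $D$, so $s\rho s^{-1}=\rho^{-1}$. The first step is to determine $\langle\rho\rangle$ up to conjugacy. Since $\gcd(q,p)=1$, the element $\rho$ is semisimple with eigenvalues that are $q$th roots of unity in $\F_{p^2}$, and the relation $s\rho s^{-1}=\rho^{-1}$ shows $\rho$ and $\rho^{-1}$ are conjugate, hence have the same eigenvalue multiset. This forces the eigenvalues to be $\omega^a,\omega^{-a}$ for some $a\not\equiv 0\pmod q$ (using that $q$ is odd, so $\omega^a\ne\omega^{-a}$). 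Now $\sigma^a$ has exactly these eigenvalues and the same separable characteristic polynomial as $\rho$, so $\rho$ is conjugate to $\sigma^a$ in $GL(2,p)$: when $q\mid p-1$ both are diagonalizable over $\F_p$, and when $q\mid p+1$ both have the same irreducible quadratic characteristic polynomial and so are conjugate to its companion matrix. Since $\gcd(a,q)=1$ gives $\langle\sigma^a\rangle=\langle\sigma\rangle$, I conclude $\langle\rho\rangle$ is conjugate to $\langle\sigma\rangle$; replacing $D$ by a conjugate, I may assume $C=\langle\sigma\rangle$.

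The second step handles the reflection: after the reduction, $s$ is an involution inverting $\sigma$, and I want to conjugate it to $\tau$ by an element of $C_{GL(2,p)}(\sigma)$, which fixes $\langle\sigma\rangle$ and therefore carries $D=\langle\sigma,s\rangle$ onto $\Delta=\langle\sigma,\tau\rangle$. The set of all $g$ with $g\sigma g^{-1}=\sigma^{-1}$ is the single coset $\tau\,C_{GL(2,p)}(\sigma)$, since it is nonempty (it contains $\tau$ by construction) and is a coset of the centralizer. I would then identify the centralizer: when $q\mid p-1$, $\sigma$ is diagonal with distinct eigenvalues, so $C_{GL(2,p)}(\sigma)$ is the diagonal torus of order $(p-1)^2$; when $q\mid p+1$, $\sigma$ acts irreducibly and Schur's lemma yields $C_{GL(2,p)}(\sigma)\cong\F_{p^2}^{*}$ of order $p^2-1$. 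In each case a direct orbit--stabilizer computation shows this centralizer acts transitively by conjugation on the finite set $I$ of involutions inverting $\sigma$: in the diagonal case $I$ consists of the antidiagonal involutions and has size $p-1$ with scalar pointwise stabilizer; in the irreducible case $I$ corresponds to the norm-one elements ($\gamma^{p+1}=1$) and has size $p+1$, again with scalar stabilizer of order $p-1$, so the orbit has size $(p^2-1)/(p-1)=p+1=|I|$. Transitivity lets me conjugate $s$ to $\tau$ while fixing $\sigma$, completing the argument.

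I expect the crux to be the second step, specifically transitivity of $C_{GL(2,p)}(\sigma)$ on the involutions inverting $\sigma$ in the irreducible case $q\mid p+1$: one must recognize the centralizer as $\F_{p^2}^{*}$, compute $(\tau c)^2$ in field terms to see that the involution condition is exactly the norm equation $\gamma^{p+1}=1$, and then verify the orbit--stabilizer count $(p^2-1)/(p-1)=p+1$. By contrast, the first step is routine once one recalls that $\rho$ is semisimple and that the dihedral relation forces $\rho$ and $\rho^{-1}$ to be conjugate.
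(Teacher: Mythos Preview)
Your proof is correct and follows the same two-step plan as the paper: conjugate the rotation subgroup to $\langle\sigma\rangle$, then move the reflection to $\tau$ by an element of the centralizer via an orbit--stabilizer count. The only real difference lies in the first step. The paper locates the order-$q$ element by Sylow theory, arguing that a suitable abelian subgroup of $GL(2,p)$ (the diagonal torus when $q\mid p-1$, a Singer cycle $U\cong\F_{p^2}^{*}$ when $q\mid p+1$) has index coprime to $q$ and hence contains a Sylow $q$-subgroup; you instead match $\rho$ to $\sigma^{a}$ directly through their common separable characteristic polynomial, which gives a cleaner, more uniform argument across the two cases. In the $q\mid p+1$ case the paper quotes \cite{Huppert} for the structure of $U$ and its normalizer and shows that all involutions in $N_G(U)\setminus U$ are $U$-conjugate; this is exactly your computation $(\tau\rho_a)^2=\rho_{a^{p+1}}$ once one recognizes $C_{GL(2,p)}(\sigma)=U$, and your appeal to Schur's lemma makes that identification self-contained rather than relying on the external reference.
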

\begin{proof}
Let $G=GL(2,p)$ and note that $|G| = (p^2-1)(p^2-p) = (p-1)^2p(p+1)$. When $q$ does not divide $p^2-1$ then $q$ does not divide $|G|$, so $G$ certainly does not contain a subgroup of order $2q$. For the rest of the proof suppose that $q$ divides $p^2-1$, and let $\sigma$, $\tau$, $\Delta$ be as above. Let $S=\langle s,\,t\;:\; s^q=t^2=(st)^2=1 \rangle$ be a subgroup of $G=GL(2,p)$ isomorphic to $D_{2q}$.

Assume first that $q$ divides $p-1$. Define the subgroup
\[
    U=\left\{ \begin{pmatrix} a&0\\0&b \end{pmatrix}\;:\;a,\,b \in \mathbb{F}_p^* \right\}\cong \Z_{p-1}\times \Z_{p-1}
\]
of $G$. As $[G:U]=p(p+1)$ is coprime to $q$, any $q$-Sylow subgroup of $U$ is a $q$-Sylow subgroup of $G$. In particular, $s$ is conjugate to an element of $U$. We can therefore assume that, up to conjugacy, $s \in U$ and
\[
    s=\begin{pmatrix} \omega^i&0\\0&\omega^{j} \end{pmatrix}
\]
for the primitive $q$th root of unity $\omega$ in $\F_p$ fixed above and for some integers $i,j$. Since $s$ is not central in $G$, $i\neq j$, and $s$ has precisely two $1$-dimensional invariant subspaces, namely $V_1 = \langle (1,0)\rangle$ and $V_2=\langle (0,1)\rangle$. The relation $sts=t$ implies $V_i t=V_ists=V_its$, and thus $V_it\in\{V_1,V_2\}$. If $t$ fixes $V_1$, $V_2$ then $S$ is contained in the abelian group $\F_p^*\times \F_p^*$, a contradiction. Therefore, $t$ must interchange $V_1$ and $V_2$. In particular, $t$ has the form
\[
    t=\begin{pmatrix} 0&a^{-1}\\a&0 \end{pmatrix}
\]
for some $a\in\F_p^*$. The relation $tst=s^{-1}$ is then equivalent to $i=-j$. With
\[
    u=\begin{pmatrix} a&0\\0&1 \end{pmatrix},
\]
we have $s^u=s$ and
\[
    t^u=\begin{pmatrix} 0&1\\1&0 \end{pmatrix}.
\]
Hence $S$ is conjugate to $\Delta$.

Now assume that $q$ divides $p+1$. By \cite[II.7.3.(a)]{Huppert}, $G$ contains a cyclic subgroup $U$ of order $p^2-1$. In fact, $U$ consists of the $\mathbb{F}_p$-linear maps $\rho_a: x\mapsto xa$ with $a\in \mathbb{F}_{p^2}^*$. Furthermore, $C_G(U)=U$ and $N_G(U)/U$ has order $2$, which means $N_G(U)=\{\rho_a, \rho_a\tau \mid a\in \mathbb{F}_{p^2}^*\}$. We claim that any involution of $N_G(U)\setminus U$ is $U$-conjugate to $\tau$. On the one hand, $C_U(\tau)=\{\rho_a \mid a\in \mathbb{F}_p^*\}$, hence $|\tau^U|=(p^2-1)/(p-1)=p+1$. On the other hand, $(\tau \rho_a)^2 =\rho_a^{p+1}$ is the identity map if and only if $a^{p+1}=1$, which means that $N_G(U)\setminus U$ has $p+1$ involutions.

Since $[G:U]$ is coprime to $q$, the $q$-Sylow subgroup of $U$ is a $q$-Sylow subgroup of $G$. Up to conjugacy, we can assume that $\langle s \rangle \leq U$ is the unique subgroup of order $q$ in $U$. Then again by \cite[II.7.3.(a)]{Huppert}, $t\in N_G(\langle s \rangle)\leq N_G(U)$. By our claim above, $t$ and $\tau$ are $U$-conjugates. As $U$ centralizes $s$, the subgroups $\langle s,t \rangle$ and $\Delta$ are conjugate.
\end{proof}

We will identify $\hol{\mathbb F_p^n}$ with the affine linear group $AGL(n,p)$ consisting of all affine linear maps $x\mapsto b+xA$, where $b\in\mathbb F_p^n$ and $A\in\aut{\mathbb F_p^n} = GL(n,p)$. Recall that the socle of $AGL(n,p)$ is in fact the unique minimal normal subgroup of $AGL(n,p)$, namely the group consisting of all translations $x\mapsto b+x$.

\begin{proposition}\label{Pr:gr_unique}
Let $G=NH$ be a group such that $\Z_p\times \Z_p\cong N\unlhd G$, $D_{2q}\cong H\leq G$ and $C_G(N)=N$. Then $q$ divides $p^2-1$ and $G\cong \mathbb{F}_p^2 \rtimes \Delta$.
\end{proposition}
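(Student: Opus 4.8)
The plan is to reduce the statement to Proposition \ref{Pr:GL2p} by showing that the conjugation action of $H$ on $N$ is faithful, and then embedding $G$ into the holomorph $\hol{N}$.

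First I would establish that $G$ is an internal semidirect product. Since $p>q$ are odd primes, the orders $|N|=p^2$ and $|H|=2q$ are coprime, so $N\cap H=1$. Together with $N\unlhd G$ and $G=NH$, this gives $G=N\rtimes H$ and $|G|=2p^2q$. Next consider the conjugation action $\alpha:H\to\aut{N}=GL(2,p)$. Its kernel is $C_H(N)=H\cap C_G(N)=H\cap N=1$, where the middle equality uses the hypothesis $C_G(N)=N$ and the last uses $N\cap H=1$. Hence $\alpha$ is faithful, and its image $H\alpha$ is a subgroup of $GL(2,p)$ isomorphic to $D_{2q}$. Proposition \ref{Pr:GL2p} then immediately forces $q$ to divide $p^2-1$, since otherwise $GL(2,p)$ would contain no copy of $D_{2q}$; moreover, the same proposition guarantees that $H\alpha$ is conjugate to $\Delta$ in $GL(2,p)$, say $(H\alpha)^g=\Delta$ for some $g\in GL(2,p)$.

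Finally, because $\alpha$ is faithful, the map $nh\mapsto(n,h\alpha)$ embeds $G$ into $\hol{N}=\F_p^2\rtimes GL(2,p)$ with image the subgroup $\F_p^2\rtimes H\alpha$, exactly as recorded at the start of this subsection. I would then conjugate this image inside $\hol{N}$ by the element $g$ (viewed as a purely linear element of $\hol{N}$). Since the translation subgroup $\F_p^2$ is normal in $\hol{N}$, conjugation by $g$ preserves $\F_p^2$ setwise, while it carries $H\alpha$ onto $(H\alpha)^g=\Delta$. Consequently the image becomes $\F_p^2\rtimes\Delta$, and therefore $G\cong\F_p^2\rtimes\Delta$, as claimed.

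I expect the only delicate point to be the bookkeeping in the last paragraph: one must confirm that conjugation by the linear element $g$ fixes the translation subgroup $\F_p^2$ while transporting $H\alpha$ onto $\Delta$, so that the full semidirect-product structure is preserved under the embedding. Everything else is a direct application of Proposition \ref{Pr:GL2p} together with the holomorph embedding of a faithfully-acting semidirect product already available in this subsection.
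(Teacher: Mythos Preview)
Your proposal is correct and follows essentially the same approach as the paper: both arguments use coprimality to get $N\cap H=1$, use $C_G(N)=N$ to obtain a faithful action and hence an embedding into $\hol{N}=AGL(2,p)$, invoke Proposition~\ref{Pr:GL2p} to force $q\mid p^2-1$ and to conjugate the image of $H$ onto $\Delta$, and then observe that this conjugation fixes the translation subgroup $\F_p^2$ to conclude $G\cong\F_p^2\rtimes\Delta$.
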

\begin{proof}
Since the orders of $N$ and $H$ are coprime, we have $N\cap H=1$. It follows that $G\cong N\rtimes H$, where the action of $H$ on $N$ is by conjugation. The assumption $C_G(N)=N$ means that the action is faithful, and we have an embedding $\varphi:G\to\hol{N}=AGL(2,p)$. The image $N\varphi$ is the socle of $AGL(2,p)$, and $H\varphi \le GL(2,p)$. By Proposition \ref{Pr:GL2p}, $q$ divides $p^2-1$ and there is an element $g\in GL(2,p)$ with $(H\varphi)^g=\Delta$. Since $(N\varphi)^g = N\varphi$, the map $x\mapsto (x\varphi)^g$ is an isomorphism $G\to \mathbb{F}_p^2 \rtimes \Delta$.
\end{proof}

\subsection{Uniqueness}

Throughout this section, let $Q$ be a right Bruck loop of order $pq$. We prove that either $Q$ is the cyclic group $\Z_{pq}$, or $q$ divides $p^2-1$ and $Q$ is the nonassociative Bruck loop $B_{p,q}$ constructed by Niederreiter and Robinson.

We start with a special case of Theorem \ref{Th:Normal_p}, giving a proof independent of most of the results in Section \ref{Sc:First}.

\begin{proposition}\label{Pr:p}
$Q$ possesses a unique subloop of order $p$ and this subloop is normal.
\end{proposition}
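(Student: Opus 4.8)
The plan is to treat uniqueness and normality separately. For uniqueness, $Q$ is right power alternative, so Lemma~\ref{Lm:RightPA} gives at most one subloop of order $p$, while Theorem~\ref{Th:Gl}(iv) supplies an element $b$ of order $p$; hence $S := \langle b\rangle$ is the unique subloop of order $p$. In particular $S$ is characteristic, i.e. preserved by every automorphism of $Q$, a fact I will use repeatedly.

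For normality I would work inside $G := \rmlt{Q}$, which is a $\{p,q\}$-group by Theorem~\ref{Th:Gl}(iii), and exploit the correspondence of Lemma~\ref{Lm:Bloop}. Since $Q$ is right power alternative, $R_S = \langle R_b\rangle$ is cyclic of order $p$, and it is $J$-invariant because $R_b^J = R_{b^{-1}} = R_b^{-1}$. By Lemma~\ref{Lm:Bloop}(vi) it suffices to produce a normal, $J$-invariant subgroup $U\unlhd G$ with $R_Q\cap U = \langle R_b\rangle$, and the natural candidate is $U = O_p(G)$: it is characteristic in $G$, hence $J$-invariant since $G\unlhd A$. As $O_p(G)$ is a $p$-group and $|R_u| = |u|$, any $R_u\in O_p(G)$ has $u$ of $p$-power order, which forces $u$ into the unique $p$-subloop $S$; consequently $R_Q\cap O_p(G) = \langle R_b\rangle$ as soon as $\langle R_b\rangle\le O_p(G)$. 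Everything thus reduces to this single containment.

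To establish it, I would first note that $\rinn{Q}$ normalizes $\langle R_b\rangle$: since $Q$ is right automorphic we have $\rinn{Q}\le\aut{Q}$, and any $\psi\in\rinn{Q}$ sends $R_b$ to $R_{b\psi}$ with $b\psi\in S$ (as $S$ is characteristic), whence $\langle R_b\rangle^\psi = \langle R_b\rangle$. Put $M := \langle \rinn{Q}, R_b\rangle \le N_G(\langle R_b\rangle)$. Because $M$ contains the point stabilizer $\rinn{Q} = G_1$, the orbit $1M$ is a block of the action of $G$ on $Q$ and contains $S = 1\langle R_b\rangle$, so $[M:G_1]\ge p$ and therefore $[G:M]\in\{1,q\}$.

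The crux, and the step I expect to be the main obstacle, is the case $[G:M]=q$, in which $\langle R_b\rangle$ need not be normal in $G$ (indeed for $B_{p,q}$ it is not, the normal $p$-part being all of $\Z_p\times\Z_p$). Here I would pass to $\mathrm{core}_G(M)$: the action of $G$ on the $q$ cosets of $M$ yields an embedding $G/\mathrm{core}_G(M)\hookrightarrow S_q$, and since $G$ is a $\{p,q\}$-group while $p\nmid q!$ and the $q$-part of $q!$ equals $q$, the transitive image is forced to be cyclic of order $q$. Thus $\mathrm{core}_G(M) = M$, so $M\unlhd G$; then $\langle R_b\rangle\unlhd M$ gives $\langle R_b\rangle\le O_p(M)\le O_p(G)$, because $O_p(M)$ is characteristic in the normal subgroup $M$. (When $[G:M]=1$ one has $\langle R_b\rangle\unlhd G$ outright.) With $\langle R_b\rangle\le O_p(G)$ secured, the reduction above gives $R_Q\cap O_p(G) = R_S$, and Lemma~\ref{Lm:Bloop}(vi) delivers $S\unlhd Q$. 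This route never uses the multiplication formula or the factorizations of Section~\ref{Sc:First}, as desired.
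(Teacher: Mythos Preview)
Your argument is correct, but it takes a genuinely different route from the paper's. The paper's proof is a four-line application of Glauberman's solvability theorem for Bruck loops of odd order (Theorem~\ref{Th:Gl}(ii)): since $Q$ is solvable, the derived subloop $Q'$ is proper; if $Q'=1$ then $Q$ is abelian; if $|Q'|=p$ we are done; and if $|Q'|=q$ then any $u\in Q\setminus(Q'\cup S)$ has $|uQ'|=p$ dividing $|u|\ne p$, forcing $|u|=pq$ and $Q\cong\Z_{pq}$, a contradiction.

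Your proof instead works entirely inside $G=\rmlt{Q}$: you exploit the right automorphic property to see that $\rinn{Q}$ normalises $\langle R_b\rangle$, build $M=\rinn{Q}\langle R_b\rangle$, use a block/core argument in $S_q$ to force $M\unlhd G$, deduce $\langle R_b\rangle\le O_p(M)\le O_p(G)$, and then invoke Lemma~\ref{Lm:Bloop}(vi). The paper's line is much shorter and purely loop-theoretic; yours is longer but yields structural information about $\rmlt{Q}$ (namely $\langle R_b\rangle\le O_p(G)\ne 1$) that anticipates the analysis in Lemmas~\ref{Lm:orderN-}--\ref{Lm:N}, and it illustrates concretely how the twisted-subgroup correspondence of Lemma~\ref{Lm:Bloop} transports normality questions from $Q$ to $G$. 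Both avoid Section~\ref{Sc:First} beyond Corollary~\ref{Cr:NiRo}.
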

\begin{proof}
By Theorem \ref{Th:Gl}, $Q$ is solvable and so its derived subloop $Q'$ is properly contained in $Q$. If $Q'=1$ then $Q$ is an abelian group of order $pq$ and the result follows. We can therefore assume that $|Q'|\in\{p,q\}$.

If $|Q'|=p$, we are done, so assume that $|Q'|=q$ and $Q/Q'\cong \Z_p$. Let $S$ be the unique subloop of order $p$, whose existence is guaranteed by Corollary \ref{Cr:NiRo}. Recall that $|u|$ divides $|Q|$ in any right Bol loop. Consider any $u\in Q\setminus(Q'\cup S)$. Since $|uQ'|$ divides $|u|$ and $|uQ'|=p$, it follows that $p$ divides $|u|\ne p$. Thus $|u|=pq$, $Q\cong \Z_{pq}$, and $Q'=1$, a contradiction.
\end{proof}

Let $G=\rmlt{Q}$. By Proposition \ref{Pr:Factorizations} and Corollary \ref{Cr:Factorizations}, there are $a$, $b\in Q$ such that $|a|=q$, $|b|=p$ and $G=\langle R_a,\,R_b\rangle$.

As in Lemma \ref{Lm:Bloop}, let $A = G\rtimes \langle J\rangle$. Since $G\unlhd A$, $G/A\cong \Z_2$ and $G$ is solvable of order $p^aq^b$ by Theorem \ref{Th:Gl}, $A$ is solvable of order $2p^aq^b$. Let $N$ be a minimal normal subgroup of $A$, necessarily an elementary abelian $r$-group for some $r\in\{2,\,p,\,q\}$.

\begin{lemma}\label{Lm:r_not_2}
$r\neq 2$ and $N\unlhd G$.
\end{lemma}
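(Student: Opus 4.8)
The plan is to exploit the solvability of $A$, which (as already noted) forces the minimal normal subgroup $N$ to be an elementary abelian $r$-group with $r\in\{2,p,q\}$. I would rule out $r=2$ first, by showing that a normal $2$-subgroup would be forced to coincide with $\langle J\rangle$ in a way that contradicts the concrete action of $J$ on $G$. Once $r\in\{p,q\}$ is established, the normality of $N$ in $G$ follows from a short order/index argument.

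For the claim $r\neq 2$, I would argue by contradiction. Since $|A|=2p^aq^b$, every $2$-subgroup of $A$ has order at most $2$, so $N=\langle z\rangle$ with $z$ an involution. A normal subgroup of order $2$ is central, so $z\in Z(A)$. Because $G$ has odd order we have $z\notin G$, and since $G$ and $\langle z\rangle$ are both normal, meet trivially, and satisfy $|G|\cdot 2=|A|$, we get $A=G\times\langle z\rangle$. Writing the involution $J$ as $J=gz$ with $g\in G$ and using that $z$ is central, the relation $J^2=1$ yields $g^2=1$, hence $g=1$ as $G$ has odd order; thus $J=z\in Z(A)$. But $J$ acts on $G$ by $R_u^J=R_{u^{-1}}$, and since $Q$ is nontrivial of odd order there is some $u$ with $u\neq u^{-1}$, so $R_u^J=R_{u^{-1}}\neq R_u$. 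Hence $J$ is not central, a contradiction, and therefore $r\in\{p,q\}$.

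For the claim $N\unlhd G$, note that now $|N|$ is an odd prime power greater than $1$. Since $N$ and $G$ are both normal in $A$, so is $N\cap G$; by minimality of $N$ we have either $N\cap G=1$ or $N\cap G=N$. If $N\cap G=1$, then the canonical projection $A\to A/G\cong\Z_2$ restricts to an injection on $N$, which is impossible for a group of odd order greater than $1$. Hence $N\cap G=N$, that is $N\le G$; and since $N$ is normal in the larger group $A\supseteq G$, it is in particular normal in $G$.

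I expect the index argument of the last paragraph to be entirely routine, and the only real content to lie in the $r=2$ case. The crux there is recognizing that a central involution would force $J$ itself to be central, and then contradicting this with the explicit action $R_u^J=R_{u^{-1}}$ of $J$ on the right section $R_Q$. Beyond carefully bookkeeping which elements lie in $G$ versus $A\setminus G$, I do not anticipate a genuine obstacle.
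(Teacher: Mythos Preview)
Your proof is correct and follows essentially the same approach as the paper: show that a minimal normal $2$-subgroup would have order $2$, hence be central, then argue that this forces $J$ itself to be the central involution, contradicting the explicit action $R_u^J=R_{u^{-1}}$. The paper reaches $J=z$ by the slightly quicker observation that otherwise $\langle z,J\rangle$ would have order $4$, and deduces $N\le G$ directly from Lemma~\ref{Lm:Bloop}(i) rather than via minimality, but these are cosmetic differences.
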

\begin{proof}
Suppose first that $r=2$. Then $|N|=2$ because $4$ does not divide $|A|$, and from $N\unlhd A$ we deduce $N\le Z(A)$. Let $\varphi$ be the unique involution of $N$. If $\varphi\ne J$, then $\varphi J = J\varphi$ shows that $\langle \varphi,\,J\rangle$ is a subgroup of order $4$, a contradiction. Thus $N=\{1,J\}$. But then $R_u = R_u^J = R_{u^{-1}}$ implies $|u|\le 2$ for every $u\in Q$, a contradiction. Hence $r\ne 2$. Since $N$ contains no elements of even order, it is a subgroup of $G$ by Lemma \ref{Lm:Bloop}(i).
\end{proof}

Consider a vector space $V$ over a field of odd characteristic, and let $\varphi$ be an involutory automorphism of $V$. Then any $v\in V$ can be written as $v = v^+ + v^-$, where $v^+ = (v+v\varphi)/2$ and $v^- = (v-v\varphi)/2$. Moreover, $v^+\varphi = v^+$ and $v^-\varphi = - v^-$.

Denote by $N^+$ (resp. $N^-$) the fixed points (resp. anti fixed points) of the involutory automorphism $J$ on $N\cong \mathbb F_r^m$. As above, any $u \in N$ can be written as $u = u^+ + u^-$, where $u^+ =(uu^J)^\frac{1}{2}\in N^+$ and $u^-=(u(u^{-1})^J)^\frac{1}{2}\in N^-$. Thus $N=N^+N^-=N^+\times N^-$.

\begin{lemma}\label{Lm:orderN-}
$N^-$ is a twisted subgroup of $G$ contained in $R_Q$, it corresponds to a normal subloop of $Q$, and it has size $r$.
\end{lemma}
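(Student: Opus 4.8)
The plan is to verify the three assertions in turn, leaning on the dictionary between twisted subgroups of $G$ contained in $R_Q$ and subloops of $Q$ furnished by Lemma \ref{Lm:Bloop}.

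First I would record that $N^-$ is a twisted subgroup of $G$ contained in $R_Q$. Since $N\unlhd A$, the involution $J$ normalizes $N$ and acts on the abelian group $N$ as an automorphism, so $N^-$ is exactly its $(-1)$-eigenspace, that is, a subgroup of $N$; being a subgroup, it is closed under inverses and under $(u,v)\mapsto uvu$, hence a twisted subgroup. By definition of $N^-$ we have $u^J=u^{-1}$ for each $u\in N^-$, so $N^-\subseteq K_G(J)=R_Q$ by Lemma \ref{Lm:Bloop}(iii). Writing $S=1N^-$ for the subloop corresponding to $N^-$ under the bijection of Lemma \ref{Lm:Bloop}(iv), I would then observe that $N^-=N\cap K_G(J)=N\cap R_Q$. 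Here $N$ is normal in $G$ (Lemma \ref{Lm:r_not_2}) and $J$-invariant (as $N\unlhd A$), so Lemma \ref{Lm:Bloop}(vi) yields $S\unlhd Q$.

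Next comes the size computation, which is the substantive part. Because $u\mapsto R_u$ restricts to a bijection $S\to R_S=N^-$, we have $|S|=|N^-|$, and $|N^-|$ is a power of $r$ since $N^-$ is a subgroup of the elementary abelian $r$-group $N$. On the other hand $S$ is normal in $Q$, so its cosets partition $Q$ into blocks of size $|S|$, whence $|S|$ divides $|Q|=pq$. A power of $r\in\{p,q\}$ dividing $pq$ can only be $1$ or $r$, since $p\ne q$ forces both $p^2\nmid pq$ and $q^2\nmid pq$. Hence $|N^-|\in\{1,r\}$.

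The crux, and the only step requiring a genuine idea, is to exclude $|N^-|=1$. If $N^-=1$ then $N=N^+\times N^-=N^+$, i.e.\ $J$ fixes $N$ pointwise. By Lemma \ref{Lm:Bloop}(ii) the fixed points of $J$ on $G$ are precisely $\rinn{Q}$, so $N\le\rinn{Q}$. But $\rinn{Q}$ is core-free in $\rmlt{Q}=G$, whereas $N$ is a nontrivial normal subgroup of $G$; this contradicts core-freeness. Therefore $N^-\neq 1$, and so $|N^-|=r$, completing the proof.
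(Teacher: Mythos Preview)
Your argument is correct and follows essentially the same route as the paper: identify $N^-$ as a twisted subgroup contained in $R_Q=K_G(J)$ via Lemma~\ref{Lm:Bloop}(iii), invoke Lemma~\ref{Lm:Bloop}(vi) with $U=N$ to get normality of the corresponding subloop, use divisibility to reduce to $|N^-|\in\{1,r\}$, and rule out $|N^-|=1$ by the core-freeness of $\rinn{Q}$. Your explicit verification that $N^-=N\cap R_Q$ is a nice clarification of the application of Lemma~\ref{Lm:Bloop}(vi).
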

\begin{proof}
We have $N^-\le G$ by Lemma \ref{Lm:r_not_2}. By Lemma \ref{Lm:Bloop}(iii), $N^-$ is a twisted subgroup of $G$ contained in $R_Q$. By Lemma \ref{Lm:Bloop}(vi), $N^-$ corresponds to a normal subloop of $Q$. Since $|Q|=pq$ and $|N^-|$ is a power of $r$, it follows that $|N^-|\in\{1,\,r\}$. Suppose that $|N^-|=1$. Then $N=N^+\le\rinn{Q}$ by Lemma \ref{Lm:Bloop}(ii). Together with $N\unlhd G$, this is a contradiction with the fact that $\rinn{Q}$ is a core-free subgroup of $\rmlt{Q}$.
\end{proof}

\begin{lemma}\label{Lm:N}
If $Q$ is not associative then $N^-=\langle R_b\rangle$, $|N^+|=p$ and $N\cong \Z_p\times \Z_p$.
\end{lemma}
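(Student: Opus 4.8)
The plan is to first pin down $N^-$, then to identify $N$ as the normal closure of $\langle R_b\rangle$, and finally to force $\dim_{\F_p} N = 2$ by an eigenvalue count for the involution $J$.

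First I would determine $r$ and $N^-$. By Lemma~\ref{Lm:orderN-}, $N^-$ is a twisted subgroup contained in $R_Q$ of size $r$, corresponding under the bijection of Lemma~\ref{Lm:Bloop}(iv) to a normal subloop $S=1N^-$ of $Q$ with $|S|=r\in\{p,q\}$. If $r=q$, then $S$ is a normal subloop of order $q$, so Lemma~\ref{Lm:Normal_q} makes $Q$ associative, contrary to hypothesis; hence $r=p$. Then $|S|=p$, so by the uniqueness of the subloop of order $p$ (Corollary~\ref{Cr:NiRo}) we get $S=\langle b\rangle$ and therefore $N^-=R_{\langle b\rangle}=\langle R_b\rangle$, which is the first assertion. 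In particular $N\cong\F_p^m$ for some $m\ge 1$, and $N^+=N\cap\rinn{Q}$ by Lemma~\ref{Lm:Bloop}(ii), so it will suffice to show $m=2$.

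Next I would show that $N$ is generated, as a module for $\langle R_a\rangle$ acting by conjugation, by the single element $R_b$. Put $M=\langle R_b^{R_a^i}\,:\,0\le i<q\rangle\le N$. By construction $M$ is invariant under conjugation by $R_a$; since $N$ is abelian and $R_b\in N$, conjugation by $R_b$ fixes $M$ pointwise, so $M\unlhd G=\langle R_a,R_b\rangle$. A direct computation using $R_a^J=R_a^{-1}$ and $R_b^J=R_b^{-1}$ gives $(R_b^{R_a^i})^J=(R_b^{R_a^{-i}})^{-1}\in M$, so $M$ is $J$-invariant and thus $M\unlhd A$. As $1\ne M\le N$ and $N$ is a minimal normal subgroup of $A$, we conclude $M=N$. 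Thus $N$ is a cyclic $\F_p[\langle R_a\rangle]$-module generated by $R_b$; writing the action of $R_a$ as multiplication by $x$, we have $N\cong\F_p[x]/(g)$ for some $g\mid x^q-1$.

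The hard part will be forcing $m=2$. Here I would exploit that $\langle R_a,J\rangle\cong D_{2q}$ (since $R_a^J=R_a^{-1}$), so $J$ acts on $N$ as an involution $\iota$ with $\iota\rho\iota=\rho^{-1}$, where $\rho$ is the action of $R_a$; concretely $\iota$ sends $f(x)\mapsto -f(x^{-1})$ on $\F_p[x]/(g)$, and $N^-=\langle R_b\rangle$ is precisely its $1$-dimensional $(-1)$-eigenspace. Factoring $g$ into the distinct irreducible factors of $x^q-1$ over $\F_p$ — the factor $x-1$ together with the degree-$d$ factors of $\Phi_q$, where $d=\mathrm{ord}_q(p)$ — and passing to $\bar\F_p$, the involution $\iota$ interchanges the $\omega$- and $\omega^{-1}$-eigenspaces of $\rho$. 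A short bookkeeping then shows that each irreducible factor present in $g$ contributes a positive integer to $\dim N^-$, and that this contribution equals $1$ only for a conjugate pair of linear factors (when $q\mid p-1$) or for a single self-paired quadratic factor (when $q\mid p+1$); every cyclotomic factor occurring when $d\ge 3$ contributes at least $2$, while the factor $x-1$ contributes $1$ but by itself yields $m=1$ and combined with any cyclotomic factor pushes $\dim N^-$ above $1$. The case $m=1$ is excluded directly: if $R_a$ acted trivially on $N$ then $N=\langle R_b\rangle$ would be normal in $A$ of order $p$, forcing $G/N$ cyclic of order $q$, hence $|G|=pq$ and $\rinn{Q}=1$, i.e. $Q$ a group, a contradiction. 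Therefore $\dim N^-=1$ together with $m\ge 2$ leaves only $m=2$, whence $N\cong\Z_p\times\Z_p$ and $|N^+|=|N|/|N^-|=p$ (and as a byproduct $q\mid p^2-1$). I expect the genuinely delicate point to be the combinatorics of how $\iota$ permutes the $\rho$-eigenspaces and the verification that no collection of factors other than the two two-dimensional ones produces $\dim N^-=1$.
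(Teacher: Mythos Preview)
Your argument is correct, but the paper's route is quite different and considerably shorter. After establishing $N^-=\langle R_b\rangle$ (which you both do the same way), the paper does \emph{not} analyse $N$ as an $\F_p[\langle R_a\rangle]$-module. Instead it introduces a second involution $J_0=J^{R_a}$, notes that $JJ_0=R_a^2$ so that $A=\langle N,J,J_0\rangle$, and observes that the fixed-point space of $J_0$ on $N$ is $(N^+)^{R_a}$. Then $M=N^+\cap (N^+)^{R_a}$ is centralised by $N$, $J$ and $J_0$, hence $M\le Z(A)$ and $M\unlhd A$. A pure dimension count gives $\dim M\ge 2(k-1)-k=k-2$; for $k>2$ this makes $M$ a nontrivial proper normal subgroup of $A$ inside $N$, contradicting minimality. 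So $k=2$ drops out in two lines, with no reference to the factorisation of $x^q-1$ or to whether $q\mid p\pm 1$.

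Your approach, by contrast, identifies $N$ as a cyclic $\F_p[x]/(g)$-module with $g\mid x^q-1$ and then does an eigenvalue bookkeeping for $\iota$ on each irreducible factor; this is longer and requires the case split $d=\mathrm{ord}_q(p)\in\{1,2,\ge 3\}$, but it is entirely sound and has the pleasant side effect of delivering $q\mid p^2-1$ immediately (the paper obtains this separately via Proposition~\ref{Pr:GL2p}). If you keep your version, the one place to tighten the exposition is the claim that each irreducible constituent of $g$ contributes a \emph{positive} amount to $\dim N^-$: make explicit that on a self-paired degree-$d$ factor the involution $\iota$ pairs the $d/2$ eigenspaces $\{\omega,\omega^{-1}\}$ and hence has $(-1)$-eigenspace of dimension $d/2$, and that on a non-self-paired pair of degree-$d$ factors it contributes $d$; then the tally forcing $m=2$ is immediate.
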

\begin{proof}
By Proposition \ref{Pr:p}, $Q$ contains a normal subloop of order $p$. It does not contain a normal subloop of order $q$, otherwise $Q\cong \Z_{pq}$ by Lemma \ref{Lm:Normal_q}, a contradiction. By Lemma \ref{Lm:orderN-}, $|N^-|=p$. Since $N^-$ is a twisted subgroup of $G$ contained in $R_Q$, we must have $N^-=\langle R_b\rangle$ because $|R_u|=|u|$ for every $u\in Q$.

Notice that $G = \langle R_a,\,R_b\rangle = \langle N, R_a \rangle = N\langle R_a \rangle$ and $|R_a|=q$. If $N^+$ is trivial, then $|G|\le pq$, which implies that $Q$ is associative, a contradiction. We can therefore assume that $N^+$ is nontrivial. Let $\dim(N)=k$ as a vector space over $\mathbb F_p$. From $N=N^+\times N^-$ and $\dim(N^-)=1$ we deduce $\dim(N^+)=k-1$, $k\ge 2$.

Consider $J_0=J^{R_a}$. Then $JJ_0=JR_a^{-1}JR_a=R_a^2$. Since $|R_a|=q$ is odd, we have $A=\langle N,\,R_a,\,J\rangle = \langle N,\,R_a^2,\,J\rangle = \langle N,\,J,\,J_0\rangle$. The set of fixed points of $J_0$ in $N$ is $(N^+)^{R_a}$ because $(u^{R_a})^{J_0}=(u^J)^{R_a}=u^{R_a}$ holds if and only if $u\in N^+$. Let $M=N^+\cap(N^+)^{R_a}$ and note that all elements of $M$ are centralized by $N$, $J$ and $J_0$. Hence $M\leq Z(A)$ and, in particular, $M\unlhd A$.

Now, $\dim(M) + k = \dim(M) + \dim(N) \ge \dim(N^+) + \dim((N^+)^{R_a}) = 2(k-1)$, so $\dim(M)\ge k-2$. If $k>2$ then $M$ is a nontrivial normal subgroup of $A$ properly contained in $N$, a contradiction. Thus $k=2$, $\dim(N^+)=1$ and $N\cong \Z_p\times \Z_p$.
\end{proof}

\begin{proposition}\label{Pr:BruckRmltJ}
Let $p>q$ be odd primes, let $Q$ be a nonassociative Bruck loop of order $pq$, $G=\rmlt{Q}$ and $A=G\rtimes \langle J\rangle$. Then:
\begin{enumerate}
\item[(i)] $A=NH$ where $N\cong \Z_p\times \Z_p$, $H\cong D_{2q}$, and $C_A(N)=N$.
\item[(ii)] $q$ divides $p^2-1$.
\item[(iii)] $G$ is isomorphic to $(\Z_p\times \Z_p)\rtimes \Z_q$.
\end{enumerate}
\end{proposition}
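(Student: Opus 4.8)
The plan is to construct the dihedral subgroup $H$ by hand, obtain the factorization $A=NH$ from a coprimality count, and then reduce the arithmetic and structural conclusions to the group-theoretic Proposition \ref{Pr:gr_unique}. The only substantive loop-theoretic point will be the faithfulness of the conjugation action of $H$ on $N$.

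For (i) I would set $H=\langle R_a,J\rangle$. Since $Q$ is right Bruck, $R_a^J=R_{a^{-1}}=R_a^{-1}$, so the relations $R_a^q=J^2=(R_aJ)^2=1$ hold; as $R_a$ has order $q$, $J$ has order $2$, and $J\notin G\supseteq\langle R_a\rangle$, these orders are exact and $H\cong D_{2q}$. From $G=N\langle R_a\rangle$ (Lemma \ref{Lm:N} and Corollary \ref{Cr:Factorizations}, using $R_b\in N^-\le N$) together with $N\cap\langle R_a\rangle=1$ by coprimality, we get $|G|=p^2q$ and hence $|A|=2|G|=2p^2q$. Since $N\cap H=N\cap\langle R_a\rangle=1$, this equals $|NH|$, so $A=NH$.

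The heart of the matter is $C_A(N)=N$. As $N$ is abelian we have $N\le C_A(N)$, and the modular law gives $C_A(N)=N\,(C_A(N)\cap H)$; thus it suffices to prove that the conjugation action $H\to\aut{N}$ is faithful, i.e. that its kernel $K=C_H(N)$ is trivial. Now $K\unlhd H\cong D_{2q}$, so $K\in\{1,\langle R_a\rangle,H\}$. The involution $J$ does not centralize $N$ because its $(-1)$-eigenspace $N^-=\langle R_b\rangle$ is nontrivial (Lemma \ref{Lm:N}), ruling out $K=H$. If $K=\langle R_a\rangle$, then $R_a$ would be central in $G=N\langle R_a\rangle$, forcing $G$ abelian; but then the Bruck identity $R_{uv}=(R_vR_u^2R_v)^{1/2}$ of Theorem \ref{Th:Gl}(i) collapses to $R_{uv}=R_uR_v$, and since $u\mapsto R_u$ is an injection this exhibits $Q$ as isomorphic to a subgroup of the abelian group $G$, contradicting nonassociativity. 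Hence $K=1$ and $C_A(N)=N$.

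Parts (ii) and (iii) are then immediate. By (i) the group $A=NH$ satisfies the hypotheses of Proposition \ref{Pr:gr_unique} with $A$ in the role of $G$, so $q$ divides $p^2-1$ (and $A\cong\F_p^2\rtimes\Delta$), which is (ii). For (iii), inside $G$ we have $N\cong\Z_p\times\Z_p$ normal, $\langle R_a\rangle\cong\Z_q$, and $N\cap\langle R_a\rangle=1$, whence $G=N\rtimes\langle R_a\rangle\cong(\Z_p\times\Z_p)\rtimes\Z_q$. I expect the faithfulness step, specifically excluding the abelian case $K=\langle R_a\rangle$, to be the only real obstacle, the remainder being order bookkeeping and the appeal to Proposition \ref{Pr:gr_unique}.
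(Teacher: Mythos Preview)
Your proof is correct and follows essentially the same route as the paper: both take $H=\langle R_a,J\rangle$, verify $H\cong D_{2q}$ and $A=NH$, reduce $C_A(N)=N$ to showing $H\cap C_A(N)$ is trivial via its normality in $H$, and then invoke Proposition~\ref{Pr:gr_unique}. The only cosmetic difference is that you split the exclusion of $K=H$ and $K=\langle R_a\rangle$ into two cases and spell out, via the Bruck identity $R_{uv}=(R_vR_u^2R_v)^{1/2}$, why an abelian $G$ forces $Q$ to be a group; the paper handles both at once (any nontrivial normal subgroup of $D_{2q}$ contains $\langle R_a\rangle$) and simply asserts the contradiction with nonassociativity.
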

\begin{proof}
(i) Lemmas \ref{Lm:r_not_2}--\ref{Lm:N} imply that $A$ has a normal subgroup $N\cong \Z_p\times \Z_p$ with $R_b\in N$. The subgroup $H=\langle R_a,\,J \rangle$ is isomorphic to $D_{2q}$. $A=NH$ holds. Since $C_A(N)$ is normal in $A$, $H_0=H\cap C_A(N)$ is normal in $H$. This implies that if $H_0$ is not trivial then it contains the unique cyclic subgroup of order $q$ of $H$. In particular, $R_a\in H_0$. From $R_a\in C_A(N)$, $R_b\in N$ and $\rmlt{Q}=\langle R_a,\,R_b \rangle$ we deduce that $\rmlt{Q}$ is commutative, a contradiction with nonassociativity of $Q$. Hence, $H_0=\{1\}$ and $C_A(N)=N$ hold.

(ii) By Proposition \ref{Pr:gr_unique}, $q$ divides $p^2-1$.

(iii) Let $K=H\cap G$. By Lemma \ref{Lm:Bloop}(i), $K\cong \Z_q$ and both $N$ and $K$ are subgroups of $G$.
\end{proof}

Recall that whenever $q$ divides $p^2-1$, Niederreiter and Robinson constructed a nonassociative right Bruck loop $B_{p,q}$ of order $pq$.

\begin{theorem}\label{Th:Uniqueness}
Let $p>q$ be odd primes and let $Q$ be a right Bruck loop of order $pq$. Then either $Q\cong \Z_{pq}$, or $q$ divides $p^2-1$ and $Q\cong B_{p,q}.$
\end{theorem}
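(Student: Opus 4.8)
The plan is to treat the associative and nonassociative cases separately; the nonassociative case reduces entirely to the uniqueness of the group $A=\rmlt{Q}\rtimes\langle J\rangle$ established above.

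First I would dispose of the associative case. If $Q$ is associative then it is a group of order $pq$ satisfying \eqref{Eq:AIP}. In any group $(uv)^{-1}=v^{-1}u^{-1}$, so \eqref{Eq:AIP} forces $v^{-1}u^{-1}=u^{-1}v^{-1}$ for all $u$, $v$, whence $Q$ is abelian. An abelian group of order $pq$ with $p\neq q$ is cyclic, so $Q\cong\Z_{pq}$.

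Now suppose $Q$ is nonassociative. By Proposition \ref{Pr:BruckRmltJ}(ii), $q$ divides $p^2-1$; by Proposition \ref{Pr:BruckRmltJ}(i), the group $A=\rmlt{Q}\rtimes\langle J\rangle$ has the form $A=NH$ with $N\cong\Z_p\times\Z_p$, $H\cong D_{2q}$ and $C_A(N)=N$. Proposition \ref{Pr:gr_unique} then fixes the isomorphism type of $A$ completely, namely $A\cong\F_p^2\rtimes\Delta$. The key observation is that this isomorphism type depends only on $p$ and $q$, not on the particular loop $Q$.

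To finish, I would apply the same argument to $B_{p,q}$. Since $q$ divides $p^2-1$, the loop $B_{p,q}$ exists and is a nonassociative right Bruck loop of order $pq$, so Propositions \ref{Pr:BruckRmltJ} and \ref{Pr:gr_unique} give $\rmlt{B_{p,q}}\rtimes\langle J\rangle\cong\F_p^2\rtimes\Delta$ as well. Hence the groups $A$ attached to $Q$ and to $B_{p,q}$ are isomorphic, and Proposition \ref{Pr:BruckIsom} yields $Q\cong B_{p,q}$. The only real content is hidden in Proposition \ref{Pr:gr_unique}; once the group $A$ is pinned to the single isomorphism type $\F_p^2\rtimes\Delta$, the conclusion is immediate from the correspondence of Proposition \ref{Pr:BruckIsom}, so I expect no further obstacle.
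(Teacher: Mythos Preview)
Your proposal is correct and follows essentially the same route as the paper's proof: split into associative and nonassociative cases, in the nonassociative case invoke Propositions \ref{Pr:BruckRmltJ} and \ref{Pr:gr_unique} to pin $A$ to the single isomorphism type $\F_p^2\rtimes\Delta$, do the same for $B_{p,q}$, and conclude via Proposition \ref{Pr:BruckIsom}. The only difference is cosmetic: you spell out why an associative Bruck loop of order $pq$ is abelian (via \eqref{Eq:AIP}), while the paper simply asserts $Q\cong\Z_{pq}$ in that case.
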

\begin{proof}
If $Q$ is associative then $Q\cong \Z_{pq}$, so we can assume that $Q$ is not associative. Let us relabel $Q$ as $Q_1$, let $J_1$ be the inversion map in $Q_1$, and let $A_1 = \rmlt{Q_1}\rtimes \langle J_1\rangle$. By Propositions \ref{Pr:gr_unique} and \ref{Pr:BruckRmltJ}, $q$ divides $p^2-1$ and $A_1\cong \mathbb{F}_p^2\rtimes \Delta$.

Let $Q_2=B_{p,q}$, let $J_2$ be the inversion map in $Q_2$, and let $A_2=\rmlt{Q_2}\rtimes\langle J_2\rangle$. Reasoning as before, we get $A_2\cong \mathbb{F}_p^2\rtimes \Delta$, too. In particular, $A_1\cong A_2$, and Proposition \ref{Pr:BruckIsom} implies $Q_1\cong Q_2$.
\end{proof}

We will prove later that in every nonassociative right Bol loop of order $pq$ we have $\lnuc{Q}\cong \Z_p$. Here is a special case for right Bruck loops.

\begin{proposition}
Let $p>q$ be odd primes and let $Q$ be the nonassociative right Bruck loop of order $pq$. Then $\lnuc{Q}\cong \Z_p$.
\end{proposition}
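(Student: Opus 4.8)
The plan is to exploit the detailed structural information we already have about $G=\rmlt{Q}$ and its relation to the left nucleus. By Proposition~\ref{Pr:BruckRmltJ}, we know $G\cong(\Z_p\times\Z_p)\rtimes\Z_q$, with the normal subgroup $N\cong\Z_p\times\Z_p$ and the complement $K=H\cap G\cong\Z_q$ where $K=\langle R_a\rangle$. Recall the characterization of the left nucleus in terms of inner mappings: $\lnuc{Q}=\setof{u\in Q}{u\varphi=u\text{ for every }\varphi\in\rinn{Q}}$. The strategy is to first locate $\lnuc{Q}$ inside $Q$ using this fixed-point description, then show it has order exactly $p$.

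First I would show $\langle b\rangle\subseteq\lnuc{Q}$, which gives the lower bound. Since $Q$ is right Bruck, it is right automorphic, so $\rinn{Q}\le\aut{Q}$. From Lemma~\ref{Lm:N} we have $N^-=\langle R_b\rangle$ and $|N^+|=p$. The key observation is that $N^+$ consists of the fixed points of $J$ in $N$, and by Lemma~\ref{Lm:Bloop}(ii) these lie in $\rinn{Q}$; since $N\cong\Z_p\times\Z_p$ is abelian and contains $R_b$, every element of $\rinn{Q}$ coming from $N^+$ commutes with $R_b$, hence fixes $b=1R_b$. More generally I would argue that $b$ is fixed by all of $\rinn{Q}$: writing a typical inner map and using that $\langle b\rangle$ is the unique (hence characteristic) subloop of order $p$, which is normal by Proposition~\ref{Pr:p}, each $\varphi\in\rinn{Q}\le\aut{Q}$ restricts to an automorphism of $\langle b\rangle\cong\Z_p$; the remaining task is to rule out nontrivial action, which follows because a nontrivial automorphism of $\Z_p$ has order dividing $p-1$ while $\rinn{Q}$ has order dividing $|G|/|Q|$, forcing the relevant action to be trivial here by order considerations tied to $N^+$ centralizing $R_b$.

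Next I would establish the upper bound $\lnuc{Q}\neq Q$, i.e.\ that no element of order $q$ lies in $\lnuc{Q}$. If some $a'$ of order $q$ were in $\lnuc{Q}$, then $\langle a'\rangle$ would be a subloop of order $q$ in the left nucleus; since the left nucleus is a subgroup-like object and, being fixed pointwise by $\rinn{Q}$, any nuclear subloop of order $q$ would be normal, contradicting Lemma~\ref{Lm:Normal_q} (which forces associativity in the presence of a normal subloop of order $q$). Combining the two bounds, $|\lnuc{Q}|$ is a divisor of $pq$ that is at least $p$ and not equal to $pq$ (as $Q$ is nonassociative, so $\lnuc{Q}\neq Q$), whence $\lnuc{Q}$ has order exactly $p$ and equals the unique subloop $\langle b\rangle\cong\Z_p$.

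The main obstacle I anticipate is the lower bound: verifying carefully that \emph{every} right inner mapping fixes $b$, not merely those arising from $N^+$. The cleanest route is probably to translate the fixed-point condition into the group $G$ via the correspondence of Lemma~\ref{Lm:Bloop}, showing that $R_b$ is centralized by $\rinn{Q}$ precisely because $R_b\in N$ with $N$ abelian and $\rinn{Q}\cap N=N^+$, while the $K$-part acts on $N$ and one must check it does not move the specific line $N^-=\langle R_b\rangle$ in a way that disturbs $b$ as an element of $Q$; this requires using that $K$ normalizes $N^-$ (as $\langle b\rangle\unlhd Q$) together with the identification $\lnuc{Q}=\mathrm{Fix}_{\rinn{Q}}$.
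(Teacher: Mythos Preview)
Your core idea for the lower bound is exactly the paper's: since $|G|=p^2q$ by Proposition~\ref{Pr:BruckRmltJ}(iii), we have $|\rinn{Q}|=|G|/|Q|=p$; the restriction map $\rinn{Q}\to\aut{\langle b\rangle}\cong\Z_{p-1}$ must then be trivial because $\gcd(p,p-1)=1$, so every right inner mapping fixes $\langle b\rangle$ pointwise and $\langle b\rangle\le\lnuc{Q}$. You state this order argument, but you bury it under a discussion of $N^+$, $N^-$, and the $K$-action that is entirely unnecessary and leaves you uncertain (your final ``main obstacle'' paragraph). None of that machinery is needed once you observe $|\rinn{Q}|=p$; the whole lower bound is a two-line coprimality argument.

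For the upper bound, your first attempt has a gap: there is no reason a subloop of order $q$ inside $\lnuc{Q}$ should be \emph{normal} in $Q$ (being fixed pointwise by $\rinn{Q}$ is far from normality, which requires invariance under all inner mappings), so Lemma~\ref{Lm:Normal_q} does not apply. Your fallback is correct in spirit but the phrasing ``$|\lnuc{Q}|$ is a divisor of $pq$'' appeals to a Lagrange-type theorem you have not justified. The clean argument is: if $\lnuc{Q}\supsetneq\langle b\rangle$ then $\lnuc{Q}$ contains some element of order $q$ (by Corollary~\ref{Cr:NiRo}) together with $b$, and Proposition~\ref{Pr:Factorizations} then forces $\lnuc{Q}=Q$, contradicting nonassociativity.
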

\begin{proof}
We first mimic the beginning of the proof of \cite[Proposition 3.18]{Vojtechovsky}. Let $S$ be the unique normal subgroup of order $p$ in $Q$. Consider the mapping $f:\rinn{Q}\to\aut{S}$, $\varphi f = \varphi|_S$, clearly a homomorphism. The kernel of $f$ is equal to $C=\setof{\varphi\in\rinn{Q}}{\varphi|_S=\mathrm{id}_S}$. Since $\aut{S}\cong\aut{\Z_p}$, we see that $\rinn{Q}/C\le\aut{S}$ is a cyclic group of order dividing $p-1$. However, we know that $|\rinn{Q}|=p$ from Proposition \ref{Pr:BruckRmltJ}(iii), so $|\rinn{Q}/C|=1$ and $|C|=p$. It follows that $S\le \lnuc{Q}$.
\end{proof}

\section{Second glance at right Bol loops of order $pq$}\label{Sc:Second}

\subsection{The divisibility conditions $q\mid p^2-1$}

We will now show that a nonassociative right Bol loop $Q$ of order $pq$ exists for odd primes $p>q$ if and only if $q$ divides $p^2-1$. If we knew that the associated right Bruck loop $(Q,\circ)$ is nonassociative, we would be done by Theorem \ref{Th:Uniqueness}. But it is conceivable that $(Q,\circ)$ is a group and this situation must be carefully excluded. The setup we develop here will be useful later, too. Moreover, it will allow us to give another proof of Theorem \ref{Th:Normal_p} independent of most of Section \ref{Sc:First}.

Let $Q$ be a loop. Recall that a triple of bijections $(\alpha,\beta,\gamma)$ of $Q$ is an \emph{autotopism} of $Q$ if $u\alpha\cdot v\beta = (uv)\gamma$ holds for every $u$, $v\in Q$. The autotopisms of $Q$ form a group under componentwise composition, the \emph{autotopism group} $\atp{Q}$.

We claim that a loop $Q$ is right Bol if and only if $(R_u^{-1},L_uR_u,R_u)\in\atp{Q}$ for every $u\in Q$. Indeed, the condition $(R_u^{-1},L_uR_u,R_u)\in\atp{Q}$ is equivalent to the identity $(w/u)(uv\cdot u) = (wv)u$, which is equivalent to \eqref{Eq:RBol} upon substituting $wu$ for $w$.

\begin{lemma}[\cite{Bruck}]\label{Lm:PrincipalAtp}
Let $Q$ be a loop and let $\alpha$, $\beta$, $\gamma$ be bijections of $Q$. Then:
\begin{enumerate}
\item[(i)] $(\mathrm{id},\beta,\gamma)\in\atp{Q}$ if and only if $\beta=\gamma=R_w$ and $w\in\rnuc{Q}$.
\item[(ii)] $(\alpha,\mathrm{id},\gamma)\in\atp{Q}$ if and only if $\alpha=\gamma=L_w$ and $w\in \lnuc{Q}$.
\end{enumerate}
\end{lemma}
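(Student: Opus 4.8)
The plan is to exploit the defining equation of an autotopism, namely that $(\alpha,\beta,\gamma)\in\atp{Q}$ means $u\alpha\cdot v\beta=(uv)\gamma$ for all $u,v\in Q$, and to pin down the unknown bijections by evaluating this equation at the identity element $1$. Both parts are left--right mirror images of each other, so I would prove (i) in full and obtain (ii) by the obvious dualization (or by applying (i) to the opposite loop); I describe (i) in detail.

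For the forward direction of (i), assume $(\mathrm{id},\beta,\gamma)\in\atp{Q}$, so that $u\cdot v\beta=(uv)\gamma$ for all $u,v$. Setting $u=1$ gives $v\beta=v\gamma$ for every $v$, hence $\beta=\gamma$. Next, writing $w=1\beta$ and setting $v=1$ in $u\cdot v\beta=(uv)\beta$ yields $u\cdot w=u\beta$ for every $u$, so $\beta=R_w$ and likewise $\gamma=R_w$. With $\beta=\gamma=R_w$ the autotopism equation reads $u\cdot(vw)=(uv)w$ for all $u,v$, which is precisely the statement $w\in\rnuc{Q}$. The converse is immediate: if $w\in\rnuc{Q}$ then $u\cdot vR_w=u(vw)=(uv)w=(uv)R_w$, so $(\mathrm{id},R_w,R_w)\in\atp{Q}$.

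For (ii), the identical argument run on the equation $u\alpha\cdot v=(uv)\gamma$ forces $\alpha=\gamma$ (set $v=1$) and then $\alpha=L_w$ with $w=1\alpha$ (set $u=1$), after which the autotopism condition becomes $(wu)v=w(uv)$ for all $u,v$, i.e.\ $w\in\lnuc{Q}$; the converse is again a one-line check. There is no genuine obstacle here: the only point requiring care is the bookkeeping of which variable to specialize first, since one specialization identifies $\beta$ with $\gamma$ (resp.\ $\alpha$ with $\gamma$) while the other reveals the remaining map to be a translation. The content of the lemma is essentially the observation that a principal autotopism with one identity component must have its other two components equal to a single nuclear translation.
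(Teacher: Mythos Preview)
Your argument is correct and is the standard proof of this classical fact. The paper itself does not supply a proof of this lemma; it merely quotes the result from Bruck's monograph, so there is nothing to compare against beyond noting that your write-up is exactly the elementary verification one would expect.
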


Denote by $\pi_i$ the projection on the $i$th coordinate.

\begin{lemma}\label{Lm:Atp}
Let $Q$ be a uniquely $2$-divisible right Bol loop, let $N=\genof{(R_u^{-1},L_uR_u,R_u)}{u\in Q}\le\atp{Q,\cdot}$, and let $(Q,\circ)$ be the right Bruck loop associated with $(Q,\cdot)$. Then:
\begin{enumerate}
\item[(i)] The projection $\pi_1:N\to\rmlt{Q,\cdot}$ is a surjective homomorphism and there is a subloop $W_1$ of $\rnuc{Q,\cdot}$ such that $\ker{\pi_1}= \setof{(\mathrm{id},R_w,R_w)}{w\in\rnuc{Q,\cdot}}$ is isomorphic to $W_1$.
\item[(ii)] The projection $\pi_2:N\to\genof{L_uR_u}{u\in Q}\cong \rmlt{Q,\circ}$ is a surjective homomorphism, there is a subloop $W_2$ of $\lnuc{Q,\cdot}$ such that $\ker{\pi_2}=\setof{(L_w,\mathrm{id},L_w)}{w\in W_2}$ is anti-isomorphic to $W_2$, and $\ker{\pi_2}\le Z(N)$.
\end{enumerate}
\end{lemma}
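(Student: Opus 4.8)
The plan is to handle both projections in parallel, exploiting that composition in $\atp{Q,\cdot}$ is componentwise, so each $\pi_i$ is automatically a group homomorphism and I only need to compute images and kernels. For the images: the generators $(R_u^{-1},L_uR_u,R_u)$ of $N$ project under $\pi_1$ to $R_u^{-1}$, and since $\langle R_u^{-1}\mid u\in Q\rangle=\rmlt{Q,\cdot}$, the map $\pi_1$ is onto. Under $\pi_2$ the same generators project to $L_uR_u$, so $\pi_2(N)=\genof{L_uR_u}{u\in Q}$, which is isomorphic to $\rmlt{Q,\circ}$ by Lemma \ref{Lm:LxRx}. This settles the surjectivity assertions.

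Next I would pin down the kernels using Lemma \ref{Lm:PrincipalAtp}. An element of $\ker{\pi_1}$ is an autotopism of the form $(\mathrm{id},\beta,\gamma)$ lying in $N$, so by Lemma \ref{Lm:PrincipalAtp}(i) it equals $(\mathrm{id},R_w,R_w)$ for some $w\in\rnuc{Q,\cdot}$; set $W_1=\setof{w\in\rnuc{Q,\cdot}}{(\mathrm{id},R_w,R_w)\in N}$. Because $w'\in\rnuc{Q,\cdot}$ gives $vR_wR_{w'}=(vw)w'=v(ww')=vR_{ww'}$, and the right inverse property gives $R_w^{-1}=R_{w^{-1}}$, the assignment $w\mapsto(\mathrm{id},R_w,R_w)$ is an injective homomorphism; hence $W_1$ is a subloop of $\rnuc{Q,\cdot}$ and $\ker{\pi_1}\cong W_1$. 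The analysis of $\pi_2$ is parallel but with one crucial difference: Lemma \ref{Lm:PrincipalAtp}(ii) forces the elements of $\ker{\pi_2}$ to be triples $(L_w,\mathrm{id},L_w)$ with $w\in\lnuc{Q,\cdot}$, and on the left nucleus one computes $vL_wL_{w'}=w'(wv)=(w'w)v=vL_{w'w}$, i.e.\ $L_wL_{w'}=L_{w'w}$ rather than $L_{ww'}$. Consequently $w\mapsto(L_w,\mathrm{id},L_w)$ reverses the order of products, so it is an anti-isomorphism from the subloop $W_2=\setof{w\in\lnuc{Q,\cdot}}{(L_w,\mathrm{id},L_w)\in N}$ of $\lnuc{Q,\cdot}$ onto $\ker{\pi_2}$.

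It then remains to prove $\ker{\pi_2}\le Z(N)$. The key point is the standard fact that a left-nuclear element has its left translation commuting with every right translation: for $w\in\lnuc{Q,\cdot}$ and all $u$, $v$ we have $vL_wR_u=(wv)u=w(vu)=vR_uL_w$, so $L_wR_u=R_uL_w$ and hence also $L_wR_u^{-1}=R_u^{-1}L_w$. Therefore the triple $(L_w,\mathrm{id},L_w)$ commutes componentwise with each generator $(R_u^{-1},L_uR_u,R_u)$ of $N$, the middle coordinate commuting with $\mathrm{id}$ trivially. Since an element centralizing a generating set of $N$ centralizes all of $N$, we conclude $\ker{\pi_2}\le Z(N)$.

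The bulk of the argument is bookkeeping once Lemma \ref{Lm:PrincipalAtp} is available; the two places that demand care are getting the direction of the maps right---an isomorphism in (i) but an anti-isomorphism in (ii), forced by the asymmetry between $R_wR_{w'}=R_{ww'}$ on the right nucleus and $L_wL_{w'}=L_{w'w}$ on the left nucleus---and the centrality claim. I expect the centrality to be the only genuinely substantive step, and it rests entirely on the observation that left-nuclear left translations commute with all right translations.
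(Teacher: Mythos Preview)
Your proof is correct and follows essentially the same approach as the paper: surjectivity of the projections is read off the generators, the kernels are identified via Lemma~\ref{Lm:PrincipalAtp}, the (anti-)isomorphism with $W_i$ comes from the identities $R_wR_{w'}=R_{ww'}$ and $L_wL_{w'}=L_{w'w}$ on the respective nuclei, and centrality is deduced from $L_wR_u=R_uL_w$ for $w\in\lnuc{Q,\cdot}$.
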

\begin{proof}
(i) It is clear that $\pi_1$ is onto $\rmlt{Q,\cdot}$. Let $\varphi_w = (\mathrm{id},R_w,R_w)$. By Lemma \ref{Lm:PrincipalAtp}(i), there is a subset $W_1$ of $\rnuc{Q,\cdot}$ such that $\ker{\pi_1}=\setof{\varphi_w}{w\in W_1}$. We claim that $W_1$ is a subloop of $(Q,\cdot)$ and $\ker{\pi_1}\cong W_1$. If $u$, $v\in W_1$ then $\varphi_u$, $\varphi_v\in\ker{\pi_1}$ and $\varphi_u\varphi_v$, $\varphi_u^{-1}\in\ker{\pi_1}$. We have $\varphi_u^{-1}=\varphi_{u^{-1}}$ thanks to the right inverse property, and $\varphi_u\varphi_v =\varphi_{uv}$ because $v\in\rnuc{Q}$. Therefore $uv$, $u^{-1}\in W_1$.

(ii) It is clear that $\pi_2$ is onto $\genof{L_uR_u}{u\in Q}$, which is isomorphic to $\rmlt{Q,\circ}$ by Lemma \ref{Lm:LxRx}. Let $\psi_w = (L_w,\mathrm{id},L_w)$. By Lemma \ref{Lm:PrincipalAtp}(ii), there is a subset $W_2$ of $\lnuc{Q,\cdot}$ such that $\ker{\pi_2}=\setof{\psi_w}{w\in W_2}$. We claim that $W_2$ is a subloop of $(Q,\cdot)$ and $\ker{\pi_2}$ is anti-isomorphic to $W_2$. If $u$, $v\in W_2$ then $\psi_u$, $\psi_v\in\ker{\pi_2}$ and $\psi_u\psi_v$, $\psi_u^{-1}\in\ker{\pi_2}$. We have $\psi_u\psi_v = \psi_{vu}$ since $v\in\lnuc{Q,\cdot}$, and $\psi_u^{-1}=\psi_{u^{-1}}$ thanks to $u\in\lnuc{Q,\cdot}$. Therefore $vu$, $u^{-1}\in W_1$.

Finally, the condition $w\in\lnuc{Q,\cdot}$ is equivalent to $L_wR_u=R_uL_w$ for every $u\in Q$. It follows that $\psi_w$ commutes with every $(R_u^{-1},L_uR_u,R_u)$, hence $\ker{\pi_2}\le Z(N)$.
\end{proof}

\begin{proposition}\label{Pr:HalfGabor}
Let $(Q,\cdot)$ be a Bol loop of odd order. If the associated right Bruck loop $(Q,\circ)$ is associative then $\rmlt{Q,\cdot}$ is nilpotent of class at most $2$.
\end{proposition}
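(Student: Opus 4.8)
The plan is to run everything through the group $N=\genof{(R_u^{-1},L_uR_u,R_u)}{u\in Q}$ of Lemma \ref{Lm:Atp} and its two coordinate projections. Since $(Q,\cdot)$ has odd order, squaring is a bijection and $(Q,\cdot)$ is uniquely $2$-divisible, so Proposition \ref{Pr:AssociatedBruck} produces the associated right Bruck loop $(Q,\circ)$ and Lemma \ref{Lm:Atp} applies. Recall that $\pi_1:N\to\rmlt{Q,\cdot}$ is onto by Lemma \ref{Lm:Atp}(i), and $\pi_2:N\to\genof{L_uR_u}{u\in Q}$ is onto with $\ker{\pi_2}\le Z(N)$ by Lemma \ref{Lm:Atp}(ii). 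The whole statement will follow once I show that the image of $\pi_2$ is abelian.

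To see this, I would first observe that an associative right Bruck loop is an abelian group: being associative it is a group, and \eqref{Eq:AIP} gives $y^{-1}\circ x^{-1} = (x\circ y)^{-1} = x^{-1}\circ y^{-1}$ for all $x$, $y$, forcing commutativity. Hence, under the hypothesis, $(Q,\circ)$ is an abelian group, so its right regular representation $\rmlt{Q,\circ}$ is abelian. By Lemma \ref{Lm:LxRx} the image $\genof{L_uR_u}{u\in Q}$ of $\pi_2$ is conjugate in the symmetric group on $Q$ to $\rmlt{Q,\circ}$, and is therefore abelian as well.

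Now the conclusion is immediate: since $\pi_2(N)$ is abelian, we get $[N,N]\le\ker{\pi_2}\le Z(N)$, so $N$ is nilpotent of class at most $2$. Because $\rmlt{Q,\cdot}=\pi_1(N)$ is a homomorphic image of $N$ and the property of being nilpotent of class at most $2$ passes to quotients, $\rmlt{Q,\cdot}$ is nilpotent of class at most $2$. I do not expect any serious obstacle: the argument is essentially formal once Lemma \ref{Lm:Atp} is in hand, and the only genuinely new input is the elementary remark that an associative Bruck loop is abelian, which is exactly what makes the target of $\pi_2$ abelian. The one point worth double-checking is that odd order really does guarantee unique $2$-divisibility, since this is what legitimizes passing to $(Q,\circ)$ and invoking Lemma \ref{Lm:Atp} in the first place.
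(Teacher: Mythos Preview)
Your proof is correct and follows essentially the same route as the paper: use Lemma~\ref{Lm:Atp} to place $N$ between $\rmlt{Q,\circ}$ (via $\pi_2$) and $\rmlt{Q,\cdot}$ (via $\pi_1$), observe that the image of $\pi_2$ is abelian so that $[N,N]\le\ker{\pi_2}\le Z(N)$, and conclude by passing to the quotient $\pi_1(N)=\rmlt{Q,\cdot}$. Your explicit justification that an associative Bruck loop is abelian and your remark on unique $2$-divisibility (which indeed holds, since $\rmlt{Q,\cdot}$ has odd order and $u\mapsto R_u$ intertwines squaring on $Q$ with squaring in that group) simply spell out points the paper leaves implicit.
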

\begin{proof}
Let $N$, $\pi_1$ and $\pi_2$ be as in Lemma \ref{Lm:Atp}. Then $\ker{\pi_2}\le Z(N)$, and $N/\ker{\pi_2}\cong \rmlt{Q,\circ}\cong (Q,\circ)$ is an abelian group by assumption. Therefore $N$ is nilpotent of class at most $2$. Since $\pi_1(N)=\rmlt{Q,\cdot}$, we are done.
\end{proof}

Recall that in a finite nilpotent group the Sylow subgroups are normal, and the maximal subgroups are normal of prime index.

\begin{lemma}\label{Lm:nilpot}
Let $G$ be a finite nilpotent group and $H$ a core-free subgroup of $G$. Then for any prime divisor $r$ of $|G|$, there is a normal subgroup $M\unlhd G$ such that $H\leq M$ and $r=|G/M|$.
\end{lemma}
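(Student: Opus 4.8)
The plan is to exploit the structure of a finite nilpotent group as the direct product of its Sylow subgroups. Write $G = P \times K$, where $P$ is the (normal) Sylow $r$-subgroup and $K$ is the product of the remaining Sylow subgroups, so that $\gcd(|K|,r)=1$. Let $\rho\colon G \to P$ be the projection with kernel $K$, and put $H_P = \rho(H) \le P$. The reduction I would make is this: if $M_P$ is any maximal subgroup of $P$ containing $H_P$, then $M := M_P \times K$ works. Indeed, since maximal subgroups of the $r$-group $P$ are normal of index $r$, we get $M_P \unlhd P$, hence $M_P \times K \unlhd P \times K = G$ with $|G/M| = r$; and every $(p,k)\in H$ has $p \in H_P \le M_P$, so $H \le M$. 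Thus the whole statement reduces to producing a maximal subgroup of $P$ containing $H_P$, which exists exactly when $H_P$ is a \emph{proper} subgroup of $P$.

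The crux, and the only place where core-freeness is used, is therefore to prove $H_P \neq P$. I would argue by contradiction, assuming $\rho(H) = P$, equivalently $HK = G$. First I claim $H \cap P \unlhd G$. This is transparent in the direct product: for $h \in H$ we have $(H\cap P)^h = H^h \cap P^h = H \cap P$, since $H^h = H$ and $P \unlhd G$; and for $k \in K$ each element of $H \cap P \subseteq P$ is centralized by $k$ because $[P,K]=1$, so $(H\cap P)^k = H \cap P$. As $G = HK$, every element of $G$ normalizes $H \cap P$. Since $H \cap P$ is then a normal subgroup of $G$ contained in the core-free subgroup $H$, it must be trivial: $H \cap P = 1$.

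Now I would finish with an order count. The projection $\rho_K\colon G \to K$ onto the complementary factor has kernel $P$, so $\rho_K|_H$ has kernel $H \cap P = 1$ and thus embeds $H$ into $K$; hence $|H|$ divides $|K|$, and in particular $r \nmid |H|$. On the other hand, $\rho(H) = P$ forces $|P|$, a nontrivial power of $r$ (nontrivial because $r \mid |G|$), to divide $|H|$, so $r \mid |H|$. This contradiction shows $H_P \neq P$, and the reduction above then yields the desired $M$.

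I expect the only subtle point to be the normality of $H \cap P$ in $G$; everything else is routine bookkeeping with the decomposition $G = P \times K$. That step is in fact clean precisely because the centralizing relation $[P,K]=1$ makes $K$ act trivially on $H \cap P$, so once one observes that $H$ normalizes its intersection with the normal subgroup $P$, normality under all of $G = HK$ is immediate.
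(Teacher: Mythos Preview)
Your proof is correct and takes a genuinely different route from the paper's. The paper argues by induction on $|G|$: it picks a maximal subgroup $N_1\supseteq H$ (normal of some prime index $p$ in the nilpotent group $G$), is done if $p=r$, and otherwise applies the induction hypothesis inside $N_1$ to obtain $N_2\unlhd N_1$ of index $r$ containing $H$, then sets $M=N_2P$ where $P$ is the Sylow $p$-subgroup of $G$. You instead use the Sylow decomposition $G=P\times K$ once and for all, reducing the problem to showing that the projection $H_P=\rho(H)$ is a proper subgroup of the $r$-group $P$; your normality argument for $H\cap P$ via $G=HK$ and $[P,K]=1$, followed by core-freeness and an order count, handles this cleanly and without recursion. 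Your approach makes the role of core-freeness completely transparent (it is invoked exactly once, to force $H\cap P=1$), whereas the paper's inductive step tacitly needs a suitable hypothesis on $H$ to persist inside $N_1$, a point your decomposition sidesteps entirely.
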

\begin{proof}
We use induction on $|G|$. Let $N_1$ be a maximal subgroup of $G$ containing $H$. Then $N_1$ is normal in $G$ of prime index $p$. If $r=p$ then we are done. Assume $r\neq p$. By induction hypothesis, $N_1$ has a normal subgroup $N_2$ with $H\leq N_2$ and $r=|N_1/N_2|$. Let $P$ be a Sylow $p$-subgroup of $G$ and define $M=N_2P$. Since $H$ is core-free and $P\unlhd G$, we see that $P$ is not a subgroup of $N_2$ and $|M/N_2|=p$. This implies that $|G/M|=r$, $M$ is maximal in $G$ and hence $M\unlhd G$.
\end{proof}

\begin{lemma}[{{\cite[2.12]{Aschbacher2006}}}]\label{Lm:Aschbacher}
Let $Q$ be a loop and $M$ a group with $\rinn{Q}\leq M \unlhd \rmlt{Q}$. Then $S=\setof{x\in Q}{R_x\in M}=1M$ is a normal subloop of $Q$. Moreover, $Q/S \cong \rmlt{Q}/M$ holds.
\end{lemma}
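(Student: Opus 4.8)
The plan is to realize $S$ as the kernel of an explicit surjective loop homomorphism from $Q$ onto the group $\rmlt{Q}/M$. Normality of $S$ and the isomorphism $Q/S\cong\rmlt{Q}/M$ then come for free from the first isomorphism theorem for loops, so there is nothing extra to verify in order to see that $S$ is a (normal) subloop.

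Write $G=\rmlt{Q}$. First I would record that $S=1M$. The inclusion $S\subseteq 1M$ is immediate, since $R_x\in M$ gives $x=1R_x\in 1M$. Conversely, let $y=1\varphi$ with $\varphi\in M$ and decompose $\varphi=\psi R_u$ with $\psi\in\rinn{Q}$ and $u\in Q$, using the transversal decomposition of $\rmlt{Q}$ recalled in the preliminaries. Since $\psi$ fixes $1$ we get $y=1\psi R_u=u$, and since $\rinn{Q}\le M$ we get $R_u=\psi^{-1}\varphi\in M$; hence $R_y=R_u\in M$ and $y\in S$.

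The heart of the matter is to show that $\phi\colon Q\to G/M$, $u\mapsto R_uM$, is a surjective loop homomorphism. Let $\nu\colon G\to G/M$ be the canonical projection, which is a group homomorphism precisely because $M\unlhd G$. For all $x,y\in Q$ the element $R_xR_yR_{xy}^{-1}$ fixes $1$ (indeed $1R_xR_yR_{xy}^{-1}=(xy)R_{xy}^{-1}=1$), so it lies in $\rinn{Q}\le M$ and is therefore annihilated by $\nu$. Consequently
\[
    \phi(x)\phi(y)=\nu(R_x)\nu(R_y)=\nu(R_xR_y)=\nu\!\big((R_xR_yR_{xy}^{-1})R_{xy}\big)=\nu(R_{xy})=\phi(xy).
\]
Thus $\phi$ is a homomorphism; its image is a subloop of the group $G/M$, hence a subgroup, and since it contains the generating set $\setof{R_uM}{u\in Q}$ of $G/M$ it is all of $G/M$. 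Finally $\ker{\phi}=\setof{u\in Q}{R_uM=M}=\setof{u\in Q}{R_u\in M}=S$, so $S$ is a normal subloop of $Q$ and $Q/S\cong G/M=\rmlt{Q}/M$.

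The only genuinely subtle step is the homomorphism computation, and it is exactly there that both hypotheses are used essentially: normality $M\unlhd G$ is what makes $G/M$ a group and $\nu$ a homomorphism, while $\rinn{Q}\le M$ is precisely what forces the inner-mapping correction term $R_xR_yR_{xy}^{-1}$ into $M$, so that $\nu$ kills it. Everything else is immediate bookkeeping.
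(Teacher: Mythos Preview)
Your argument is correct. The paper does not actually supply a proof of this lemma; it is quoted verbatim from \cite[2.12]{Aschbacher2006} and used as a black box. Hence there is no in-paper proof to compare against, but your route via the explicit surjection $\phi\colon Q\to \rmlt{Q}/M$, $u\mapsto R_uM$, together with the observation that the right inner mapping $R_xR_yR_{xy}^{-1}$ lies in $\rinn{Q}\le M$, is the standard and expected one.
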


\begin{proposition}\label{Pr:Index}
Let $Q$ be a finite loop such that $\rmlt{Q}$ is nilpotent. Then for any prime $r$ dividing $|Q|$ there is a normal subloop of $Q$ with index $r$.
\end{proposition}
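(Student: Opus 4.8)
The plan is to combine Lemma~\ref{Lm:nilpot} with Lemma~\ref{Lm:Aschbacher}, using the standard group--loop dictionary supplied by the right multiplication group. Set $G=\rmlt{Q}$ and $H=\rinn{Q}$. By hypothesis $G$ is nilpotent, and $H$ is core-free in $G$ by the general fact recalled in Section~2. Since the right section $R_Q$ is a transversal to $H$ in $G$, we have $[G:H]=|Q|$, so every prime $r$ dividing $|Q|$ necessarily divides $|G|$. This preliminary divisibility observation is what makes Lemma~\ref{Lm:nilpot} applicable.

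First I would fix a prime $r\mid |Q|$ and apply Lemma~\ref{Lm:nilpot} to the triple $(G,H,r)$: because $G$ is nilpotent and $H$ is core-free, the lemma produces a normal subgroup $M\unlhd G$ with $H\le M$ and $|G/M|=r$. This is precisely the step where nilpotency is essential, since it is what allows one to descend all the way to index $r$ while keeping the (core-free) subgroup $H$ trapped inside $M$.

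Then I would feed this $M$ into Lemma~\ref{Lm:Aschbacher}. The hypotheses $\rinn{Q}=H\le M\unlhd \rmlt{Q}$ are met verbatim, so $S=\setof{x\in Q}{R_x\in M}=1M$ is a normal subloop of $Q$ with $Q/S\cong \rmlt{Q}/M=G/M$. Reading off cardinalities gives $[Q:S]=|Q/S|=|G/M|=r$, which is exactly the desired normal subloop of index $r$.

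I do not expect a genuine obstacle here: the whole argument is a two-line composition of the two preceding lemmas, and all the real content lives inside them. The only thing warranting care is the bookkeeping at both ends---that $r\mid |Q|$ forces $r\mid |G|$ so that Lemma~\ref{Lm:nilpot} fires, and that the index-$r$ subgroup $M$ it returns transports, via the isomorphism $Q/S\cong G/M$ of Lemma~\ref{Lm:Aschbacher}, to a subloop of index \emph{exactly} $r$ rather than merely a divisor of $r$.
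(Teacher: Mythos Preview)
Your proof is correct and follows exactly the paper's approach: set $G=\rmlt{Q}$, $H=\rinn{Q}$, invoke Lemma~\ref{Lm:nilpot} (using that $H$ is core-free and $r\mid |G|$) to obtain $M\unlhd G$ with $H\le M$ and $[G:M]=r$, then apply Lemma~\ref{Lm:Aschbacher}. The extra bookkeeping you spell out (that $r\mid |Q|=[G:H]$ forces $r\mid |G|$, and that $Q/S\cong G/M$ pins down the index as exactly $r$) is implicit in the paper's two-line proof and is a reasonable clarification.
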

\begin{proof}
Put $G=\rmlt{Q}$ and $H=\rinn{Q}$. Since $H$ is core-free in $G$, there is a normal subgroup $M$ of $G$ of index $r$ containing $H$ by Lemma \ref{Lm:nilpot}. We are done by Lemma \ref{Lm:Aschbacher}.
\end{proof}

\begin{theorem}\label{Th:BolSubloop}
Let $p>q$ be odd primes and let $Q$ be a nonassociative right Bol loop of order $pq$. Then $q$ divides $p^2-1$.
\end{theorem}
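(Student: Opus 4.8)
The plan is to pass to the associated right Bruck loop and split into two cases according to whether it is a group. Since $Q$ has odd order $pq$ it is uniquely $2$-divisible, so by Proposition \ref{Pr:AssociatedBruck} the associated right Bruck loop $(Q,\circ)$ is defined, and it again has order $pq$. If $(Q,\circ)$ is \emph{nonassociative}, then Theorem \ref{Th:Uniqueness} applies directly: a nonassociative right Bruck loop of order $pq$ exists only when $q$ divides $p^2-1$, and we are done. The entire content of the theorem therefore lies in excluding the possibility, flagged at the start of the section, that $(Q,\circ)$ is associative while $(Q,\cdot)$ is not.

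So suppose toward a contradiction that $(Q,\circ)$ is associative. Then Proposition \ref{Pr:HalfGabor} tells us that $\rmlt{Q}$ is nilpotent (of class at most $2$). Once the right multiplication group is nilpotent, Proposition \ref{Pr:Index} produces, for every prime dividing $|Q|=pq$, a normal subloop of that index; taking the prime $p$ yields a normal subloop of index $p$, that is, a normal subloop of order $q$. But Lemma \ref{Lm:Normal_q} asserts that a right Bol loop containing a normal subloop of order $q$ must be associative, contradicting our standing assumption that $Q$ is nonassociative.

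This contradiction forces $(Q,\circ)$ to be nonassociative, and the conclusion $q\mid p^2-1$ then follows from Theorem \ref{Th:Uniqueness} exactly as above. The only genuinely delicate point is the exclusion of the associative-Bruck case, and the key leverage there is the chain Proposition \ref{Pr:HalfGabor} $\to$ nilpotency of $\rmlt{Q}$ $\to$ Proposition \ref{Pr:Index}, which manufactures a normal subloop of order $q$---precisely the configuration ruled out for nonassociative loops by Lemma \ref{Lm:Normal_q}. Everything else is a direct appeal to the uniqueness theorem for Bruck loops, so I expect no further obstacles once the two cases are organized this way.
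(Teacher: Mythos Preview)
Your proof is correct and follows essentially the same route as the paper: reduce to the associated right Bruck loop, invoke Theorem~\ref{Th:Uniqueness} when it is nonassociative, and in the associative case use Proposition~\ref{Pr:HalfGabor} together with Proposition~\ref{Pr:Index} to produce a normal subloop of order $q$, contradicting Lemma~\ref{Lm:Normal_q}. The only cosmetic difference is that the paper remarks in passing that $(Q,\circ)$ is actually abelian (via \eqref{Eq:AIP}), but this is not needed for the argument.
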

\begin{proof}
Let $(Q,\circ)$ be the associated right Bruck loop. Suppose first that $(Q,\circ)$ is associative. Then $(Q,\circ)$ is in fact an abelian group due to the automorphic inverse property, and $G=\rmlt{Q,\cdot}$ is nilpotent by Proposition \ref{Pr:HalfGabor}. Proposition \ref{Pr:Index} yields a normal subgroup $S$ of index $p$ (hence of order $q$) in $(Q,\cdot)$, a contradiction with Lemma \ref{Lm:Normal_q}. Therefore $(Q,\circ)$ is nonassociative, and we are done by Theorem \ref{Th:Uniqueness}.
\end{proof}

\begin{lemma}\label{Lm:NSylow}
Let $p>q$ be odd primes and let $Q$ be a nonassociative right Bol loop of order $pq$. Let $N$ be as in Lemma \ref{Lm:Atp}. Then $|N|=p^iq^j$ with $i\le 3$, $j\le 2$, and $N$ contains a unique Sylow $p$-subgroup.
\end{lemma}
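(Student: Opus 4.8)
The plan is to read off $|N|$ from the projection $\pi_2$ of Lemma \ref{Lm:Atp}. By the proof of Theorem \ref{Th:BolSubloop} the associated right Bruck loop $(Q,\circ)$ is nonassociative, so Theorem \ref{Th:Uniqueness} gives $(Q,\circ)\cong B_{p,q}$ and Proposition \ref{Pr:BruckRmltJ}(iii) gives $\rmlt{Q,\circ}\cong(\Z_p\times\Z_p)\rtimes\Z_q$, a group of order $p^2q$ with a unique, hence normal, Sylow $p$-subgroup $P_0$ of order $p^2$. Since $\pi_2\colon N\to\genof{L_uR_u}{u\in Q}\cong\rmlt{Q,\circ}$ is surjective with $\ker{\pi_2}$ anti-isomorphic to a subgroup $W_2$ of the associative loop $\lnuc{Q}$ (Lemma \ref{Lm:Atp}(ii) and Lemma \ref{Lm:LxRx}), I obtain at once $|N|=|W_2|\cdot p^2q$, so $N$ is a $\{p,q\}$-group and everything reduces to bounding $|W_2|$.

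To bound $|W_2|$ I would invoke the homomorphism $\phi\colon Q\to\Z_q$ with $\ker{\phi}=\langle b\rangle$ built in the proof of Theorem \ref{Th:Normal_p}. For any subgroup $T$ of $Q$ the intersection $T\cap\langle b\rangle$ is a subgroup of $\langle b\rangle\cong\Z_p$ and $\phi(T)$ is a subgroup of $\Z_q$, so $|T|=|T\cap\langle b\rangle|\cdot|\phi(T)|$ divides $pq$. Applying this to $T=W_2$ gives $v_p(|W_2|)\le 1$ and $v_q(|W_2|)\le 1$, whence $|N|=p^{\,2+v_p(|W_2|)}\,q^{\,1+v_q(|W_2|)}$ yields $i\le 3$ and $j\le 2$. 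I expect the $q$-part to be the real obstacle: a crude cardinality bound $|W_2|\le|Q|=pq$ already forces $v_p(|W_2|)\le 1$ (as $pq<p^2$), but it does not exclude $|W_2|=q^2$; it is precisely the existence of the normal subloop $\langle b\rangle$ of order $p$, equivalently the map $\phi$, that forbids a subgroup of order $q^2$ in $Q$.

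For the uniqueness of the Sylow $p$-subgroup I would use that $\ker{\pi_2}\le Z(N)$. Put $K=\pi_2^{-1}(P_0)$; since $P_0\unlhd\rmlt{Q,\circ}$ we have $K\unlhd N$, and $|K|=|W_2|\cdot p^2$ carries the full $p$-part of $|N|$. The Sylow $q$-subgroup $C$ of $K$ is contained in $\ker{\pi_2}\le Z(N)$, hence is central in $K$ and is its unique normal Sylow $q$-subgroup; by Schur--Zassenhaus $C$ has a complement $P_K$ in $K$, and centrality of $C$ forces $K=C\times P_K$. Thus $P_K$ is the unique Sylow $p$-subgroup of $K$ and is characteristic in $K$, so $K\unlhd N$ gives $P_K\unlhd N$; as $|P_K|$ is the full $p$-part of $|N|$, $P_K$ is a Sylow $p$-subgroup of $N$ and, being normal, the unique one.
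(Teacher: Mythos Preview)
Your proof is correct and, for the order bound, follows the paper's approach: both use the surjection $\pi_2$ onto $\rmlt{Q,\circ}$ and bound $|\ker{\pi_2}|$ by the size of a subloop $W_2\le\lnuc{Q}$. Your explicit use of the homomorphism $\phi$ from Theorem~\ref{Th:Normal_p} to force $|W_2|\mid pq$ is a nice justification of a step the paper leaves implicit. (The paper also keeps the case that $(Q,\circ)$ is associative, which you correctly note is already excluded by the proof of Theorem~\ref{Th:BolSubloop}.)

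Where you diverge is the Sylow argument, and here you work much harder than necessary. Once you know $|N|=p^iq^j$ with $j\le 2$, Sylow's theorem finishes immediately: the number $n_p$ of Sylow $p$-subgroups divides $q^j$ and satisfies $n_p\equiv 1\pmod p$, but since $p>q$ are odd primes we have $p>q+1$, so $p\nmid q-1$ and $p\nmid q+1$, hence $p\nmid q^2-1$ and $n_p=1$. This is the paper's one-line argument. Your route through $\ker{\pi_2}\le Z(N)$, the preimage $K=\pi_2^{-1}(P_0)$, and Schur--Zassenhaus is valid but avoidable; its only merit is that it would survive without the bound $j\le 2$, which you have already secured anyway.
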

\begin{proof}
Let $(Q,\circ)$ be the associated right Bruck loop, and let $K_2=\ker{\pi_2}$. By Lemma \ref{Lm:Atp}, $N/K_2\cong\rmlt{Q,\circ}$ and $K_2$ corresponds (in cardinality) to a subloop of $\lnuc{Q}$. If $(Q,\circ)$ is associative then $|\rmlt{Q,\circ}|=pq$, otherwise $|\rmlt{Q,\circ}|=p^2q$ by Proposition \ref{Pr:BruckRmltJ}. Thus $|N|=p^iq^j$ with $i\le 3$, $j\le 2$. Since $p$ does not divide $q^2-1 = (q-1)(q+1)$, $N$ contains a unique Sylow $p$-subgroup.
\end{proof}

\begin{remark}
Here is an alternative proof of Theorem \ref{Th:Normal_p} that does not require most of Section \ref{Sc:First}. We can assume that $Q$ is a nonassociative right Bol loop. Let us resume the argument of Lemma \ref{Lm:NSylow}. Let $P$ be the unique Sylow $p$-subgroup of $N$. We have $N'\le P$ because $|N/P|\in\{q$, $q^2\}$.

Let $K_1=\ker{\pi_1}$. If $K_1\ne 1$ then $\rnuc{Q,\cdot}$ is nontrivial by Lemma \ref{Lm:Atp}, necessarily a normal subloop of order $p$ by Lemma \ref{Lm:Normal_q}. We can therefore suppose that $K_1=1$. Let $G=\rmlt{Q,\cdot}$. Lemma \ref{Lm:Atp} yields $N\cong G$, so $N'\cong G'$ is a $p$-group. Consider the normal subgroup $M=\rinn{Q,\cdot}G'$ of $G$. Since $\rinn{Q,\cdot}$ is core-free in $G$, it does not contain $G'$ and $|M|=p|\rinn{Q,\cdot}|=|G|/q$. By Lemma \ref{Lm:Aschbacher}, $S=1M$ is a normal subloop of $Q$ such that $|Q/S|= |G/M|=q$, so $|S|=p$.
\end{remark}

\subsection{Triviality of the right and middle nuclei}

Recall that in every right inverse property loop $Q$ the right nucleus coincides with the middle nucleus. In this subsection we prove that in a nonassociative right Bol loop of order $pq$ the right and middle nuclei are trivial.

\begin{lemma}\label{Lm:Drapal}
Let $Q$ be a loop, let $S\leq \rnuc{Q}\cap \mnuc{Q}$, and suppose that for some $u\in Q$ we have $S T_u = S$. Then $T_u|_S$ is an automorphism of $S$.
\end{lemma}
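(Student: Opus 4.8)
The plan is to verify that $T_u|_S$ is a bijective endomorphism of $S$. Since $T_u = R_uL_u^{-1}$ is a composition of bijections of $Q$, it is itself a bijection of $Q$; combined with the hypothesis $ST_u = S$, this already shows that $T_u|_S$ is a bijection of $S$. It therefore suffices to prove that $T_u$ preserves products of elements of $S$, i.e. that $(xy)T_u = (xT_u)(yT_u)$ for all $x$, $y\in S$.

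First I would record the defining identity of $T_u$ in equational form. Applying maps on the right, $xT_u = xR_uL_u^{-1} = (xu)L_u^{-1} = u\backslash(xu)$, so for every $x\in Q$ we have
\[
    u\cdot(xT_u) = xu.
\]
Now fix $x$, $y\in S$ and set $x' = xT_u$, $y' = yT_u$. By $ST_u = S$, both $x'$ and $y'$ lie in $S$, hence in $\mnuc{Q}\cap\rnuc{Q}$, exactly as do $x$ and $y$. The key computation is the chain
\[
    u(x'y') = (ux')y' = (xu)y' = x(uy') = x(yu) = (xy)u,
\]
where the first equality uses $x'\in\mnuc{Q}$, the second uses $ux' = xu$, the third uses $y'\in\rnuc{Q}$, the fourth uses $uy' = yu$, and the last uses $y\in\mnuc{Q}$. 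Comparing this with $u\cdot((xy)T_u) = (xy)u$ and cancelling $u$ on the left (left translations are injective) yields $x'y' = (xy)T_u$, that is $(xT_u)(yT_u) = (xy)T_u$. Thus $T_u|_S$ is a bijective homomorphism, hence an automorphism of $S$.

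I do not expect a genuine obstacle here beyond careful bookkeeping of which nucleus is invoked at each step. The one point worth emphasising is that the argument needs the images $x'$ and $y'$ to lie in both $\mnuc{Q}$ and $\rnuc{Q}$, and this is precisely what the hypothesis $ST_u = S$ guarantees by keeping the images inside $S$. This is also the structural reason why both nuclei, rather than just one, must appear in the statement: the middle-nucleus property is used on $x'$ and on $y$, while the right-nucleus property is used on $y'$, so neither condition alone would close the computation.
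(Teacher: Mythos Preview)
Your proof is correct and follows essentially the same computation as the paper's: both arguments verify $u\cdot(xT_u)(yT_u)=(xy)u$ via the chain using $xT_u\in\mnuc{Q}$, the identity $u\cdot zT_u=zu$, $yT_u\in\rnuc{Q}$, and $y\in\mnuc{Q}$. You additionally spell out the bijectivity of $T_u|_S$, which the paper leaves implicit.
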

\begin{proof}
For $c_1$, $c_2\in S$, we have $u(c_1T_u \cdot c_2T_u) = (u\cdot c_1T_u)\cdot c_2T_u  = c_1 u\cdot c_2T_u = c_1 (u\cdot c_2T_u) = c_1\cdot c_2 u = c_1 c_2\cdot u$. Thus $c_1T_u \cdot c_2T_u = (c_1 c_2)T_u$, as claimed.
\end{proof}

\begin{lemma}
Let $Q$ be a right Bol loop. Then:
\begin{enumerate}
    \item[(i)] For each $u\in Q$, $T_u|_{\rnuc{Q}}$ is an automorphism of $\rnuc{Q}$.
    \item[(ii)] For each $u\in \rnuc{Q}$, $v\in Q$ and $n\ge 0$ we have $uT_{v^n} = uT_v^n$.
\end{enumerate}
\end{lemma}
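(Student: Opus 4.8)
For part (i), the key facts are that a right Bol loop has the right inverse property, so $\rnuc{Q} = \mnuc{Q}$, and that $\rnuc{Q}$ is normal in $Q$ (cited in the preliminaries). The plan is to verify the hypotheses of Lemma \ref{Lm:Drapal} with $S = \rnuc{Q}$. The containment $S \le \rnuc{Q}\cap\mnuc{Q}$ is immediate from $\rnuc{Q}=\mnuc{Q}$, so the only thing to check is $\rnuc{Q}\,T_u = \rnuc{Q}$. This I would get from normality: since $\rnuc{Q}$ is normal it is invariant under the inner mapping $T_u$. Concretely, $\rnuc{Q}\,u = u\,\rnuc{Q}$ gives, for $w\in\rnuc{Q}$, an element $w'\in\rnuc{Q}$ with $wu = uw'$, whence $wT_u = u\backslash(wu) = u\backslash(uw') = w'\in\rnuc{Q}$; the same computation with $T_u^{-1}=L_uR_u^{-1}$ shows $\rnuc{Q}\,T_u^{-1}\subseteq\rnuc{Q}$, so $T_u$ restricts to a bijection of $\rnuc{Q}$. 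Lemma \ref{Lm:Drapal} then yields at once that $T_u|_{\rnuc{Q}}$ is an automorphism.

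For part (ii), the cleanest route I see is to prove, by induction on $n$, the single equational identity
\[
    v^n\,(w T_v^n) = w\,v^n \qquad\text{for all } w\in\rnuc{Q},\ n\ge 0.
\]
Once this is in hand the conclusion is immediate: applying $L_{v^n}^{-1}$ to both sides gives $wT_v^n = v^n\backslash(wv^n) = (wv^n)L_{v^n}^{-1} = wR_{v^n}L_{v^n}^{-1} = wT_{v^n}$, which is exactly the assertion. The base cases $n=0,1$ are trivial, the case $n=1$ being just $v(wT_v)=wv$, which holds in any loop since $wT_v = v\backslash(wv)$.

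For the inductive step I would set $w_1 = wT_v$, which lies in $\rnuc{Q}$ by part (i), so the induction hypothesis applies to $w_1$ and gives $v^n(wT_v^{n+1}) = v^n(w_1T_v^n) = w_1 v^n$. I then compute
\[
    v^{n+1}(wT_v^{n+1}) = v\bigl(v^n(wT_v^{n+1})\bigr) = v(w_1 v^n) = (vw_1)v^n = (wv)v^n = w\,v^{n+1},
\]
where the first equality splits $v^{n+1}=v\cdot v^n$ using $wT_v^{n+1}\in\rnuc{Q}$ (the right nucleus property), the second is the induction hypothesis, the third uses $w_1\in\mnuc{Q}=\rnuc{Q}$ (the middle nucleus property), the fourth is the defining relation $vw_1 = v(wT_v) = wv$, and the last uses the right power alternative property $R_vR_{v^n} = R_v^{n+1} = R_{v^{n+1}}$.

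The routine part is the bookkeeping of which structural property licenses each rearrangement; the genuine obstacle — and the reason the statement is not a triviality — is that none of these products associate freely. The argument goes through precisely because at every step the factor being moved is either a power of $v$ (handled by right power-alternativity) or an element of $\rnuc{Q}=\mnuc{Q}$ (handled by the two nucleus identities), and part (i) is exactly what guarantees that the iterates $wT_v,\,wT_v^2,\dots$ never leave the nucleus, keeping the induction inside the regime where these identities are available.
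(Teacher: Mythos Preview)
Your proof is correct and follows essentially the same route as the paper: part~(i) is Lemma~\ref{Lm:Drapal} applied to $S=\rnuc{Q}=\mnuc{Q}$ (with the $T_u$-invariance coming from normality of the right nucleus), and part~(ii) is the same induction establishing $v^n(wT_v^n)=wv^n$ via the nucleus identities and right power-alternativity. The only cosmetic difference is that the paper peels $v$ off on the right ($v^{n+1}=v^n\cdot v$) and applies the induction hypothesis to $w$ itself, whereas you peel on the left and apply it to $w_1=wT_v$; the ingredients and the logic are identical.
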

\begin{proof}
Part (i) follows immediately from Lemma \ref{Lm:Drapal}. Let us prove (ii) by induction on $n$, the case $n=0$ being obvious. Using the fact that $u\in\rnuc{Q}=\mnuc{Q}$, part (i), the inductive step and the right power alternative property, we see that $v^{n+1} \cdot uT_v^{n+1} = v^n(v\cdot uT_v^{n+1}) = v^n(uT_v^n\cdot v) = (v^n\cdot uT_v^n) v = u v^n\cdot v = u v^{n+1}$.
\end{proof}

\begin{theorem}\label{Th:RightNucleus}
Let $p>q$ be odd primes and let $Q$ be a nonassociative right Bol loop of order $pq$. Then $\rnuc{Q}=\mnuc{Q}=1$.
\end{theorem}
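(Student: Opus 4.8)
My plan is to reduce the statement to excluding a right nucleus of order $p$. Since every right Bol loop has the right inverse property, $\rnuc{Q}=\mnuc{Q}$, so it suffices to prove $\rnuc{Q}=1$. Recall that the right nucleus is a normal subloop of a right Bol loop, so $|\rnuc{Q}|$ divides $pq$; I would first note that nonassociativity rules out $\rnuc{Q}=Q$, while a normal subloop of order $q$ would force $Q$ to be associative by Lemma \ref{Lm:Normal_q}. The only remaining case is $|\rnuc{Q}|=p$, and then $\rnuc{Q}$ must be the unique subloop $\langle b\rangle$ of order $p$ guaranteed by Corollary \ref{Cr:NiRo}. The whole argument therefore comes down to deriving a contradiction from the assumption $\rnuc{Q}=\langle b\rangle$.

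The heart of the proof is to recognize the complete mappings $\theta_i$ of \eqref{Eq:Theta} as the restrictions of the maps $T_{a^i}$ to $\langle b\rangle$. Indeed, $b^jT_{a^i}=a^i\backslash(b^ja^i)=b^{j\theta_i}$, so $\theta_i$ is precisely $T_{a^i}|_{\langle b\rangle}$ written on exponents. Assuming $\rnuc{Q}=\langle b\rangle$, part (i) of the lemma preceding this theorem tells me that each $T_{a^i}$ restricts to an automorphism of $\langle b\rangle\cong\Z_p$; since $\aut{\Z_p}$ consists of the multiplications by units, every $\theta_i$ is then linear, say $j\theta_i=\lambda_i j$ with $\lambda_i\in\F_p^\ast$. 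Next I would invoke part (ii) of the same lemma, which gives $b^jT_{a^i}=b^jT_a^i$, that is, $\theta_i=\theta_1^i$ as automorphisms of $\langle b\rangle$; setting $\lambda=\lambda_1$ yields $\lambda_i=\lambda^i$ and hence the multiplicativity relation $\theta_{i+k}=\theta_i\theta_k$ for all $i$, $k$.

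At this point both hypotheses of Corollary \ref{Cr:LinearAssociative} hold, namely that every $\theta_i$ is linear and that $\theta_{i+k}=\theta_i\theta_k$, so that corollary forces $Q$ to be associative, contradicting our standing assumption. This eliminates $|\rnuc{Q}|=p$, leaving $\rnuc{Q}=\mnuc{Q}=1$. I expect the only delicate point to be the bookkeeping identification $\theta_i=T_{a^i}|_{\langle b\rangle}$ together with the two consequences extracted from the preceding lemma: linearity of each $\theta_i$ from part (i), and the multiplicativity relation from part (ii). Once these are in hand the conclusion is immediate, since the genuinely hard structural facts, namely the uniqueness and normality of the subloop of order $p$ and the multiplication formula of Proposition \ref{Pr:MultiplicationFormula}, are already available.
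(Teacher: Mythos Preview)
Your proposal is correct and follows essentially the same route as the paper: after reducing to the case $\rnuc{Q}=\langle b\rangle$, both arguments use that $T_{a^i}$ restricts to an automorphism of $\langle b\rangle$ (giving linearity of $\theta_i$) and the identity $T_{a^i}|_{\langle b\rangle}=T_a^i|_{\langle b\rangle}$ (giving $\theta_i=\theta_1^i$, hence $\theta_{i+k}=\theta_i\theta_k$), and then invoke Corollary~\ref{Cr:LinearAssociative}. Your write-up is in fact slightly more transparent than the paper's, which cites only Lemma~\ref{Lm:Drapal} while silently using part~(ii) of the subsequent lemma for the step $bT_{a^i}=bT_a^i$.
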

\begin{proof}
We know that $\rnuc{Q}$ is a normal subloop of $Q$. If $|\rnuc{Q}|=1$, we are done. If $|\rnuc{Q}|=q$ then $Q$ is associative by Lemma \ref{Lm:Normal_q}, a contradiction. We can therefore assume that $|\rnuc{Q}|=p$, and we must have $\rnuc{Q}=\langle b\rangle$, using the notation of Section \ref{Sc:First}. By Lemma \ref{Lm:Drapal}, we get
\[
    b^{j\theta_i} = b^jT_{a^i} = (bT_{a^i})^j = (bT_a^i)^j = (b^{1\theta_1^i})^j = b^{j\cdot 1\theta_1^i}.
\]
Thus $j\theta_i = j\cdot 1\theta_1^i$ for every $i$, $j$.

Now, if we set $t = 1\theta_1$, then we claim that $1\theta_1^i = t^i$ for all $i\ge 0$. This is clear for $i=0$, so assuming it for $i\ge 0$, we have $1\theta_1^{i+1} = 1\theta_1^i \theta_1 = t^i\theta_1 = t^i\cdot 1\theta_1 = t^{i+1}$.

Summarizing, we have $j\theta_i = t^i j$. By Corollary \ref{Cr:LinearAssociative}, $Q$ is associative, a contradiction.
\end{proof}

\subsection{The left nucleus}

Let $p>q$ be odd primes and let $Q$ be a nonassociative right Bol loop of order $pq$. In this subsection we prove that $|\rmlt{Q}|\in\{p^2q$, $p^3q\}$ and that $\lnuc{Q}$ is a normal subloop of $Q$ isomorphic to $\Z_p$.

\begin{proposition}\label{Pr:RMltBol}
Let $p>q$ be odd primes and let $Q$ be a nonassociative right Bol loop of order $pq$. Let $G=\rmlt{Q}$. Then:
\begin{enumerate}
\item[(i)] $G$ contains a unique Sylow $p$-subgroup.
\item[(ii)] $|G|=p^kq$, where $k\in\{2,\,3\}$.
\end{enumerate}
\end{proposition}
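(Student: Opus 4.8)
The plan is to transfer everything we need from the autotopism group $N$ of Lemma~\ref{Lm:Atp} to $G=\rmlt{Q}$ itself. The key observation is that the projection $\pi_1\colon N\to\rmlt{Q,\cdot}=G$ is not merely surjective but an isomorphism: by Lemma~\ref{Lm:Atp}(i) its kernel is isomorphic to a subloop $W_1$ of $\rnuc{Q}$, and $\rnuc{Q}=1$ by Theorem~\ref{Th:RightNucleus}, so $\ker{\pi_1}=1$. Hence $N\cong G$, and Lemma~\ref{Lm:NSylow} applies verbatim to $G$: we obtain $|G|=p^iq^j$ with $i\le 3$, $j\le 2$, and $G$ has a unique Sylow $p$-subgroup. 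This already yields part~(i) together with the upper bounds on the exponents.

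For part~(ii), I would first pin down the power of $q$. Both $p$ and $q$ divide $|G|$ since $R_b,R_a\in G$ have orders $p$ and $q$ respectively, so $i,j\ge 1$. Let $P$ be the unique (hence normal) Sylow $p$-subgroup of $G$. As $R_b$ has order $p$, we have $\langle R_b\rangle\le P$, and since $G=\langle R_a,R_b\rangle$ by Corollary~\ref{Cr:Factorizations}, it follows that $G=P\langle R_a\rangle$. Therefore $G/P$ is generated by the image of $R_a$ and has order dividing $q$, forcing $j=1$ and $|G|=p^iq$ with $1\le i\le 3$.

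It remains to exclude $i=1$. Suppose $|G|=pq$. The right section $R_Q=\setof{R_u}{u\in Q}$ lies in $G$ and has exactly $|Q|=pq$ elements, because $u\mapsto R_u$ is injective; hence $R_Q=G$. Then $R_Q$ is closed under multiplication, so for all $u,v\in Q$ there is $w$ with $R_uR_v=R_w$, and evaluating at $1$ gives $w=uv$. Thus $R_uR_v=R_{uv}$ for all $u,v$, which says $Q$ is associative, contradicting our standing hypothesis. Therefore $i\in\{2,3\}$ and $|G|=p^kq$ with $k\in\{2,3\}$, as claimed.

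The substantive work has already been carried out in Lemmas~\ref{Lm:Atp} and~\ref{Lm:NSylow}, so I do not expect genuine difficulty here; the one step that carries the argument is the clean identification $N\cong G$, which hinges entirely on the triviality of the right nucleus from Theorem~\ref{Th:RightNucleus}. Everything after that is bookkeeping with Sylow theory and the defining property of the right section.
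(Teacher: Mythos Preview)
Your proof is correct and follows essentially the same approach as the paper: identify $N\cong G$ via $\ker{\pi_1}=1$ (using Theorem~\ref{Th:RightNucleus}), invoke Lemma~\ref{Lm:NSylow} for the Sylow $p$-subgroup and the exponent bounds, then use $G=P\langle R_a\rangle$ to force $j=1$, and finally rule out $|G|=pq$ by associativity. Your elimination of the case $|G|=pq$ is spelled out in slightly more detail than the paper's (which simply says this forces $Q\cong\Z_{pq}$), but the content is identical.
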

\begin{proof}
(i) Let $N$, $\pi_1$, $\pi_2$ be as in Lemma \ref{Lm:Atp} and recall that $\pi_1$ is onto $G$, $\ker{\pi_1}$ is isomorphic to a subloop of $\rnuc{Q,\cdot}$, and $\rnuc{Q,\cdot}=1$ by Theorem \ref{Th:RightNucleus}. Therefore $N\cong G$ and we are done by Lemma \ref{Lm:NSylow}.

(ii) Let $P$ be the unique Sylow $p$-subgroup of $G$. Let $a$, $b\in (Q,\cdot)$ be such that $|a|=q$ and $|b|=p$. Then $G=\langle R_a,\,R_b\rangle$ by Corollary \ref{Cr:Factorizations}. Since $|R_a|=q$ and $|R_b|=p$, we have $G=\langle R_a\rangle P$ and $|G|=p^k q$ with $1\le k\le 3$. The
case $|G|=pq$ leads to $Q\cong\Z_{pq}$, a contradiction.
\end{proof}

\begin{theorem}\label{Th:LNuc}
Let $p>q$ be odd primes and let $Q$ be a nonassociative right Bol loop of order $pq$. Then $\lnuc{Q}\cong \Z_p$.
\end{theorem}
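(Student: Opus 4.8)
The plan is to prove the two inclusions separately: first that the unique normal subloop $S$ of order $p$ (the subloop $\langle b\rangle$, normal by Theorem \ref{Th:Normal_p}) is contained in $\lnuc{Q}$, and then that $\lnuc{Q}$ can be no larger than $S$. The containment $S\subseteq\lnuc{Q}$ is where the real content lies; the reverse is a short counting argument with the quotient homomorphism onto $\Z_q$.

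For the containment I would exploit the characterization $\lnuc{Q}=\setof{u\in Q}{u\varphi=u\text{ for every }\varphi\in\rinn{Q}}$ recorded in the preliminaries, so that $\lnuc{Q}$ is exactly the fixed-point set of the natural right action of $\rinn{Q}$ on $Q$. The crucial structural input is that $\rinn{Q}$ is a $p$-group: since $\rmlt{Q}$ acts transitively on $Q$ with point stabilizer $\rinn{Q}$, orbit--stabilizer together with Proposition \ref{Pr:RMltBol} gives $|\rinn{Q}|=|\rmlt{Q}|/|Q|=p^{k-1}$ with $k\in\{2,3\}$. Because $S$ is normal in $Q$, it is invariant under $\rinn{Q}$; concretely, writing $S=1M$ with $\rinn{Q}\le M\unlhd\rmlt{Q}$ as in Lemma \ref{Lm:Aschbacher}, one has $1m\varphi\in 1M=S$ for every $m\in M$ and $\varphi\in\rinn{Q}$. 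Thus $\rinn{Q}$ permutes the $p$-element set $S$. Now the standard fixed-point congruence for a $p$-group acting on a finite set yields that the number of fixed points of $\rinn{Q}$ in $S$ is $\equiv|S|=p\equiv 0\pmod p$; since $1\in S$ is fixed, the count is a positive multiple of $p$ not exceeding $p$, hence equals $p$. Therefore every element of $S$ is fixed by $\rinn{Q}$, i.e. $S\subseteq\lnuc{Q}$.

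To finish, I would invoke the homomorphism $\phi:Q\to\Z_q$ with kernel $S$ from Theorem \ref{Th:Normal_p}. The left nucleus is a subloop, and it is proper since $Q$ is nonassociative (were $\lnuc{Q}=Q$, associativity would hold). As $S\subseteq\lnuc{Q}$, the restriction $\phi|_{\lnuc{Q}}$ has kernel $S$ and image a subloop of $\Z_q$, so $|\lnuc{Q}|=|S|\cdot|\phi(\lnuc{Q})|$. The only subloops of $\Z_q$ are $\{0\}$ and $\Z_q$; a nontrivial image would force $|\lnuc{Q}|=pq$ and hence $\lnuc{Q}=Q$, contradicting nonassociativity. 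Thus the image is trivial, $\lnuc{Q}\subseteq S$, and combined with the previous paragraph $\lnuc{Q}=S\cong\Z_p$.

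The step I expect to require the most care is verifying that $\rinn{Q}$, and not merely $\rmlt{Q}$, genuinely stabilizes $S$ as a set, since the fixed-point congruence is applied to the action of $\rinn{Q}$ alone; this is precisely where normality of $S$ is used, through the $M$-description of normal subloops in Lemma \ref{Lm:Aschbacher}. Once that is in hand, the $p$-group fixed-point count and the $\phi$-argument are routine.
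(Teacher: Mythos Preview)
Your proof is correct and follows essentially the same approach as the paper: both use Proposition~\ref{Pr:RMltBol} to deduce that $\rinn{Q}$ is a $p$-group, apply the $p$-group fixed-point congruence to obtain $|\lnuc{Q}|\ge p$, and then rule out $\lnuc{Q}=Q$ by nonassociativity. The only difference is that the paper applies the congruence directly to the action of $\rinn{Q}$ on all of $Q$ (with $|Q|=pq\equiv 0\pmod p$), which avoids the detour through the $\rinn{Q}$-invariance of $S$.
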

\begin{proof}
By Proposition \ref{Pr:RMltBol}, $G=\rmlt{Q}$ is of order $p^2q$ or $p^3q$. Since $|G|=|Q|\cdot |\rinn{Q}|$, it follows that $\rinn{Q}$ is a $p$-group. Consequently, every orbit of $\rinn{Q}$ has a size that is a power of $p$. Since $1$ is a fixed point of $\rinn{Q}$, there must be at least $p-1$ additional fixed points of $\rinn{Q}$. Now, $\lnuc{Q}$ consists precisely of the fixed points of $\rinn{Q}$.

We have shown that $|\lnuc{Q}|\ge p$, so either $|\lnuc{Q}|=p$ and we are done, or $|\lnuc{Q}|=pq$, $\lnuc{Q}=Q$, a contradiction.
\end{proof}

\subsection{The complete mappings $\theta_i$ are linear}

Niederreiter and Robinson obtained a number of results for right Bol loops of order $pq$ for which every complete mapping $\theta_i$ of \eqref{Eq:Mult} is linear. We now prove that every $\theta_i$ must be linear should \eqref{Eq:Mult} yield a right Bol loop.

\begin{lemma}\label{Lm:AuxLNuc}
Let $p>q$ be odd primes, and let $Q$ be a groupoid defined on $\Z_q\times \Z_p =\langle a\rangle\times\langle b\rangle$ by \eqref{Eq:Mult}, where every $\theta_i$ is a complete mapping of $\Z_p$. Then $b^j\cdot a^kb^\ell = b^ja^k\cdot b^\ell$ holds for every $j$, $\ell$ if and only if $\theta_k$ is linear.
\end{lemma}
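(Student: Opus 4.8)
The plan is to compute both sides of the putative identity directly from the multiplication formula \eqref{Eq:Mult} and to compare the resulting exponents of $b$. Since both products manifestly have the same $a$-component $a^k$, the whole question reduces to a single equation in the exponent of $b$, and I expect that equation to collapse precisely to the additivity of $\theta_k$.

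First I would evaluate the left-hand product $b^j\cdot a^kb^\ell$. Writing $b^j=a^0b^j$ and applying \eqref{Eq:Mult} with $i=0$, using $\theta_0=1$ (Lemma \ref{Lm:BasicThetaProperties}), I obtain $b^j\cdot a^kb^\ell = a^k b^{m+(j+m)\theta_k}$, where $m$ is the auxiliary element determined by $m+m\theta_k=\ell$.

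Next I would evaluate the right-hand product $b^ja^k\cdot b^\ell$ in two steps. Computing $b^ja^k$ via \eqref{Eq:Mult} (here the second factor is $a^kb^0$, so $\ell=0$ forces the auxiliary element to be $0$, since $0\theta_k=0$ by Lemma \ref{Lm:BasicThetaProperties}) yields $b^ja^k=a^kb^{j\theta_k}$. Then multiplying on the right by $b^\ell=a^0b^\ell$ and again invoking \eqref{Eq:Mult} (now with $i=k$, $\theta_0=1$ in the second factor, so the new auxiliary element $m'$ satisfies $2m'=\ell$ and the factor $\theta_k^{-1}\theta_{k+0}$ is the identity) simplifies to $b^ja^k\cdot b^\ell = a^k b^{\ell+j\theta_k}$.

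Comparing the two exponents and substituting $\ell=m+m\theta_k$, the identity $b^j\cdot a^kb^\ell = b^ja^k\cdot b^\ell$ becomes $(j+m)\theta_k = j\theta_k+m\theta_k$. Because $\theta_k$ is a complete mapping, the map $m\mapsto m+m\theta_k$ is a bijection of $\Z_p$, so as $\ell$ ranges over $\Z_p$ the auxiliary $m$ ranges over all of $\Z_p$ as well; hence validity of the identity for all $j,\ell$ is equivalent to additivity of $\theta_k$ for all $j,m$, which is exactly linearity. The only delicate point is the bookkeeping of the distinct auxiliary variables in the three applications of \eqref{Eq:Mult} and the verification that the quantifier ``for all $\ell$'' faithfully translates into ``for all $m$''; beyond this careful tracking there is no genuine obstacle.
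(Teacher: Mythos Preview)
Your proof is correct and follows essentially the same route as the paper: compute each side via \eqref{Eq:Mult}, compare the $b$-exponents, and use the bijectivity of $m\mapsto m+m\theta_k$ to convert the quantifier over $\ell$ into one over $m$. The only cosmetic difference is that the paper first records the shortcut $a^ib^k\cdot b^\ell=a^ib^{k+\ell}$, whereas you compute $a^kb^{j\theta_k}\cdot b^\ell$ by a direct second application of \eqref{Eq:Mult}; both give $a^kb^{j\theta_k+\ell}$.
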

\begin{proof}
First note that \eqref{Eq:Mult} implies $a^ib^k\cdot b^\ell = a^ib^{k+\ell}$ for every $i$, $k$, $\ell$. Now fix $j$, $k$, $\ell$ and let $m$ be such that $m+m\theta_k=\ell$. Then $b^j\cdot a^k b^\ell = a^kb^{m+(j+m)\theta_k}$, while $b^ja^k\cdot b^\ell = a^kb^{j\theta_k}\cdot b^\ell = a^kb^{j\theta_k+\ell}$. Therefore $b^j\cdot a^kb^\ell = b^ja^k\cdot b^\ell$ holds if and only if $m+(j+m)\theta_k = j\theta_k+\ell = j\theta_k + m+ m\theta_k$, or, equivalently, $(j+m)\theta_k = j\theta_k+m\theta_k$. Note that as $\ell$ ranges over $\Z_p$, so does $m$.
\end{proof}

\begin{theorem}\label{Th:Linear}
Let $p>q$ be odd primes and let $Q$ be a right Bol loop of order $pq$. Then all complete mappings $\theta_i$ of \eqref{Eq:Mult} are linear.
\end{theorem}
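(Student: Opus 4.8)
The plan is to connect linearity of the maps $\theta_k$ to a left-nucleus condition via Lemma~\ref{Lm:AuxLNuc}. That lemma states that $\theta_k$ is linear precisely when $b^j\cdot a^kb^\ell = b^ja^k\cdot b^\ell$ holds for all $j$, $\ell$. Since this is exactly the assertion that $b^j$ left-associates with the pair $(a^k,b^\ell)$, it suffices to show that every power $b^j$ lies in the left nucleus $\lnuc{Q}$; equivalently, that $\langle b\rangle\subseteq\lnuc{Q}$. Once this inclusion is in hand, the identity $b^j\cdot a^kb^\ell = (b^j\cdot a^k)b^\ell$ follows immediately from the definition of $\lnuc{Q}$ by taking $v=a^k$ and $w=b^\ell$ in $b^j(vw)=(b^jv)w$, and Lemma~\ref{Lm:AuxLNuc} then yields linearity of $\theta_k$ for every $k$.

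First I would dispose of the associative case, in which the required identity is trivial, so that every $\theta_k$ is automatically linear. For the remainder, assume $Q$ is nonassociative. The key input is Theorem~\ref{Th:LNuc}, which gives $\lnuc{Q}\cong\Z_p$; in particular $\lnuc{Q}$ is a subloop of order $p$. On the other hand, $\langle b\rangle$ is a subloop of order $p$ because $|b|=p$, and by Corollary~\ref{Cr:NiRo} (ultimately Lemma~\ref{Lm:RightPA}) the loop $Q$ has a \emph{unique} subloop of order $p$. Comparing these two facts forces $\lnuc{Q}=\langle b\rangle$, so indeed $b^j\in\lnuc{Q}$ for every $j$, which completes the argument.

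The proof therefore carries essentially no computational content beyond the bookkeeping already isolated in Lemma~\ref{Lm:AuxLNuc}. The only point requiring care---and where all the earlier structural work is being cashed in---is the identification $\lnuc{Q}=\langle b\rangle$: this rests simultaneously on the computation that $\lnuc{Q}$ has order exactly $p$ (Theorem~\ref{Th:LNuc}, itself relying on $|\rmlt{Q}|\in\{p^2q,p^3q\}$ and hence on the triviality of the right and middle nuclei) and on the uniqueness of the order-$p$ subloop. I do not expect a genuinely new obstacle to arise here; this step is the payoff of the preceding sections rather than a fresh difficulty.
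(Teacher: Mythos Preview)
Your proposal is correct and follows essentially the same route as the paper: invoke Theorem~\ref{Th:LNuc} to see that $\lnuc{Q}$ has order $p$, identify it with $\langle b\rangle$ via the uniqueness of the order-$p$ subloop, and then apply Lemma~\ref{Lm:AuxLNuc}. The only cosmetic difference is that the paper phrases the identification as ``$\langle b\rangle$ accounts for all elements of order $p$'' rather than ``unique subloop of order $p$,'' and leaves the associative case implicit.
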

\begin{proof}
By Theorem \ref{Th:LNuc}, $\lnuc{Q}$ contains a subloop of order $p$. Since $\langle b\rangle$ accounts for all elements of order $p$ in $Q$, we have $\langle b\rangle\le\lnuc{Q}$. We are done by Lemma \ref{Lm:AuxLNuc}.
\end{proof}

\section{The right Bol loops of order $pq$ up to isomorphism}

In this section we present a classification of right Bol loops of order $pq$ up to isomorphism.

\subsection{An abstract construction}

The following is motivated by Lemma \ref{Lm:BasicThetaProperties} and Proposition \ref{Pr:MultiplicationFormula}.

Let $p>q$ be odd primes, and let $\Theta = \setof{\theta_i}{i\in \Z_q}$ be a collection of complete mappings of $\Z_p$ such that $\theta_0=1$ and $0\theta_i=0$ for every $i\in \Z_q$. Define multiplication on $\Z_q\times \Z_p$ by
\[
    (i,j)(k,\ell) = (i+k,\ m+(j+m)\theta_i^{-1}\theta_{i+k}),
\]
where $m+m\theta_k=\ell$.

We have $(0,0)(k,\ell) = (k,m+m\theta_0^{-1}\theta_k) = (k,m+m\theta_k)$, where $m+m\theta_k=\ell$, so $(0,0)(k,\ell)=(k,\ell)$. Similarly, $(i,j)(0,0) = (i,n+(j+n)\theta_i^{-1}\theta_i) = (i,2n+j)$, where $2n = n+n\theta_0=0$, so $(i,j)(0,0)=(i,j)$. Therefore $(0,0)$ is the identity element.

We claim that all right translations biject. Given $(k,\ell)$, $(u,v)$, we need to find $(i,j)$ such that $(i,j)(k,\ell) = (i+k,m+(j+m)\theta_i^{-1}\theta_{i+k}) = (u,v)$, where $m+m\theta_k=\ell$ is determined by $k$, $\ell$. We must take $i=u-k$, and we need a $j$ such that $m+(j+m)\theta_i^{-1}\theta_u = v$, that is, $j = (v-m)\theta_u^{-1}\theta_i - m$.

However, the left translations are not necessarily bijections. We claim that all left translations are bijections if and only if $\theta_i^{-1}\theta_j$ is a complete mapping for every $i$, $j\in \Z_q$. Indeed, given $(i,j)$, $(u,v)$, we need to find $(k,\ell)$ such that $(i,j)(k,\ell) = (i+k,m+(j+m)\theta_i^{-1}\theta_{i+k}) = (u,v)$, where $m+m\theta_k=\ell$. We must take $k=u-i$, and we want $\ell$ such that $-j+(m+j)+(m+j)\theta_i^{-1}\theta_u = v$, where $m+m\theta_k=\ell$. Now, as $\ell$ ranges over $\Z_q$, so does $m$, so we need an $m$ such that $(m+j)+(m+j)\theta_i^{-1}\theta_u = v+j$. We will be always able to find such an $m$ if and only if $\theta_i^{-1}\theta_u$ is a complete mapping.

\medskip

Therefore, for odd primes $p>q$, let $\Theta=\setof{\theta_i}{i\in \Z_q}$ be a collection of complete mappings such that $\theta_0=1$, $0\theta_i=0$ for every $i\in \Z_q$, and $\theta_i^{-1}\theta_j$ is a complete mapping for every $i$, $j\in \Z_q$. Then, and only then, will we define $\mathcal Q(\Theta)$ on $\Z_q\times \Z_p$ by
\begin{equation}\label{Eq:MultAbstract}
    (i,j)(k,\ell) = (i+k,\ m+(j+m)\theta_i^{-1}\theta_{i+k}),
\end{equation}
where $m+m\theta_k=\ell$. We have proved above that $\mathcal Q(\Theta)$ is a loop.

We would like to know when $\mathcal Q(\Theta)$ is a right Bol loop. This problem was resolved by Niederreiter and Robinson when $\Theta$ consists of linear complete mappings. We can restate their results as follows:

\begin{theorem}[{{compare \cite[Theorems 6, 8 and 11]{NiederreiterRobinson_pq}}}]\label{Th:NiRoBig}
Let $p>q$ be odd primes. Let $\mathcal Q(\Theta)$ be the loop defined by \eqref{Eq:MultAbstract}, and suppose that $\theta_i$ is linear for every $i\in \Z_q$. Then $\mathcal Q(\Theta)$ is a right Bol loop if and only if there exists a bi-infinite sequence $\{u_i\}$ with $u_i\in\F_p$ solving the recurrence relation
\begin{equation}\label{Eq:Recurrence}
    u_{n+2} = \lambda u_{n+1} - u_n,
\end{equation}
for some $\lambda\in\F_p^*$, and we have $u_0=1$, $u_i^{-1}u_j\in\F_p\setminus\{0,-1\}$ for every $i$, $j$, and $\theta_i = u_k^{-1}$ whenever $i\equiv k\pmod q$.

Suppose that two right Bol loops correspond to the bi-infinite sequences $\{u_i\}$ and $\{v_i\}$, respectively. Then the loops are isomorphic if and only if there is $0\ne s\in \Z_q$ such that $u_i = v_{si}$ for every $i\in \Z_q$, and the loops are isotopic if and only if there are $0\ne s\in \Z_q$ and $r\in\Z_q$ such that $u_i=v_r^{-1}v_{si+r}$ for every $i\in\Z_q$.

Finally, the obtained right Bol loop is a right Bruck loop if and only if $u_i=u_{-i}$ for all $i\in \Z_q$.
\end{theorem}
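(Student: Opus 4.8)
The plan is to work throughout in the linear case, writing each $\theta_i$ as multiplication by a scalar $\lambda_i\in\F_p\setminus\{0,-1\}$ with $\lambda_0=1$, and setting $u_i=\lambda_i^{-1}$ so that $u_0=1$ and $\theta_i=u_i^{-1}$. Since $\theta_i^{-1}\theta_j$ is multiplication by $\lambda_i^{-1}\lambda_j=u_iu_j^{-1}$, the loop conditions built into \eqref{Eq:MultAbstract} translate exactly into $u_i^{-1}u_j\notin\{0,-1\}$ for all $i,j$ (taking $j=0$ or $i=0$ recovers $u_k\notin\{0,-1\}$, i.e.\ that each $\theta_k$ is a complete mapping). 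First I would rewrite the right translations of $\mathcal Q(\Theta)$ as fibrewise affine maps: $R_{(k,\ell)}$ carries the fibre over $i\in\Z_q$ to the fibre over $i+k$, acting on the second coordinate by $j\mapsto \lambda_i^{-1}\lambda_{i+k}\,j+g(i,k,\ell)$, where $g(i,k,\ell)=\tfrac{\ell}{1+\lambda_k}\bigl(1+\lambda_i^{-1}\lambda_{i+k}\bigr)$ is $\F_p$-linear in $\ell$.

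The key observation is that in $R_uR_vR_u=R_{(uv)u}$ the multiplicative parts telescope: composing the three fibrewise slopes over the fibre $i$ gives $\lambda_i^{-1}\lambda_{i+k}\cdot\lambda_{i+k}^{-1}\lambda_{i+k+k'}\cdot\lambda_{i+k+k'}^{-1}\lambda_{i+2k+k'}=\lambda_i^{-1}\lambda_{i+2k+k'}$, which is precisely the slope of $R_{(uv)u}$ over that fibre. Hence \eqref{Eq:RBol} holds if and only if the additive parts agree. Writing $u=(k,\ell)$, $v=(k',\ell')$ and comparing, on both sides, the coefficients of the two free parameters $\ell$ and $\ell'$, I would reduce the Bol identity (after clearing denominators and using $u_0=1$) to the single family of quadratic relations
\begin{equation*}
(u_{2k+k'}+u_0)(u_{i+k+k'}+u_{i+k}) = (u_{i+2k+k'}+u_i)(u_{k+k'}+u_k),\qquad i,k,k'\in\Z_q. \tag{$\star$}
\end{equation*}
A short computation shows that the $\ell$- and $\ell'$-coefficient conditions differ only by a relation that is automatically valid, so $(\star)$ carries the entire content of \eqref{Eq:RBol}.

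It then remains to identify $(\star)$ with the recurrence \eqref{Eq:Recurrence}. Setting $k'=0$, $k=1$ in $(\star)$ and dividing by $2u_1\neq0$ yields $u_{i+2}=\lambda u_{i+1}-u_i$ with $\lambda=(u_2+u_0)/u_1\in\F_p^*$ (nonzero because $u_2\neq-1$), which proves the forward direction and pins down $\lambda$; the $\Z_q$-periodicity of $\{u_i\}$ forces the characteristic roots of $x^2-\lambda x+1$ to be $q$th roots of unity, which is the link to the set $\Gamma$ in Theorem~\ref{Th:Main}(iii). For the converse I would pass to $\F_{p^2}$, write the general solution as $u_n=A\omega^n+B\omega^{-n}$ with $\omega+\omega^{-1}=\lambda$ and $A+B=u_0=1$, and substitute into $(\star)$: the $A^2$ and $B^2$ contributions reduce to the common value $u_{i+k}(\omega^{k'}+1)(\omega^{2k+k'}+1)$, while the $AB$ cross terms match after the short identity $(\omega^{2k+k'}+1)(\omega^{-k'}+1)=\omega^{2k}(\omega^{-(2k+k')}+1)(\omega^{k'}+1)$. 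The repeated-root cases $\lambda=\pm2$ are checked directly. This shows $\mathcal Q(\Theta)$ is right Bol exactly when such a recurrence exists.

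For the classification clauses I would use the structural facts already established. Since $\langle b\rangle$ is the left nucleus and the unique subloop of order $p$, any isomorphism $\mathcal Q(\{u_i\})\to\mathcal Q(\{v_i\})$ preserves it and induces an automorphism $i\mapsto si$ ($s\in\Z_q\setminus\{0\}$) of the quotient $\Z_q$; transporting this through the slopes $\lambda_i$ gives $u_i=v_{si}$, and each such $s$ conversely yields an isomorphism. For isotopy I would invoke Lemma~\ref{Lm:OrderLu} to reduce to the isotopes $(Q,\circ_c)$ of \eqref{Eq:Isotope}; the choice of $c$ contributes one additional index shift $r$, producing $u_i=v_r^{-1}v_{si+r}$. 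Finally, a right Bol loop satisfies \eqref{Eq:AIP} if and only if $J\in\aut{\mathcal Q(\Theta)}$; computing that $J$ has the form $(i,j)\mapsto(-i,\mu_i j)$ and imposing that it respects \eqref{Eq:MultAbstract} reduces directly to $u_i=u_{-i}$ for all $i$ (equivalently $A=B=\tfrac12$), which is the Bruck criterion. The main obstacle is the middle step: the bookkeeping required to extract and simplify $(\star)$ from the additive parts, together with the cross-term identity in the converse direction; the telescoping of slopes and the isomorphism/isotopy clauses are then routine transport-of-structure arguments (compare \cite{NiederreiterRobinson_pq}).
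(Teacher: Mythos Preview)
Your proposal is correct as a sketch, but it takes a substantially different route from the paper. The paper does not reprove the Niederreiter--Robinson results: its proof of Theorem~\ref{Th:NiRoBig} consists only of verifying that the abstract loop $\mathcal Q(\Theta)$ satisfies the structural hypotheses of \cite[Theorem~6]{NiederreiterRobinson_pq} (namely $(i,0)(k,0)=(i+k,0)$, $(0,j)(0,\ell)=(0,j+\ell)$, $(i,0)(0,\ell)=(i,\ell)$, $(0,j)(i,0)=(i,j\theta_i)$, and normality of the $p$-fibre), and then citing \cite[Theorems~6,\,8,\,11]{NiederreiterRobinson_pq} wholesale for the recurrence characterization, the isomorphism/isotopy criteria, and the Bruck criterion. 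By contrast, you give a self-contained derivation: you exhibit the right translations as fibrewise affine maps, observe that the slopes telescope so that \eqref{Eq:RBol} reduces to a comparison of additive constants, extract the quadratic relation $(\star)$, and then identify $(\star)$ with the linear recurrence by specialization and by the explicit $\F_{p^2}$-parametrization $u_n=A\omega^n+B\omega^{-n}$. Your approach buys independence from \cite{NiederreiterRobinson_pq} and explains \emph{why} a second-order recurrence appears (the telescoping leaves a two-step comparison); the paper's approach buys brevity.

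Two places in your sketch deserve a line of justification if you write this out in full. First, in the isomorphism clause you should record that $T_{(k,\ell)}|_{\langle b\rangle}$ is multiplication by $\lambda_k$ \emph{independently of $\ell$} (a one-line check from \eqref{Eq:MultAbstract}); this is what guarantees that an isomorphism, which may send $a$ to an arbitrary order-$q$ element $(s,r)$ rather than to $(s,0)$, still yields exactly $u_i=v_{si}$ with no extra parameter. Second, your Bruck argument is fine, but note that ``$J\in\aut{\mathcal Q(\Theta)}$'' is literally equivalent to \eqref{Eq:AIP} only because right Bol loops have two-sided inverses; it is worth saying this explicitly.
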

\begin{proof}
Theorem 6 of \cite{NiederreiterRobinson_pq} applies as long as we verify the following properties in our abstract loop $\mathcal Q(\Theta)$: $(i,0)(k,0)=(i+k,0)$, $(0,j)(0,\ell)=(0,j+\ell)$, $(i,0)(0,\ell) = (i,\ell)$, $(0,j)(i,0) = (i,j\theta_i)$, and $\setof{(0,j)}{j\in \Z_p}$ is a normal subloop of $\mathcal Q(\Theta)$. These conditions are routinely verified from \eqref{Eq:MultAbstract}, where we note that $\setof{(0,j)}{j\in \Z_p}$ is the kernel of the homomorphism $(i,j)\mapsto i$.

(Our assumption on $\Theta$ that $\theta_i^{-1}\theta_j$ is complete for every $i$, $j$ is somewhat suppressed in \cite[Theorem 6]{NiederreiterRobinson_pq}, perhaps being implicit in the fact that their coefficient $R(j,\ell) = (\theta_\ell+1)^{-1}(\theta_{j+\ell}\theta_j^{-1}+1)$ must be invertible. In any case, we have seen that this assumption is necessary in order to obtain a loop, so we must also enforce its counterpart $u_i^{-1}u_j\not\in\{0,-1\}$ for the solutions of the recurrence relation.)

The isomorphism and isotopism problems for bi-infinite sequence are solved in \cite[Theorem 11]{NiederreiterRobinson_pq}. The claim about right Bruck loops is \cite[Theorem 8]{NiederreiterRobinson_pq}.
\end{proof}

Theorem \ref{Th:NiRoBig} in combination with Theorem \ref{Th:Linear} allows us to attack the classification of right Bol loops of order $pq$.

Given $u =\{u_i\}$, $v=\{v_i\}$, write
\begin{equation}\label{Eq:Equivalence}
    u\sim v
\end{equation}
if and only if there is $0\ne s\in \Z_q$ such that $u_i = v_{si}$ for every $i$.

\subsection{The eigenproblem for a circulant matrix}

We will solve the recurrence relation \eqref{Eq:Recurrence} in several steps. First, since we demand $\theta_i = u_k^{-1}$ whenever $i\equiv k\pmod q$, the solution $\{u_i\}$ must be periodic with period $q$. Moreover, the constant $0\ne \lambda\in \F_p$ is part of the problem.

Let $A=(a_{i,j})$ be an $n\times n$ matrix with rows and columns indexed by elements of $\Z_n$. Then $A$ is a \emph{circulant matrix} if $a_{(i+1)\;\textrm{mod}\;q,\ (j+1)\;\textrm{mod}\;q} = a_{i,j}$ for every $i$, $j\in\Z_n$. It is clear that a circulant matrix is determined by its first row.

Let $P$ be the permutation matrix corresponding to the $q$-cycle $(1,2,\dots,q)$, and let $A = P+P^{-1}$. Then $A$ is a $q\times q$ circulant matrix with first row equal to $(0,1,0,\dots,0,1)$. Once we rewrite the recurrence \eqref{Eq:Recurrence} as $u_{n+2}-\lambda u_{n+1}+u_n=0$, we see that solving \eqref{Eq:Recurrence} with period $q$ for some constant $\lambda$ is equivalent to solving the eigenvalue problem $Au=\lambda u$.

We will follow the standard approach to real circulant matrices \cite{Davis}. Since we will need to work in $\F_{p^2}$, let us first recall some basic facts about quadratic extensions.

\medskip

Let $p$ be an odd prime, and let $t$ be any element of $\F_p$ that is not a square modulo $p$. Then $\F_{p^2}$ can be represented as $\setof{u+v\sqrt{t}}{u,\,v\in \F_p}$ with addition $(u_1+v_1\sqrt{t})+(u_2+v_2\sqrt{t}) = (u_1+u_2) + (v_1+v_2)\sqrt{t}$ and multiplication $(u_1+v_1\sqrt{t})(u_2+v_2\sqrt{t}) = (u_1u_2+v_1v_2t) + (u_1v_2+v_1u_2)\sqrt{t}$.

Let $\alpha=u+v\sqrt{t}\in \F_{p^2}$. The conjugate of $\alpha$ is the element $\alpha^* = u-v\sqrt{t}$. The norm of $\alpha$ is given by $N(\alpha) = \alpha\alpha^* = u^2-v^2t$, giving rise to a multiplicative map $N:\F_{p^2}\to \F_p$. The trace of $\alpha$ is given by $\mathrm{tr}(\alpha) = \alpha + \alpha^* = 2u$, giving rise to an additive map $\mathrm{tr}:\F_{p^2}\to \F_p$. The characteristic polynomial of $\alpha$ is the polynomial $x^2 - \mathrm{tr}(\alpha)x + N(\alpha)$ over $\F_p$, which has both $\alpha$ and $\alpha^*$ as roots.

\medskip

\begin{lemma}\label{Lm:RootSums}
Let $p>q$ be odd primes such that $q$ divides $p^2-1$, and let $\omega$ be a primitive $q$th root of unity in $\F_{p^2}$. Then $\omega + \omega^{-1}\in \F_p$.
\end{lemma}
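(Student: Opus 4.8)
The plan is to use the standard characterization of $\F_p$ inside $\F_{p^2}$ as the set of fixed points of the Frobenius automorphism $x\mapsto x^p$. Concretely, for $\alpha = u+v\sqrt{t}\in\F_{p^2}$ one checks (via $(\sqrt{t})^p = t^{(p-1)/2}\sqrt{t} = -\sqrt{t}$, using Euler's criterion and the fact that $t$ is a non-square) that $\alpha^p = \alpha^*$, so that the conjugation $*$ introduced above \emph{is} the nontrivial Galois automorphism of $\F_{p^2}/\F_p$. Hence $\alpha\in\F_p$ if and only if $\alpha^p=\alpha$. Applying this to $\alpha = \omega+\omega^{-1}$ and using that Frobenius is a field homomorphism, it suffices to show that $(\omega+\omega^{-1})^p = \omega^p+\omega^{-p}$ equals $\omega+\omega^{-1}$; that is, everything reduces to the residue of $p$ modulo the order $q$ of $\omega$.

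The key arithmetic observation is that $q$, being an odd prime dividing $p^2-1=(p-1)(p+1)$, divides exactly one of $p-1$ and $p+1$, so $p\equiv\pm 1\pmod q$. First I would handle $q\mid p-1$: then $p\equiv 1\pmod q$, so $\omega^p=\omega$ and $\omega^{-p}=\omega^{-1}$, and the sum is visibly fixed by Frobenius. (In this case $\omega$ itself lies in $\F_p$, since it has order $q$ dividing $p-1=|\F_p^*|$, so the conclusion is immediate.) In the remaining case $q\mid p+1$, we have $p\equiv -1\pmod q$, whence $\omega^p=\omega^{-1}$ and $\omega^{-p}=\omega$, giving $\omega^p+\omega^{-p}=\omega^{-1}+\omega=\omega+\omega^{-1}$ again. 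Equivalently, in this case $\omega$ and $\omega^{-1}$ are Galois conjugates over $\F_p$, so $\omega+\omega^{-1}=\mathrm{tr}(\omega)\in\F_p$ by the trace map defined above.

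I do not expect a genuine obstacle here: once the problem is phrased in terms of the Frobenius action, the entire content is the elementary fact that the divisibility hypothesis forces $p\equiv\pm 1\pmod q$. The only points requiring mild care are to verify that the conjugation $*$ coincides with $x\mapsto x^p$ (so that ``fixed by $*$'' really does characterize $\F_p$), and to note that $q$ cannot divide both $p-1$ and $p+1$ since $q$ is odd, so the two cases are exhaustive and the argument is uniform.
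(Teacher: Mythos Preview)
Your proof is correct and follows essentially the same approach as the paper: both split into the cases $q\mid p-1$ (where $\omega\in\F_p$ outright) and $q\mid p+1$, and in the latter case both establish that $\omega^{-1}$ is the Galois conjugate $\omega^*$ of $\omega$, so that $\omega+\omega^{-1}=\mathrm{tr}(\omega)\in\F_p$. The only cosmetic difference is that the paper reaches $\omega^*=\omega^{-1}$ via the norm (arguing $N(\omega)^q=1$ in $\F_p^*$ with $\gcd(q,p-1)=1$ forces $N(\omega)=\omega\omega^*=1$), whereas you obtain it directly from $p\equiv -1\pmod q$ after identifying $*$ with the Frobenius $x\mapsto x^p$; your route is arguably a touch more transparent, but the content is the same.
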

\begin{proof}
There is nothing to prove when $q$ divides $p-1$ since then $\omega\in \F_p$. Suppose that $q$ does not divide $p-1$. Since $\omega$ is a (primitive) $q$th root of unity in $\F_{p^2}$, we have $1=N(1)=N(\omega^q)=N(\omega)^q$. Since $q$ does not divide $p-1$, it follows that $N(\omega)=1$, and with $\omega = x+y\sqrt{t}$ we get $\omega^{-1} = x - y\sqrt{t}$ and $\omega + \omega^{-1} = 2x\in \F_p$.
\end{proof}

\begin{lemma}\label{Lm:Eigenvalues}
Let $p>q$ be odd primes such that $q$ divides $p^2-1$. Let $A$ be the $q\times q$ circulant matrix with first row equal to $(0,1,0,\dots,0,1)$. Let $\omega$ be a primitive $q$th root of unity in $\F_{p^2}$. For $0\le i<q$, let $\lambda_i = \omega^i+\omega^{-i}$ and $v_i = (1,\omega^i,\omega^{2i},\dots,\omega^{(q-1)i})^T$.
\begin{enumerate}
\item[(i)] For every $0\le i<q$, $\lambda_i$ is an element of the prime field of $\F_{p^2}$.
\item[(ii)] For every $0\le i<q$, $\lambda_i$ is an eigenvalue of $A$ over $\F_{p^2}$ with eigenvector $v_i$.
\item[(iii)] For $0\le i<j<q$, $\lambda_i=\lambda_j$ if and only if $i+j\equiv 0\pmod q$. In particular, $\lambda_0$ has multiplicity $1$, and every $\lambda_i$ with $1\le i\le (q-1)/2$ has multiplicity $2$.
\item[(iv)] For $0< i\le (q-1)/2$, the eigenvectors $v_i$, $v_{q-i}$ are linearly independent.
\end{enumerate}
\end{lemma}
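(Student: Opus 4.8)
The plan is to treat $A=P+P^{-1}$ exactly as one treats a real circulant matrix, where the $i$th eigenvalue is $\zeta^i+\zeta^{-i}$ for $\zeta$ a root of unity; here $\zeta=\omega$ lives in $\F_{p^2}$ rather than in $\mathbb{C}$, but the formal manipulations are identical. For part (i) I would observe that $\lambda_0=2\in\F_p$, while for $1\le i\le q-1$ the power $\omega^i$ is again a primitive $q$th root of unity (since $q$ is prime), so Lemma \ref{Lm:RootSums} applied to $\omega^i$ gives $\lambda_i=\omega^i+(\omega^i)^{-1}\in\F_p$. As $\F_p$ is the prime field of $\F_{p^2}$, this is exactly (i).

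For part (ii) I would verify the eigenvalue equation coordinatewise. Reading off the circulant pattern, the $(j,k)$ entry of $A$ equals $1$ precisely when $k\equiv j\pm 1\pmod q$ and is $0$ otherwise. Hence the $j$th coordinate of $Av_i$ is $(v_i)_{j+1}+(v_i)_{j-1}=\omega^{i(j+1)}+\omega^{i(j-1)}=\omega^{ij}(\omega^i+\omega^{-i})=\lambda_i(v_i)_j$, so $Av_i=\lambda_iv_i$. Since the first coordinate of $v_i$ is $1$, the vector $v_i$ is nonzero and $\lambda_i$ is genuinely an eigenvalue of $A$ over $\F_{p^2}$.

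For part (iii), setting $x=\omega^i$ and $y=\omega^j$, the equality $\lambda_i=\lambda_j$ reads $x+x^{-1}=y+y^{-1}$; clearing denominators and factoring yields $(x-y)(xy-1)=0$. For $0\le i<j<q$ the possibility $x=y$ is excluded, so $xy=1$, i.e. $\omega^{i+j}=1$, which is equivalent to $i+j\equiv 0\pmod q$; the converse is immediate. The multiplicity statement then follows by bookkeeping: $i=0$ pairs only with $j=0$, so $\lambda_0$ is simple, whereas for $1\le i\le (q-1)/2$ the index $i$ pairs exactly with $q-i$ (distinct from $i$ since $q$ is odd), giving multiplicity $2$; moreover the values $\lambda_i$ across $1\le i\le (q-1)/2$ are pairwise distinct because $2\le i+i'\le q-1$ is never $\equiv 0\pmod q$.

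For part (iv), I would suppose $\alpha v_i+\beta v_{q-i}=0$ and compare the first two coordinates, obtaining $\alpha+\beta=0$ and $\alpha\omega^i+\beta\omega^{-i}=0$, whence $\alpha(\omega^i-\omega^{-i})=0$; since $0<i\le(q-1)/2$ forces $\omega^{2i}\ne 1$, we have $\omega^i\ne\omega^{-i}$ and thus $\alpha=\beta=0$. (Equivalently, $v_i$ and $v_{q-i}$ are two columns of the Vandermonde matrix on the distinct nodes $1,\omega,\dots,\omega^{q-1}$.) None of the four parts presents a serious obstacle; the only point requiring genuine care is part (i), namely confirming that each $\lambda_i$ lands in $\F_p$ rather than merely in $\F_{p^2}$, which is exactly where Lemma \ref{Lm:RootSums} is needed.
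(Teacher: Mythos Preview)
Your proposal is correct and follows essentially the same approach as the paper. The only cosmetic difference is in part~(iii): you factor $(x-y)(xy-1)=0$ directly, whereas the paper observes that $\omega^i$, $\omega^{-i}$, $\omega^j$ are all roots of the quadratic $x^2-\lambda x+1$, forcing $\omega^j\in\{\omega^i,\omega^{-i}\}$; these are the same argument in different clothing, and parts~(i), (ii), (iv) match the paper almost verbatim.
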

\begin{proof}
Part (i) follows from Lemma \ref{Lm:RootSums}. For (ii), let us index the rows and columns of $A$ by elements of $\Z_q$, and let $e_j$ be the row vector whose only nonzero entry is $1$ in position $j$. The $j$th row of $A-\lambda_iI$ is equal to $e_{j-1} - \lambda_i e_j + e_{j+1}$, where the subscripts of $e$ are taken modulo $q$. We have $(e_{j-1}-\lambda_ie_j + e_{j+1})\cdot v_i^T = w^{(j-1)i} -(w^i+w^{-i})w^{ji} + w^{(j+1)i} = 0$.

(iii) We clearly have $\lambda_i=\lambda_{q-i}$. Suppose that there are $0\le i<j\le q-1$ such that $\lambda_i=\lambda_j=\lambda$. Then $\omega^i$, $\omega^{-i}$ and $\omega^j$ are roots of the characteristic polynomial $x^2-\lambda x + 1$. Therefore $j\in\{i,\,-i\}$.

(iv) The vectors $v_i$, $v_{q-i}$ have the same first coordinates but different second coordinates, so they are linearly independent.
\end{proof}

\subsection{Identifying valid solutions}

Suppose that $q$ divides $p^2-1$. We have now solved the recurrence relation \eqref{Eq:Recurrence} in $\F_{p^2}$ subject only to the restriction that it be periodic with period $q$. While the eigenvalues happen to lie in $\F_p$, the eigenvectors need not have all components in $\F_p$ when $q$ divides $p+1$.

Let us leave the solution for $\lambda_0=2$ aside for now. By Lemma \ref{Lm:Eigenvalues}, the general solution for the eigenvalue $\lambda_i$ with $i>0$ is of the form $u = \gamma v_i + \delta v_{-i}$, where $\gamma$, $\delta\in\F_{p^2}$.

Following Theorem \ref{Th:NiRoBig}, we are only interested in solutions $u$ such that $u_0=1$ and $u_i^{-1}u_j\in\F_p\setminus\{0,-1\}$. Finally, we need to understand the solutions modulo the equivalence \eqref{Eq:Equivalence}. We will deal with all these requirements at the same time.

The final solutions will eventually be described by a certain set $\Gamma$, a subset of
\[
    \Gamma''=\left\{\begin{array}{ll}
        \F_p,&\text{ if $q$ divides $p-1$},\\
        1/2 + \F_p\sqrt{t},&\text{ if $q$ divides $p+1$}.
    \end{array}\right.
\]

\begin{lemma}\label{Lm:ConvexRootSums}
Let $p>q$ be odd primes such that $q$ divides $p^2-1$, and let $\omega$ be a primitive $q$th root of unity in $\F_{p^2}$. Let $\gamma\in \F_{p^2}$. Then $\gamma\omega + (1-\gamma)\omega^{-1}\in \F_p$ if and only if $\gamma\in\Gamma''$.
\end{lemma}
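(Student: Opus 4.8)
The plan is to exploit the standard criterion recorded just before the statement: an element $\beta\in\F_{p^2}$ lies in the prime field $\F_p$ if and only if it is fixed by the conjugation map, i.e.\ $\beta^*=\beta$. Writing $\beta = \gamma\omega + (1-\gamma)\omega^{-1}$, I would compute $\beta^*$ using that $*$ is a field automorphism of $\F_{p^2}$ with $1^*=1$, so that $\beta^* = \gamma^*\omega^* + (1-\gamma^*)(\omega^{-1})^*$. Everything then reduces to understanding how $\omega$ transforms under $*$, and this is exactly where the two cases of $\Gamma''$ come from.

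When $q$ divides $p-1$, a primitive $q$th root of unity lies in $\F_p^*$, so $\omega\in\F_p$ and hence $\omega^*=\omega$, $(\omega^{-1})^*=\omega^{-1}$; a direct calculation then collapses to
\[
    \beta^*-\beta = (\gamma^*-\gamma)(\omega-\omega^{-1}).
\]
When $q$ divides $p+1$, from $\omega^q=1$ and $q\mid p+1$ we get $\omega^{p+1}=1$, hence $\omega^*=\omega^p=\omega^{-1}$ and $(\omega^{-1})^*=\omega$; now the analogous calculation gives
\[
    \beta^*-\beta = (1-\gamma-\gamma^*)(\omega-\omega^{-1}).
\]
In both cases the second factor $\omega-\omega^{-1}$ is nonzero: since $q$ is odd and $\omega$ is a primitive $q$th root of unity, $\omega^2=1$ would force $q\mid 2$, which is impossible, so $\omega\ne\omega^{-1}$.

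With the nonvanishing of $\omega-\omega^{-1}$ in hand, $\beta\in\F_p$ becomes equivalent to the vanishing of the first factor, and this is an \emph{if and only if}. In the case $q\mid p-1$ the condition is $\gamma^*=\gamma$, i.e.\ $\gamma\in\F_p=\Gamma''$. In the case $q\mid p+1$ the condition is $\gamma+\gamma^*=1$, that is, $\mathrm{tr}(\gamma)=1$; writing $\gamma=u+v\sqrt{t}$ and recalling $\mathrm{tr}(\gamma)=2u$, this says $u=1/2$, i.e.\ $\gamma\in 1/2+\F_p\sqrt{t}=\Gamma''$. This matches the two branches of the definition of $\Gamma''$ exactly.

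There is no genuine obstacle here; the only points requiring care are the bookkeeping in the two conjugation computations (collecting the coefficients of $\omega$ and $\omega^{-1}$ so that the common factor $\omega-\omega^{-1}$ appears) and the remark $\omega\ne\omega^{-1}$, which is precisely what upgrades a one-sided implication into the desired equivalence.
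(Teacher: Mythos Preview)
Your proof is correct and follows essentially the same approach as the paper: both arguments split into the cases $q\mid p-1$ and $q\mid p+1$, reduce membership in $\F_p$ to the vanishing of a quantity involving $\omega-\omega^{-1}$, and use $\omega\ne\omega^{-1}$ to conclude. The only cosmetic difference is that you phrase the criterion via the conjugation automorphism $\beta^*=\beta$ and obtain the clean factorizations $(\gamma^*-\gamma)(\omega-\omega^{-1})$ and $(1-\gamma-\gamma^*)(\omega-\omega^{-1})$, whereas the paper writes $\gamma=u+v\sqrt{t}$ and $\omega=x+y\sqrt{t}$ and extracts the $\sqrt{t}$-coefficient directly; these are the same computation in different clothing.
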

\begin{proof}
Let $\omega = x+y\sqrt{t}$ and $\gamma = u + v\sqrt{t}$. Suppose that $q$ divides $p-1$, so $\omega = x\in \F_p$. Then $\gamma\omega + (1-\gamma)\omega^{-1} = (u+v\sqrt{t})x + ((1-u)-v\sqrt{t})x^{-1}$ is of the form $r+s\sqrt{t}$, where $s = vx-vx^{-1}$. We see that $s=0$ if and only if $v=0$ or $x-x^{-1}=0$, but the latter condition cannot occur since $x^2 = \omega^2\ne 1$.

Now suppose that $q$ divides $p+1$, so $\omega = x+y\sqrt{t}$ for some $y\ne 0$. Then $\gamma\omega + (1-\gamma)\omega^{-1} = (u+v\sqrt{t})(x+y\sqrt{t}) + ((1-u)-v\sqrt{t})(x-y\sqrt{t})$ is of the form $r+s\sqrt{t}$, where $s = uy+vx -(1-u)y-vx = (2u-1)y$. Since $y\ne 0$, we conclude that $s=0$ if and only if $2u-1\equiv 0\pmod p$.
\end{proof}

\begin{lemma}\label{Lm:Equivalence}
Let $0<j$, $k\le q-1$. Then for every $\gamma$, $\delta\in \F_{p^2}$ we have $\gamma v_j + \delta v_{-j} \sim \gamma v_k + \delta v_{-k}$.
\end{lemma}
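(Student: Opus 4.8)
The plan is to write out the entries of both sequences explicitly and then exhibit the scaling index $s$ by hand. Since the $n$th entry of $v_i$ is $\omega^{in}$, the $n$th entry of $\gamma v_j + \delta v_{-j}$ is $\gamma\omega^{jn} + \delta\omega^{-jn}$, and likewise the $n$th entry of $\gamma v_k + \delta v_{-k}$ is $\gamma\omega^{kn} + \delta\omega^{-kn}$. Because $\omega^q=1$, every exponent may be reduced modulo $q$, so these are genuinely sequences indexed by $\Z_q$ and the equivalence \eqref{Eq:Equivalence} applies to them. Note that the \emph{same} coefficients $\gamma$, $\delta$ occur on both sides, which is what makes a clean equivalence possible.

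Next, recalling that $u\sim v$ means $u_n = v_{sn}$ for all $n$ and some $0\ne s\in\Z_q$, I would search for an $s$ making the $sn$th entry of $\gamma v_k + \delta v_{-k}$, namely $\gamma\omega^{ksn} + \delta\omega^{-ksn}$, equal to $\gamma\omega^{jn} + \delta\omega^{-jn}$ for every $n$. This holds as soon as $ks\equiv j\pmod q$, since then $\omega^{ksn}=\omega^{jn}$ and $\omega^{-ksn}=\omega^{-jn}$ term by term, and hence the two sequences agree after the reindexing $n\mapsto sn$.

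Finally, since $q$ is prime and $0<k\le q-1$ forces $k\not\equiv 0\pmod q$, the residue $k$ is invertible in $\Z_q$, so I set $s = j k^{-1}\bmod q$; this gives $ks\equiv j\pmod q$ as required. The hypothesis $0<j\le q-1$ gives $j\not\equiv 0$, so $s\not\equiv 0$ and $s$ is an admissible scaling factor for \eqref{Eq:Equivalence}. There is no substantive obstacle here: the only points needing a moment's care are that primality of $q$ supplies the inverse $k^{-1}$, and that the nonvanishing hypotheses $j,k\ne 0$ are exactly what guarantee the chosen $s$ is nonzero.
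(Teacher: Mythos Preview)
Your proof is correct and follows essentially the same approach as the paper: both set $s = jk^{-1}\pmod q$ and verify directly that $(\gamma v_j+\delta v_{-j})_n = (\gamma v_k+\delta v_{-k})_{sn}$ by matching powers of $\omega$. Your observation that $j,k\ne 0$ are exactly what guarantee $s\ne 0$ is the only point needing care, and you handle it correctly.
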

\begin{proof}
Let $s = s(j,k) = jk^{-1}\pmod q$, and note that $s(j,k) = s(-j,-k)$. Then for every $0\le i\le q-1$ we have $v_{j,i} = \omega^{ij} = \omega^{ijk^{-1}k} = \omega^{(is)k} = v_{k,si}$ and, similarly $v_{-j,i} = v_{-k,si}$. Therefore $(\gamma v_j+\delta v_{-j})_i = \gamma v_{j,i} + \delta v_{-j,i} = \gamma v_{k,si} + \delta v_{-k,si} = (\gamma v_k+\delta v_{-k})_{si}$.
\end{proof}

\begin{lemma}
Let $0<j$, $k\le q-1$. Let $u_j = \{u_{j,i}\}$ be a solution to $Au_j = \lambda_ju_j$ such that $u_{j,i}\in \F_p\setminus\{0,-1\}$ and $u_{j,0}=1$. Then there is a solution $u_k = \{u_{k,i}\}$ to $Au_k=\lambda_ku_k$ such that $u_{k,i}\in \F_p\setminus\{0,-1\}$, $u_{k,0}=1$ and $u_j\sim u_k$.
\end{lemma}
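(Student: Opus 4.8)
The plan is to use the spectral description of $A$: a solution of $Au_j=\lambda_ju_j$ is simply a vector in the $\lambda_j$-eigenspace, and Lemma \ref{Lm:Equivalence} provides a canonical way to transport it into the $\lambda_k$-eigenspace while preserving the equivalence class \eqref{Eq:Equivalence}. The entire statement then follows almost mechanically, with a single point deserving care.

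First I would pin down the eigenspace. Since $q$ is odd and $0<j<q$, we have $j\not\equiv -j\pmod q$, so $v_j$ and $v_{-j}=v_{q-j}$ are eigenvectors for the same eigenvalue $\lambda_j=\lambda_{q-j}$; by Lemma \ref{Lm:Eigenvalues}(iii),(iv) this eigenvalue has multiplicity $2$ and the pair $\{v_j,v_{-j}\}$ is linearly independent, hence a basis of the $\lambda_j$-eigenspace over $\F_{p^2}$. Therefore the given solution can be written uniquely as $u_j=\gamma v_j+\delta v_{-j}$ for some $\gamma,\delta\in\F_{p^2}$.

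Next I would define the candidate $u_k=\gamma v_k+\delta v_{-k}$ using the very same coefficients. By Lemma \ref{Lm:Eigenvalues}(ii) both $v_k$ and $v_{-k}$ are eigenvectors for $\lambda_k$, so $Au_k=\lambda_k u_k$. Lemma \ref{Lm:Equivalence} applies verbatim to this pair and yields $u_j\sim u_k$: with $s=jk^{-1}\pmod q$ one has $u_{j,i}=u_{k,si}$ for every $i\in\Z_q$.

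The remaining step is to verify that $u_k$ has the demanded properties, and this is the only place where one must be alert. A priori the entries of $u_k$ lie in $\F_{p^2}$, not in $\F_p$, so it is not obvious that $u_{k,\ell}\in\F_p\setminus\{0,-1\}$. The rescue is that $s$ is a unit modulo $q$, so $i\mapsto si$ is a bijection of $\Z_q$; hence the relation $u_{j,i}=u_{k,si}$ exhibits the entries of $u_k$ as a mere reindexing of the entries of $u_j$. Consequently every entry of $u_k$ equals some entry of $u_j$ and thus lies in $\F_p\setminus\{0,-1\}$, and taking $i=0$ (so that $si=0$) gives $u_{k,0}=u_{j,0}=1$. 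All of the real content is carried by Lemma \ref{Lm:Equivalence}; the sole subtlety is the descent of the entries from $\F_{p^2}$ to $\F_p$, which is handled precisely by the bijectivity of $i\mapsto si$.
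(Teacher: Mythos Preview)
Your proof is correct and follows the same overall line as the paper: write $u_j=\gamma v_j+\delta v_{-j}$, transport to $u_k=\gamma v_k+\delta v_{-k}$, and invoke Lemma~\ref{Lm:Equivalence}. The one difference is that the paper first observes $\delta=1-\gamma$ and appeals to Lemma~\ref{Lm:ConvexRootSums} to force $u_{k,i}\in\F_p$, then uses the permutation argument only for the exclusion of $0$ and $-1$; you instead use the bijectivity of $i\mapsto si$ to handle membership in $\F_p$, the exclusion of $\{0,-1\}$, and $u_{k,0}=1$ all at once. Your route is slightly cleaner for this lemma in isolation; the paper's detour through $\delta=1-\gamma$ and $\Gamma''$ is there because that normal form is needed immediately afterwards to parametrize the solutions.
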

\begin{proof}
Since $Au_j=\lambda_ju_j$, we have $u_j = \gamma v_j + \delta v_{-j}$ for some $\gamma$, $\delta\in \F_{p^2}$. We have $u_{j,0}=1$ by assumption and $v_{j,0} = v_{-j,0}=1$ always, which forces $\delta = 1-\gamma$. Lemma \ref{Lm:ConvexRootSums} therefore applies.

Let $u_k=\gamma v_k+(1-\gamma)v_{-k}$. Obviously, $u_k$ solves $Au_k = \lambda_ku_k$. Moreover, Lemma \ref{Lm:ConvexRootSums} implies that $u_{k,i}\in \F_p$ for every $i$ and $u_{k,0} = u_{j,0}=1$. By Lemma \ref{Lm:Equivalence}, $u_j\sim u_k$. Since the coordinates of $u_k$ are merely the permuted coordinates of $u_j$, we also get $u_{k,i}\not\in\{0,-1\}$.
\end{proof}

We are therefore interested in the solutions
\[
    u(\gamma) = \gamma v_1 + (1-\gamma)v_{-1}
\]
to $Au = \lambda_1 u$, where $\gamma\in\Gamma''$.

\begin{lemma}\label{Lm:Silly}
Let $F$ be a field and $x$, $y\in F$ such that $y^2\ne -1$. Then $xy+(1-x)y^{-1}=0$ if and only if $xy^2+(1-x)y^{-2}=-1$.
\end{lemma}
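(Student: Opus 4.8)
The plan is to reduce both implications to a single polynomial identity. Writing $s = xy+(1-x)y^{-1}$ for the quantity that we wish to vanish, I would first record the identity
\[
    xy^2+(1-x)y^{-2}+1 = s\,(y+y^{-1}),
\]
which is checked by expanding the right-hand side: the two cross terms contribute $xy\cdot y^{-1} + (1-x)y^{-1}\cdot y = x+(1-x)=1$, while the remaining terms give exactly $xy^2+(1-x)y^{-2}$. Note that $y\ne 0$ is implicit here, since $y^{-1}$ already appears in the statement.

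With this identity in hand, the equation $xy^2+(1-x)y^{-2}=-1$ is immediately equivalent to $s\,(y+y^{-1})=0$. The forward direction then drops out: if $s=0$, the left-hand side of the identity is $0$, so $xy^2+(1-x)y^{-2}=-1$.

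For the converse I would invoke the hypothesis $y^2\ne -1$. Multiplying $y+y^{-1}$ by $y$ yields $y^2+1$, so $y+y^{-1}=0$ would force $y^2=-1$, which is excluded; hence $y+y^{-1}\ne 0$. Since $F$ is a field, and therefore has no zero divisors, the relation $s\,(y+y^{-1})=0$ forces $s=0$, i.e.\ $xy+(1-x)y^{-1}=0$. This establishes the equivalence in both directions.

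There is no substantive obstacle here; the single point requiring care is the converse implication, where the nondegeneracy assumption $y^2\ne -1$ is precisely what licenses cancelling the factor $y+y^{-1}$.
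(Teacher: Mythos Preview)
Your proof is correct and follows exactly the same route as the paper: the key is the factorization $xy^2 + (1-x)y^{-2} + 1 = (xy + (1-x)y^{-1})(y + y^{-1})$, from which both implications follow immediately once one notes that $y + y^{-1} \ne 0$ under the hypothesis $y^2 \ne -1$. You have merely spelled out in more detail what the paper records in a single line.
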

\begin{proof}
Note that $x y^2 + 1 + (1 - x) y^{-2}   =   ( x y + (1 - x) y^{-1}  )( y + y^{-1} )$.
\end{proof}

Set $\Gamma' = \setof{\gamma\in\Gamma''}{1-\gamma^{-1}\not\in\langle\omega\rangle}$.

\begin{lemma}
The solutions to $Au = \lambda_1u$ such that $u_i\in \F_p\setminus\{0,-1\}$ and $u_0=1$ are precisely the $p-(q-1)$ solutions $\setof{u(\gamma)}{\gamma\in\Gamma'}$.

Moreover, $u(1/2)$ is a solution, and $u(\gamma)$ is a solution if and only if $u(1-\gamma)$ is a solution.
\end{lemma}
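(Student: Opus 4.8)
The plan is to parametrize all solutions with $u_0=1$ by a single element $\gamma\in\F_{p^2}$ and then to impose the three membership conditions (coordinates in $\F_p$, coordinates nonzero, coordinates $\neq -1$) one at a time, the key observation being that the last condition is automatic once the second holds. First I would invoke Lemma \ref{Lm:Eigenvalues}: the $\lambda_1$-eigenspace is two-dimensional with basis $v_1$, $v_{-1}$, so any solution of $Au=\lambda_1 u$ equals $\gamma v_1+\delta v_{-1}$; the normalization $u_0=1$ together with $v_{1,0}=v_{-1,0}=1$ forces $\delta=1-\gamma$. Hence every solution with $u_0=1$ is exactly $u(\gamma)$ for a unique $\gamma\in\F_{p^2}$, with $i$th coordinate $\gamma\omega^i+(1-\gamma)\omega^{-i}$, and the lemma becomes a question about which $\gamma$ keep all coordinates in $\F_p\setminus\{0,-1\}$.

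For $\F_p$-rationality, the coordinates obey the recurrence $u_{i+1}=\lambda_1 u_i-u_{i-1}$ coming from $A=P+P^{-1}$, whose coefficient $\lambda_1=\omega+\omega^{-1}$ lies in $\F_p$ by Lemma \ref{Lm:RootSums}. Since $u_0=1\in\F_p$, running this recurrence forward and backward shows all coordinates lie in $\F_p$ if and only if $u_1=\gamma\omega+(1-\gamma)\omega^{-1}$ does, which by Lemma \ref{Lm:ConvexRootSums} happens precisely when $\gamma\in\Gamma''$. For nonvanishing, solving $\gamma\omega^i+(1-\gamma)\omega^{-i}=0$ gives $\omega^{2i}=1-\gamma^{-1}$; because $q$ is odd the map $i\mapsto 2i$ permutes $\Z_q$, so the squares $\omega^{2i}$ exhaust $\langle\omega\rangle$, and therefore $u(\gamma)$ has a zero coordinate iff $1-\gamma^{-1}\in\langle\omega\rangle$. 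Thus $u(\gamma)$ avoids $0$ exactly when $\gamma\in\Gamma'$ (the value $\gamma=0$, relevant only when $q\mid p-1$, is retained, as $u(0)=v_{-1}$ has no zero coordinate). For the value $-1$, I would apply Lemma \ref{Lm:Silly} with $x=\gamma$ and $y=\omega^i$, noting $y^2=\omega^{2i}\neq-1$ since $-1\notin\langle\omega\rangle$: it yields $u(\gamma)_i=0$ iff $u(\gamma)_{2i}=-1$, and the bijection $i\mapsto 2i$ then shows $u(\gamma)$ meets $-1$ iff it meets $0$. Consequently the $-1$ condition is redundant, and the valid solutions are exactly $\{u(\gamma):\gamma\in\Gamma'\}$.

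For the cardinality I would solve $1-\gamma^{-1}=\omega^k$: for $k=1,\dots,q-1$ this produces the $q-1$ distinct values $\gamma_k=(1-\omega^k)^{-1}$, while $k=0$ forces $\gamma=\infty$. The identity $(1-\eta)^{-1}+(1-\eta^{-1})^{-1}=1$, valid for any $\eta\neq 1$, shows with $\eta=\omega^k$ that $\gamma_k+\gamma_k^*=1$ when $q\mid p+1$ (using $\omega^*=\omega^{-1}$), so $\gamma_k\in\Gamma''$; when $q\mid p-1$ this is trivial since $\omega^k\in\F_p$. Hence exactly $q-1$ elements are deleted from the $p$-element set $\Gamma''$, giving $|\Gamma'|=p-(q-1)$. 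I expect this to be the main technical point: one must confirm that the $q-1$ excluded values genuinely lie inside $\Gamma''$ (rather than merely in $\F_{p^2}$), so that the count subtracts from the correct set.

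Finally, for the ``moreover'' clause, $1/2\in\Gamma''$ in both cases and $1-(1/2)^{-1}=-1\notin\langle\omega\rangle$ (as $\langle\omega\rangle$ has odd order $q$), so $1/2\in\Gamma'$ and $u(1/2)$ is a solution. For the symmetry, a direct computation gives $u(1-\gamma)_i=u(\gamma)_{-i}$, so $u(\gamma)$ and $u(1-\gamma)$ carry the same multiset of coordinates (reindexed by $i\mapsto -i$); therefore one is a valid solution if and only if the other is, equivalently $\gamma\in\Gamma'$ iff $1-\gamma\in\Gamma'$.
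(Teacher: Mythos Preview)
Your proof is correct and follows essentially the same strategy as the paper: parametrize by $\gamma$, reduce the $\F_p$-rationality question to Lemma~\ref{Lm:ConvexRootSums}, use Lemma~\ref{Lm:Silly} to collapse the $-1$ condition into the $0$ condition, and characterize badness by $1-\gamma^{-1}\in\langle\omega\rangle$. The only tactical variations are that you propagate $\F_p$-membership through the recurrence (the paper instead invokes Lemma~\ref{Lm:ConvexRootSums} separately for each $\omega^i$), you verify $\gamma_k\in\Gamma''$ via the explicit identity $(1-\eta)^{-1}+(1-\eta^{-1})^{-1}=1$ (the paper reverses Lemma~\ref{Lm:ConvexRootSums}), and you obtain the $\gamma\leftrightarrow 1-\gamma$ symmetry from $u(1-\gamma)_i=u(\gamma)_{-i}$ (the paper checks $1-(1-\gamma)^{-1}=\omega^{-j}$ directly); none of these differences is substantive.
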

\begin{proof}
A vector $u$ solves $Au=\lambda_1u$ and satisfies $u_0=1$ if and only if $u=\gamma v_1 + (1-\gamma) v_{-1}$. Every $\omega^i$ with $1\le i\le q-1$ is a primitive $q$th root of unity in $\F_{p^2}$, so Lemma \ref{Lm:ConvexRootSums} shows that $u_i\in \F_p$ for every $i$ if and only if $\gamma\in\Gamma''$.

By Lemma \ref{Lm:Silly}, $u_i=0$ for some $i$ if and only if $u_{2i}=-1$. It therefore suffices to investigate the condition $u_i=0$. Call $\gamma\in \F_{p^2}$ \emph{bad} if $u_i\in\{0,-1\}$ for some $i$. If $\gamma$ is bad then $\gamma\omega^i+(1-\gamma)\omega^{-i}\in \{0,-1\}\subseteq \F_p$ for some $i$, and thus $\gamma\in \Gamma''$ by Lemma \ref{Lm:ConvexRootSums}. Moreover, $\gamma=0$ is not bad (since $\omega$ has odd order).

The following conditions are therefore equivalent: $\gamma$ is bad, $\gamma\omega^i = (\gamma-1)\omega^{-i}$ for some $i$, $\gamma\omega^{2i} = \gamma-1$ for some $i$, $1-\gamma^{-1}=\omega^{2i}$ for some $i$. Thus $\gamma$ is bad if and only if $1-\gamma^{-1}\in\langle\omega\rangle$. The mapping $x\mapsto 1-x^{-1}$ is a bijection $\F_{p^2}\setminus\{0\}\to \F_{p^2}\setminus\{1\}$, hence precisely $q-1$ values of $\gamma\in\Gamma''$ are bad.

We have now established that the solutions are precisely the elements of $\setof{u(\gamma)}{\gamma\in\Gamma'}$. We note that $1/2\in \Gamma''$ is not bad because $1-(1/2)^{-1} = 1-2 = -1\not\in\langle\omega\rangle$. Note that if $\gamma\in\Gamma''$ then $1-\gamma\in\Gamma''$. Also note that if $1-\gamma^{-1} = \omega^j$ for some $j$, then $1-(1-\gamma)^{-1} = \omega^{-j}$. Thus, if $u(\gamma)$ is a solution then $\gamma\in\Gamma''$ implies $1-\gamma\in\Gamma''$, which in turn means that $u(1-\gamma)$ is a solution.
\end{proof}

\begin{lemma}\label{Lm:Mirror}
Let $\gamma$, $\delta\in \F_{p^2}$. Then $u(\gamma)\sim u(\delta)$ if and only if $\gamma=\delta$ or $\gamma = 1-\delta$.
\end{lemma}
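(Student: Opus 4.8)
The plan is to translate the relation $u(\gamma)\sim u(\delta)$ into a single identity among the eigenvectors $v_k$ and then read off the constraints on $\gamma$ and $\delta$ by comparing coordinates in the Vandermonde basis. First I would record the coordinate formula $u(\gamma)_i = \gamma\omega^i + (1-\gamma)\omega^{-i}$, which is just the $i$th entry of $\gamma v_1 + (1-\gamma)v_{-1}$. By the definition of $\sim$ in \eqref{Eq:Equivalence}, the relation $u(\gamma)\sim u(\delta)$ asserts the existence of some $0\ne s\in\Z_q$ with $u(\gamma)_i = u(\delta)_{si}$ for all $i$. Since $s$ is invertible modulo $q$ and $u(\delta)_{si} = \delta\omega^{si} + (1-\delta)\omega^{-si}$ is exactly the $i$th entry of $\delta v_s + (1-\delta)v_{-s}$, the relation becomes the vector identity
\[
    \gamma v_1 + (1-\gamma)v_{-1} = \delta v_s + (1-\delta)v_{-s}.
\]

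Next I would invoke linear independence: the vectors $v_0,v_1,\dots,v_{q-1}$ are the columns of a Vandermonde matrix with the distinct nodes $1,\omega,\dots,\omega^{q-1}$ (distinct because $\omega$ is a primitive $q$th root of unity in characteristic $p\ne q$), so they form a basis of $\F_{p^2}^q$; this is the global counterpart of Lemma \ref{Lm:Eigenvalues}(iv). Consequently the displayed identity forces the two sides to have the same coordinates in this basis, and since $q$ is an odd prime we have $1\ne -1$ and $s\ne -s$ in $\Z_q$, so the support of each side is a genuine two-element (or smaller) subset of $\Z_q$. A three-way split on $s$ then finishes the forward direction: if $s\equiv 1$, matching the $v_1$-coordinates gives $\gamma=\delta$; if $s\equiv -1$, the right side is $(1-\delta)v_1 + \delta v_{-1}$ and matching coordinates gives $\gamma=1-\delta$; and if $s\not\equiv\pm 1$ the index sets $\{1,-1\}$ and $\{s,-s\}$ are disjoint, so the left side would have to vanish, forcing $\gamma=0$ and $1-\gamma=0$ simultaneously, which is impossible. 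The converse is immediate, since $\gamma=\delta$ is witnessed by $s=1$ and $\gamma=1-\delta$ by $s=-1$, using $\gamma v_1 + (1-\gamma)v_{-1} = (1-\delta)v_1 + \delta v_{-1}$.

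The only genuine point requiring care is the basis claim, namely that distinctness of the nodes $\omega^k$ (equivalently $q\ne p$) guarantees that the $v_k$ are linearly independent over $\F_{p^2}$; once this uniqueness of coordinates is in hand, everything else is bookkeeping on the three cases for $s$. I therefore expect no substantive obstacle beyond stating the Vandermonde/linear-independence fact cleanly and handling the degenerate coincidences $1=-1$ and $s=-s$, both of which are ruled out precisely because $q$ is an odd prime.
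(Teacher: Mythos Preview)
Your argument is correct. You rewrite $u(\gamma)\sim u(\delta)$ as the vector identity $\gamma v_1+(1-\gamma)v_{-1}=\delta v_s+(1-\delta)v_{-s}$ and then compare coordinates in the Vandermonde basis $\{v_0,\dots,v_{q-1}\}$; since $q$ is odd, the index sets $\{1,-1\}$ and $\{s,-s\}$ are genuine pairs, and the three cases $s\equiv 1$, $s\equiv -1$, $s\not\equiv\pm 1$ immediately yield the result.

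The paper's proof is different and more hands-on. Instead of invoking linear independence of all the $v_k$, it evaluates the relation $u(\gamma)_i=u(\delta)_{si}$ only at $i=1$ and $i=-1$, then adds the two resulting scalar equations to obtain $\omega+\omega^{-1}=\omega^s+\omega^{-s}$, which is exactly $\lambda_1=\lambda_s$; Lemma~\ref{Lm:Eigenvalues}(iii) then forces $s\equiv\pm 1$, after which $\gamma=\delta$ or $\gamma=1-\delta$ drops out. The paper's route stays strictly within what was already proved (only the pairwise independence of $v_i,v_{q-i}$ from Lemma~\ref{Lm:Eigenvalues}(iv) and the eigenvalue distinctness from part~(iii)), whereas your route uses the stronger---but entirely standard---fact that the full family $\{v_k\}$ is a basis. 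Your approach is arguably cleaner and avoids the clever summation trick; the paper's is more self-contained relative to the lemmas it has set up.
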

\begin{proof}
If $\gamma=\delta$ then obviously $u(\gamma)\sim u(\delta)$. Also, for every $i$ we have $u(\gamma)_i = \gamma\omega^i + (1-\gamma)\omega^{-i} = u(1-\gamma)_{-i}$, so $u(\gamma)\sim u(1-\gamma)$ via $s=-1$.

Suppose that there is $0\ne s\in \Z_q$ such that $u(\gamma)_i = u(\delta)_{si}$ for every $i$. In particular,
\begin{align*}
    \gamma\omega + (1-\gamma)\omega^{-1} &= \delta\omega^s + (1-\delta)\omega^{-s},\\
    \gamma\omega^{-1} + (1-\gamma)\omega &= \delta\omega^{-s}+(1-\delta)\omega^s,
\end{align*}
and summing up these equations yields $\omega+\omega^{-1} = \omega^s + \omega^{-s}$. It follows from Lemma \ref{Lm:Eigenvalues} that $s=\pm 1$. The choice $s=1$ yields $\gamma=\delta$, and the choice $s=-1$ yields $\gamma = 1-\delta$.
\end{proof}

Finally, set
\[
        \Gamma = \left\{\begin{array}{ll}
             \setof{\gamma\in\Gamma'}{1\le \gamma\le (p+1)/2},&\text{if $q$ divides $p-1$},\\
             \setof{\gamma = 1/2+m\sqrt{t}\in\Gamma'}{0\le m\le (p-1)/2},&\text{if $q$ divides $p+1$}.
        \end{array}\right.
\]
Note that $\Gamma$ is a set of representatives of the equivalence classes of $\sim$ on $\Gamma'$.

\begin{lemma}
If $u=u(\gamma)$ for some $\gamma\in\Gamma$, then $u_i^{-1}u_j\ne -1$ for every $i$, $j\in \Z_q$.
\end{lemma}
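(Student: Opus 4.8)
The plan is to reduce the claim to a statement about sums of two coordinates and then exploit a multiplicative factorization. Since $\gamma\in\Gamma\subseteq\Gamma'$, the preceding lemma guarantees that every coordinate $u_i=u(\gamma)_i=\gamma\omega^i+(1-\gamma)\omega^{-i}$ lies in $\F_p\setminus\{0,-1\}$; in particular each $u_i$ is invertible in $\F_p$. Hence $u_i^{-1}u_j=-1$ is equivalent to $u_j=-u_i$, i.e.\ to $u_i+u_j=0$. So it suffices to prove that no two coordinates of $u$ sum to zero.

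First I would introduce a symmetric change of variables. Because $q$ is odd, $2$ is invertible in $\Z_q$, so for any $i,j\in\Z_q$ I can set $a=(i+j)/2$ and $b=(i-j)/2$ in $\Z_q$, giving $i=a+b$ and $j=a-b$. A direct expansion of $u_{a+b}+u_{a-b}$, collecting the $\gamma$ and $(1-\gamma)$ parts and factoring out $\omega^{\pm a}$, yields
\[
    u_i+u_j=(\omega^b+\omega^{-b})\,u_a .
\]

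It then remains to check that neither factor vanishes. The factor $u_a$ is nonzero, again because $\gamma$ is not \emph{bad} in the sense of the previous lemma (indeed $u_a\in\F_p\setminus\{0,-1\}$). For the factor $\omega^b+\omega^{-b}$, note that $\omega^b+\omega^{-b}=0$ would force $\omega^{2b}=-1$; but $\langle\omega\rangle$ has odd order $q$, so it contains no element of order $2$, whence $-1\notin\langle\omega\rangle$. Thus $\omega^b+\omega^{-b}\neq 0$, and therefore $u_i+u_j\neq 0$, completing the argument.

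I expect the only genuinely nonroutine step to be spotting the factorization: the substitution $i=a+b$, $j=a-b$ converts the additive condition $u_i+u_j=0$ into the product $(\omega^b+\omega^{-b})\,u_a$, after which both the nonvanishing of $u_a$ (from the earlier analysis of bad $\gamma$) and the absence of order-$2$ elements in $\langle\omega\rangle$ finish things at once. Everything else is a short computation in $\F_{p^2}$, and the identity handles the degenerate case $i=j$ uniformly, since there $b=0$ and $\omega^0+\omega^0=2\neq 0$.
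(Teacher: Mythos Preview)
Your proof is correct but proceeds differently from the paper's. The paper argues directly with the defining condition of $\Gamma'$: assuming $u_i=-u_j$, it rearranges $\gamma\omega^i+(1-\gamma)\omega^{-i}=-\gamma\omega^j-(1-\gamma)\omega^{-j}$ to solve for $1-\gamma^{-1}$, obtaining
\[
    1-\gamma^{-1}=\frac{\omega^i+\omega^j}{\omega^{-i}+\omega^{-j}}=\omega^{i+j}\in\langle\omega\rangle,
\]
which contradicts $\gamma\in\Gamma'$. Your route instead changes variables to $a=(i+j)/2$, $b=(i-j)/2$ in $\Z_q$ and observes the factorization $u_i+u_j=(\omega^b+\omega^{-b})\,u_a$; the two factors are nonzero because $\langle\omega\rangle$ has odd order and because the preceding lemma already showed $u_a\neq 0$. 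Your argument is a bit cleaner in that it only invokes the \emph{conclusion} of the previous lemma (all coordinates nonzero) rather than re-deriving the condition on $1-\gamma^{-1}$, and the factorization handles the case $i=j$ uniformly. The paper's version, on the other hand, avoids the change of variables and makes the role of the defining inequality $1-\gamma^{-1}\notin\langle\omega\rangle$ explicit.
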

\begin{proof}
Suppose that $u_iu_j^{-1}=-1$ for some $i$, $j$. Then $u_i=-u_j$, so $\gamma\omega^i + (1-\gamma)\omega^{-i} = - \gamma\omega^j - (1-\gamma)\omega^{-j}$. Note that we have $\gamma\ne 0$. Dividing by $\gamma$, we get $\omega^i + (\gamma^{-1}-1)\omega^{-i} = -\omega^j - (\gamma^{-1}-1)\omega^{-j}$. Solving for $1-\gamma^{-1}$, we get
\[
    1-\gamma^{-1} = \frac{\omega^i+\omega^j}{\omega^{-i}+\omega^{-j}} = \omega^{i+j}\in \langle\omega\rangle,
\]
a contradiction.
\end{proof}

We can now finish the proof of Theorem \ref{Th:Main}:

Let $p>q$ be odd primes. Focusing on nonassociative right Bol loops $Q$ of order $pq$, we can assume that $q$ divides $p^2-1$ by Theorem \ref{Th:BolSubloop}. By Lemma \ref{Lm:BasicThetaProperties}, Proposition \ref{Pr:MultiplicationFormula} and Theorem \ref{Th:Linear}, $Q=\mathcal Q(\Theta)$, where every $\theta_i$ is linear, $\theta_0=1$, and $\theta_i^{-1}\theta_j\ne\{0,-1\}$ for every $i$, $j$. By Theorem \ref{Th:NiRoBig}, we instead solve the recurrence relation \eqref{Eq:Recurrence} with period $q$ subject to the conditions $u_0=1$, $u_i^{-1}u_j\in\F_p\setminus\{0,-1\}$, and modulo the equivalence $\sim$.

To obtain period $q$ in the solution, we solve the system $Au = \lambda u$, where $A$ is the $q\times q$ circulant matrix with first row $(0,1,0,\dots,0,1)$. The eigenvalues and eigenvectors are given by Lemma \ref{Lm:Eigenvalues}. The eigenvalue $\lambda_0=2$ has eigenvector $v_0=(1,1,\dots,1)$, which is a valid solution, and this solution cannot be equivalent to any other solution modulo $\sim$ since it is the only solution where all coordinates are the same. It is obvious from \eqref{Eq:MultAbstract} that this solution yields the cyclic group $\Z_{pq}$.

We showed above that up to $\sim$ it suffices to consider solutions for the eigenvalue $\lambda_1$. These solutions are precisely the elements of $S=\setof{u(\gamma)}{\gamma\in\Gamma'}$, we have $u(1/2)\in S$, and if $u(\gamma)\in S$ then $u(1-\gamma)\in S$. The involutory action $u(\gamma)\mapsto u(1-\gamma)$ of Lemma \ref{Lm:Mirror} therefore restricts to $S$. The unique fixed point of the action is $u(1/2)\in S$. The remaining $p-(q-1)-1 = p-q$ points of $S$ are paired up modulo $\sim$.

We have obtained $1 + 1 + (p-q)/2 = (p-q+4)/2$ solutions up to $\sim$.

When $q$ divides $p-1$, the solution $u(1)$ yields a (nonabelian) group by Corollary \ref{Cr:LinearAssociative}. The solution $u(1/2)$ yields a right Bruck loop by Theorem \ref{Th:NiRoBig}. Since we have already accounted for all groups of order $pq$, this right Bruck loop must be nonassociative. All other solutions yield nonassociative non-Bruck right Bol loops, by Theorem \ref{Th:Uniqueness}.

\section{Open problems}

\begin{problem}
Let $p>q$ be odd primes such that $q$ divides $p^2-1$, and let $Q$ be a nonassociative right Bol loop of order $pq$. Is the order of the right multiplication group of $Q$ equal to $p^2q$?
\end{problem}

\begin{problem}
Let $p>q$ be odd primes such that $q$ divides $p^2-1$. Classify right Bol loops of order $pq$ up to isotopism.
\end{problem}

\begin{conjecture}\label{Cj:3p}
Let $p>3$ be a prime. Then the number of right Bol loops of order $3p$ up to isotopism is equal to $\lfloor (p+5)/6\rfloor + 1$.
\end{conjecture}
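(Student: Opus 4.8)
The plan is to run the entire isomorphism-classification machinery one level coarser. By Theorem \ref{Th:Linear} every right Bol loop of order $3p$ has linear complete mappings, so it arises from a period-$3$ solution of the recurrence \eqref{Eq:Recurrence}, and Theorem \ref{Th:NiRoBig} supplies an explicit isotopism criterion. For $q=3$ the circulant matrix $A$ has only the eigenvalues $\lambda_0=2$, which yields the cyclic group $\Z_{3p}$ (this forms its own isotopism class, since a loop isotopic to a group is isomorphic to it, hence associative, whereas no nonconstant $\lambda_1$-sequence is constant), and $\lambda_1=\omega+\omega^{-1}=-1$, whose solutions are the vectors $u(\gamma)=\gamma v_1+(1-\gamma)v_{-1}$ with $\gamma$ ranging over the valid set $\Gamma'$. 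Thus it suffices to count isotopism classes among the $u(\gamma)$ and add $1$.

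The computational heart is to show that the isotopism relation becomes transparent in the coordinate $\zeta=1-\gamma^{-1}$, the very quantity controlling badness ($\gamma$ bad iff $\zeta\in\langle\omega\rangle$). By Theorem \ref{Th:NiRoBig}, $u(\gamma)$ and $u(\delta)$ are isotopic iff $u(\gamma)_i=u(\delta)_r^{-1}u(\delta)_{si+r}$ for some $s\in\{1,2\}$ and $r\in\Z_3$. First I would expand $u(\delta)_i=\delta\omega^i+(1-\delta)\omega^{-i}$ and compute directly: the $s=1$ isotopisms act by $\zeta\mapsto\omega^{-2r}\zeta$, i.e.\ by multiplication by $\langle\omega\rangle$, while the $s=-1$ case composes this with the mirror $\gamma\mapsto1-\gamma$ of Lemma \ref{Lm:Mirror}, which in the new coordinate is $\zeta\mapsto\zeta^{-1}$. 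Hence the isotopism classes of the $u(\gamma)$ are exactly the orbits of $D=\langle\zeta\mapsto\omega\zeta,\ \zeta\mapsto\zeta^{-1}\rangle\cong S_3$ acting on the valid $\zeta$-set. As a consistency check, deleting the multiplication generator leaves $\langle\zeta\mapsto\zeta^{-1}\rangle$, whose orbits recover the $(p-1)/2$ isomorphism classes.

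Next I would pin down the valid $\zeta$-set. When $3\mid p-1$ one has $\Gamma''=\F_p$, and $\gamma\mapsto\zeta$ identifies this with $\F_p\cup\{\infty\}$, so the valid set is $X=(\F_p\cup\{\infty\})\setminus\langle\omega\rangle$ of size $p-2$. When $3\mid p+1$ a short calculation shows that the condition $\gamma\in\Gamma''$ (the line $\mathrm{tr}(\gamma)=1$) corresponds exactly to $N(\zeta)=1$, so the valid set is $Y=\setof{\zeta\in\F_{p^2}}{N(\zeta)=1}\setminus\langle\omega\rangle$, the norm-one cyclic group of order $p+1$ with its order-$3$ subgroup removed, again of size $p-2$. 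Both $X$ and $Y$ are $D$-invariant.

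Finally I would count $D$-orbits by Burnside's lemma, $\frac{1}{6}\sum_{g\in D}|\mathrm{Fix}(g)|$. The identity contributes $p-2$; the two rotations $\zeta\mapsto\omega^{\pm1}\zeta$ contribute $2$ each when $3\mid p-1$ (fixing $0$ and $\infty$) and $0$ each when $3\mid p+1$ (a free action on the cyclic group); and each of the three reflections $\zeta\mapsto\omega^a\zeta^{-1}$ contributes exactly $1$. The main obstacle is this last count, whose fixed points solve $\zeta^2=\omega^a$: the relevant square roots exist because $p\equiv1\pmod 6$ in the first case and $p\equiv5\pmod 6$ in the second, and the delicate point is that of the two roots exactly one is a primitive sixth root of unity lying in the valid set while the other has order $3$ and lies in the excluded $\langle\omega\rangle$ (the case $a=0$ being handled by $-1\notin\langle\omega\rangle$). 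Summing gives $(p+5)/6$ orbits when $3\mid p-1$ and $(p+1)/6$ when $3\mid p+1$, both equal to $\lfloor(p+5)/6\rfloor$. Adding the cyclic group yields $\lfloor(p+5)/6\rfloor+1$, as claimed.
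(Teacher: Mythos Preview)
The paper does not prove this statement at all: it is listed as an open conjecture, supported only by a computer verification for primes $p<1000$ via Theorem~\ref{Th:NiRoBig}. Your proposal, by contrast, outlines what appears to be a complete and correct proof, so it goes strictly beyond the paper.

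The decisive idea you supply---and which the paper lacks---is the substitution $\zeta=1-\gamma^{-1}$. I checked your claimed transformation law: writing $u(\delta)_r^{-1}u(\delta)_{si+r}$ out for $s=\pm 1$ one indeed obtains another $u(\gamma')$ with $\zeta_{\gamma'}=\omega^{-2r}\zeta_\delta$ for $s=1$ and $\zeta_{\gamma'}=\omega^{2r}\zeta_\delta^{-1}$ for $s=-1$, so the isotopism relation of Theorem~\ref{Th:NiRoBig} becomes precisely the action of the dihedral group $D=\langle \zeta\mapsto\omega\zeta,\ \zeta\mapsto\zeta^{-1}\rangle\cong S_3$. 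The identification of the valid $\zeta$-set with $(\F_p\cup\{\infty\})\setminus\langle\omega\rangle$ when $3\mid p-1$, and with the norm-one circle minus $\langle\omega\rangle$ when $3\mid p+1$ (using $\gamma+\gamma^*=1\Leftrightarrow N(\zeta)=1$), is also correct, and both sets are visibly $D$-stable of size $p-2$. Your Burnside count then goes through: the only subtle point is the reflection fixed points, and your observation that of the two square roots of $\omega^a$ exactly one has order $3$ (hence lies in the excluded set $\langle\omega\rangle$) while the other has order $6$ (hence lies in the valid set) is correct, since $6\mid p-1$ resp.\ $6\mid p+1$ guarantees the roots exist and $(-\mu)^3=-\mu^3$ forces the orders to differ. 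The totals $(p+5)/6$ and $(p+1)/6$ in the two cases both equal $\lfloor(p+5)/6\rfloor$, and adding the cyclic group gives the conjectured value.

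In short: the paper offers only numerical evidence, whereas your change of variable converts the isotopy criterion of Niederreiter--Robinson into a transparent $S_3$-action and settles the conjecture by an orbit count. One small point to tighten when you write it up: in the case $3\mid p-1$ you silently extend the $D$-action to $\zeta=\infty$ (corresponding to $\gamma=0$, the nonabelian group); this is harmless projectively but deserves a sentence, and you should note explicitly that the constant sequence $(1,1,1)$ is never of the form $u(\gamma)$ (since $\lambda_1$-solutions satisfy $u_0+u_1+u_2=0\ne 3$ in $\F_p$), so the cyclic group really is a separate class.
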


We have verified Conjecture \ref{Cj:3p} for all primes $p$ less than $1000$ by means of Theorem \ref{Th:NiRoBig}.

\begin{problem}
Let $p>q$ be odd primes. Classify right Bol loops of order $p^2q$.
\end{problem}

\end{document}